\title
[Propagation of anisotropic Gelfand--Shilov wave front sets]
{Propagation of anisotropic Gelfand--Shilov wave front sets}
\author[P. Wahlberg]{Patrik Wahlberg}
\address{Dipartimento di Scienze Matematiche, Politecnico di Torino, Corso Duca degli Abruzzi 24,
10129 Torino, Italy}
\address{ORCID: 0000-0003-4740-0629}
\email{patrik.wahlberg[AT]polito.it}
\numberwithin{equation}{section}          %Detta gr att man f�r
\newtheorem{thm}{Theorem}
\numberwithin{thm}{section}
\newcommand{\rubrik}{}
\newtheorem{prop}[thm]{Proposition}
\newtheorem{lem}[thm]{Lemma}
\theoremstyle{definition}
\newtheorem{defn}[thm]{Definition}
\theoremstyle{remark}
\newtheorem{rem}[thm]{Remark}              %T o m hit r bara allmn
\newcommand{\scal}[2]{\langle #1,#2\rangle}
\newcommand{\ro}{\mathbf R}
\newcommand{\no}{\mathbf N}
\newcommand{\rr}[1]{\mathbf R^{#1}}
\newcommand{\sr}[1]{\mathbf S^{#1}}
\newcommand{\sro}[1]{\mathbf S}
\newcommand{\nn}[1]{\mathbf N^{#1}}
\newcommand{\dd}{\mathrm {d}}
\newcommand{\nm}[2]{\Vert #1\Vert _{#2}}
\newcommand{\ep}{\varepsilon}
\newcommand{\fy}{\varphi}
\newcommand{\cdo}{\, \cdot \, }
\newcommand{\supp}{\operatorname{supp}}
\newcommand{\eabs}[1]{\langle #1\rangle}
\newcommand{\GL}{\operatorname{GL}}
\newcommand{\rB}{\operatorname{B}}
\newcommand{\WF}{\mathrm{WF}}
\newcommand{\cS}{\mathscr{S}}
\newcommand{\cF}{\mathscr{F}}
\newcommand{\cK}{\mathscr{K}}
\newcommand{\J}{\mathcal{J}}
\newcommand{\wt}{\widetilde}
\newcommand{\wh}{\widehat}
\def\la{\langle}
\def\ra{\rangle}
\newcommand{\leqs}{\leqslant}
\newcommand{\geqs}{\geqslant}
\begin{document}

\begin{abstract}
We show a result on propagation of the anisotropic Gelfand--Shilov wave front set
for linear operators with Schwartz kernel which is a Gelfand--Shilov ultradistribution of Beurling type. 
This anisotropic wave front set is parametrized by two positive parameters relating the space and frequency variables. 
The anisotropic Gelfand--Shilov wave front set of the Schwartz kernel of the operator is assumed to satisfy a graph type criterion. 
The result is applied to a class of evolution equations that generalizes the Schr\"odinger equation 
for the free particle. 
The Laplacian is replaced by a partial differential operator 
defined by a symbol which is a polynomial with real coefficients and order at least two. 
\end{abstract}

\keywords{Gelfand--Shilov spaces, ultradistributions, global wave front sets, microlocal analysis, phase space, anisotropy, propagation of singularities, evolution equations}
\subjclass[2010]{46F05, 46F12, 35A27, 47G30, 35A18, 81S30, 58J47, 47D06}

\maketitle

%%%%%%%%%%%%%%%%%%%%
\section{Introduction}\label{sec:intro}
%%%%%%%%%%%%%%%%%%%%

The paper treats the anisotropic Gelfand--Shilov wave front set and its
propagation for a class of continuous linear operators. 

The Gabor wave front set, introduced by H\"ormander in 1991 \cite{Hormander1}, is a closed conic subset of the phase space $T^* \rr d \setminus 0$
that consists of globally singular directions of tempered distributions. 
More precisely it records directions in $T^* \rr d \setminus 0$ in a conical neighborhood of which the short-time Fourier transform of a tempered 
distribution does not decay super-polynomially. 
It is empty precisely when the tempered distribution is a Schwartz function, and thus it records local smoothness as well as rapid decay at infinity comprehensively. 
These singularities thus merits the term global. 

Several recent works \cite{Cordero1,PRW1,Rodino2,Schulz1,Wahlberg1} concern the Gabor wave front set and generalizations.
In particular it has been shown to coincide with Nakamura's homogeneous wave front set \cite{Nakamura1,Schulz1}. 
Concerning propagation of singularities already the original paper \cite{Hormander1} treated the action of a linear continuous operator on the Gabor wave front set. 
In \cite{PRW1,Wahlberg1} propagation of the Gabor wave front set for the solution operator to an evolution equation with quadratic Hamiltonian is studied. Then the singular space, introduced by Hitrik and Pravda--Starov \cite{Hitrik1}, plays a major role. 

In \cite{Carypis1} the Gabor wave front set is adapted to the functional framework of equal index Gelfand--Shilov spaces of Beurling type 
and their dual ultradistribution spaces. This means that the super-polynomial decay for the Gabor wave front set is replaced by super-exponential decay with a subgaussian power parameter $\frac1s < 2$. A study of propagation of this $s$-Gelfand--Shilov wave front set for evolution equations of quadratic type is also contained in \cite{Carypis1}. 

In \cite{Rodino3} the isotropic $s$-Gelfand--Shilov wave front set is generalized into an anisotropic Gelfand--Shilov wave front set 
parametrized by two parameters $t,s > 0$ such that $t + s > 1$.
The parameters relate the space and frequency variables.  
The anisotropic Gelfand--Shilov wave front set is defined for Gelfand--Shilov ultradistributions of Beurling type 
with decay index $t$ and regularity index $s$. 
The super-exponential decay along straight lines in phase space $T^* \rr d \setminus 0$ used for the isotropic Gelfand--Shilov wave front set is then replaced by 
super-exponential decay along curves of the form 
\begin{equation*}
\ro_+ \ni \lambda \mapsto ( \lambda^t x, \lambda^s \xi) \in T^* \rr d \setminus 0
\end{equation*}
where $(x,\xi) \in T^* \rr d \setminus 0$. 
We call the resulting wave front set the anisotropic $t,s$-Gelfand--Shilov wave front set. 
It is denoted $\WF^{t,s} (u) \subseteq T^* \rr d \setminus 0$ for a Gelfand--Shilov ultradistribution $u \in \left( \Sigma_t^s \right)'(\rr d)$. 
If $t = s$ we recapture the $s$-Gelfand--Shilov wave front set. 
In \cite{Rodino3} microlocal analysis for the anisotropic $t,s$-Gelfand--Shilov wave front set is developed. 
In particular a result on microlocality for pseudodifferential operators in the anisotropic framework is shown, 
with a symbol class taken from \cite{Abdeljawad1}.
These operators are continuous on the Gelfand--Shilov space $\Sigma_t^s (\rr d)$ and 
extends to continuous operators on $\left( \Sigma_t^s \right)'(\rr d)$. 

%The anisotropic $t,s$-Gelfand--Shilov wave front set describes satisfactorily the singularities of oscillatory functions of 
%chirp type \cite{Rodino3}. These are exponentials with real polynomial phase functions. 

The following main result in this paper concerns propagation of the anisotropic $t,s$-Gelfand--Shilov wave front set
for a continuous linear operator $\cK: \Sigma_t^s (\rr d) \to\left( \Sigma_t^s \right)'(\rr d)$ defined by a Schwartz kernel $K \in \left( \Sigma_t^s \right)' (\rr {2d})$. 
%It is a Gelfand--Shilov version of \cite[Theorem~4.5]{Wahlberg2}, which treats propagation of anisotropic Gabor wave front sets in the functional framework of the Schwartz space and tempered distributions. 

Suppose that the $t,s$-Gelfand--Shilov wave front set of $K$ contains no point of the form $(x, 0, \xi, 0)  \in T^* \rr {2d} \setminus 0$
nor of the form $(0, y, 0, -\eta) \in T^* \rr {2d} \setminus 0$. 
(Loosely speaking this means that $\WF^{t,s} (K)$ resembles the graph of an invertible matrix.)
Then $\cK: \Sigma_t^s(\rr d) \to \Sigma_t^s(\rr d)$ is continuous, extends uniquely to a continuous linear operator $\cK:  \left( \Sigma_t^s \right)' (\rr d) \to  \left( \Sigma_t^s \right)' (\rr d)$, 
and for $u \in  \left( \Sigma_t^s \right)' (\rr d)$ we have
\begin{equation}\label{eq:linearpropagation}
\WF^{t,s} (\cK u) \subseteq \WF^{t,s} (K)' \circ \WF^{t,s} (u)  
\end{equation}
where
\begin{equation*}
A' \circ B  = \{ (x,\xi) \in \rr {2d}: \,  \exists (y,\eta) \in B: \, (x,y,\xi,-\eta) \in A \}
\end{equation*}
for $A \subseteq T^* \rr {2d}$ and $B \subseteq T^* \rr d$. 

The inclusion \eqref{eq:linearpropagation} is conceptually similar to propagation results for other 
types of wave front sets, local \cite{Hormander0}, or global \cite{Carypis1,PRW1,Wahlberg1}. 

As an application of the inclusion \eqref{eq:linearpropagation} we study propagation of the anisotropic $t,s$-Gelfand wave front set
for the initial value Cauchy problem for an evolution equation of the form 
\begin{equation*}
\left\{
\begin{array}{rl}
\partial_t u(t,x) + i p(D_x) u (t,x) & = 0, \quad x \in \rr d, \\
u(0,\cdot) & = u_0 
\end{array}
\right.
\end{equation*}
where $p: \rr d \to \ro$ is a polynomial with real coefficients of order $m \geqs 2$. 
This generalizes the Schr\"odinger equation for the free particle where $m = 2$ and $p(\xi) = |\xi|^2$. 

Provided $s > \frac{1}{m-1}$ we show that $\WF^{s(m-1),s}$ of the solution $e^{- i t p(D_x)} u_0$ at time $t \in \ro$ equals $\WF^{s(m-1),s} (u_0)$ 
transported by the Hamilton flow $\chi_t $ with respect to the principal part $p_m $ of $p$, that is 
\begin{equation*}
( x (t),\xi (t) ) = \chi_t (x_0, \xi_0)
= (x_0 + t \nabla p_m (\xi_0), \xi_0), \quad t \in \ro, \quad (x_0, \xi_0 ) \in T^* \rr d \setminus 0. 
\end{equation*}

This conclusion is again conceptually similar to other results on propagation of singularities 
\cite{Hormander0,Carypis1,Wahlberg1}, and generalizes known results when $p$ is a homogeneous 
quadratic form \cite{PRW1}. 

We also show that the propagator $e^{- i t p(D_x)}$ for any $t \in \ro$ is continuous on $\Sigma_{r}^s(\rr d)$ for any $r,s > 0$ such that $r \geqs s(m-1) > 1$,
using the criterion mentioned above on the $r,s$-Gelfand--Shilov wave front set of
the Schwartz kernel of the propagator. 
This technique to prove continuity on Gelfand--Shilov spaces avoids direct estimates for seminorms, and
we hope it may be useful in other contexts. 

Several ideas and techniques for our works on anisotropic global microlocal analysis are borrowed from the literature on anisotropic local microlocal analysis (see e.g. \cite{Parenti1}). 
In these works the anisotropy refers mostly to the dual (frequency) variables only, for fixed space variables, 
whereas our anisotropy refers to the space and frequency variables comprehensively. 

The article is organized as follows. 
Notations and definitions are collected in Section \ref{sec:prelim}. 
Section \ref{sec:seminorms} treats a family of seminorms for Gelfand--Shilov spaces defined 
using the short-time Fourier transform. 
Section \ref{sec:anisotropicGSWF} recalls the definition of 
the anisotropic $t,s$-Gelfand--Shilov wave front set and a result on tensorization is proved.
We devote Section \ref{sec:propagation} to a proof of the main result on propagation of the 
anisotropic $t,s$-Gelfand--Shilov wave front set for linear operators. 
In Section \ref{sec:chirp} we generalize \cite[Theorem~4.2~(i)]{Rodino3} and find an
inclusion for anisotropic Gelfand--Shilov wave front sets of multivariable chirp functions. 
These are exponentials with real polynomial phase functions.
Finally Section \ref{sec:schrodinger} treats an application of our propagation result to a class of evolution equations of Schr\"odinger type.

%%%%%%%%%%%%%%%%%%%%
\section{Preliminaries}\label{sec:prelim}
%%%%%%%%%%%%%%%%%%%%

The unit sphere in $\rr d$ is denoted $\sr {d-1} \subseteq \rr d$. 
A ball of radius $r > 0$ centered in $x \in \rr d$ is denoted $\rB_r (x)$, $\rB_r(0) = \rB_r$, 
and $e_j \in \rr d$ is the vector of zeros except for position $j$, $1 \leqs j \leqs d$, where it is one. 
The transpose of a matrix $A \in \rr {d \times d}$ is denoted $A^T$ and the inverse transpose 
of $A \in \GL(d,\ro)$ is $A^{-T}$. 
We write $f (x) \lesssim g (x)$ provided there exists $C>0$ such that $f (x) \leqs C \, g(x)$ for all $x$ in the domain of $f$ and of $g$. 
If $f (x) \lesssim g (x) \lesssim f(x)$ then we write $f \asymp g$. 
We use the bracket $\eabs{x} = (1 + |x|^2)^{\frac12}$ for $x \in \rr d$. 
Peetre's inequality with optimal constant \cite[Lemma~2.1]{Rodino3} is
\begin{equation*}
\eabs{x+y}^s \leqs \left( \frac{2}{\sqrt{3}} \right)^{|s|} \eabs{x}^s\eabs{y}^{|s|}\qquad x,y \in \rr d, \quad s \in \ro. 
\end{equation*}
The normalization of the Fourier transform is
\begin{equation*}
 \cF f (\xi )= \widehat f(\xi ) = (2\pi )^{-\frac d2} \int _{\rr
{d}} f(x)e^{-i\scal  x\xi }\, \dd x, \qquad \xi \in \rr d, 
\end{equation*}
for $f\in \cS(\rr d)$ (the Schwartz space), where $\scal \cdo \cdo$ denotes the scalar product on $\rr d$. 
The conjugate linear action of a (ultra-)distribution $u$ on a test function $\phi$ is written $(u,\phi)$, consistent with the $L^2$ inner product $(\cdo ,\cdo ) = (\cdo ,\cdo )_{L^2}$ which is conjugate linear in the second argument. 

Denote translation by $T_x f(y) = f( y-x )$ and modulation by $M_\xi f(y) = e^{i \scal y \xi} f(y)$ 
for $x,y,\xi \in \rr d$ where $f$ is a function or distribution defined on $\rr d$. 
The composed operator is denoted $\Pi(x,\xi) = M_\xi T_x$. 
Let $\fy \in \cS(\rr d) \setminus \{0\}$. 
The short-time Fourier transform (STFT) of a tempered distribution $u \in \cS'(\rr d)$ is defined by 
\begin{equation*}
V_\fy u (x,\xi) = (2\pi )^{-\frac d2} (u, M_\xi T_x \fy) = \cF (u T_x \overline \fy)(\xi), \quad x,\xi \in \rr d. 
\end{equation*}
Then $V_\fy u$ is smooth and polynomially bounded \cite[Theorem~11.2.3]{Grochenig1}, that is 
there exists $k \geqs 0$ such that 
\begin{equation}\label{eq:STFTtempered}
|V_\fy u (x,\xi)| \lesssim \eabs{(x,\xi)}^{k}, \quad (x,\xi) \in T^* \rr d.  
\end{equation}
We have $u \in \cS(\rr d)$ if and only if
\begin{equation}\label{eq:STFTschwartz}
|V_\fy u (x,\xi)| \lesssim \eabs{(x,\xi)}^{-N}, \quad (x,\xi) \in T^* \rr d, \quad \forall N \geqs 0.  
\end{equation}

The inverse transform is given by
\begin{equation}\label{eq:STFTinverse}
u = (2\pi )^{-\frac d2} \iint_{\rr {2d}} V_\fy u (x,\xi) M_\xi T_x \fy \, \dd x \, \dd \xi
\end{equation}
provided $\| \fy \|_{L^2} = 1$, with action under the integral understood, that is 
\begin{equation}\label{eq:moyal}
(u, f) = (V_\fy u, V_\fy f)_{L^2(\rr {2d})}
\end{equation}
for $u \in \cS'(\rr d)$ and $f \in \cS(\rr d)$, cf. \cite[Theorem~11.2.5]{Grochenig1}.

%%%%%%%%%%%%%%%%%%%%%%%%%%%%%
\subsection{Spaces of functions and ultradistributions}
%%%%%%%%%%%%%%%%%%%%%%%%%%%%%

In this paper we work with Beurling type Gelfand--Shilov spaces and their dual ultradistribution spaces \cite{Gelfand2}. 

Let $s,t, h > 0$. 
The space denoted $\mathcal S_{t,h}^s(\rr d)$
is the set of all $f\in C^\infty (\rr d)$ such that
\begin{equation}\label{eq:seminormSigmats1}
\nm f{\mathcal S_{t,h}^s}\equiv \sup \frac {|x^\alpha D ^\beta
f(x)|}{h^{|\alpha + \beta |} \alpha !^t \, \beta !^s}
\end{equation}
is finite, where the supremum is taken over all $\alpha ,\beta \in
\mathbf N^d$ and $x\in \rr d$.
The function space $\mathcal S_{t,h}^s$ is a Banach space which increases
with $h$, $s$ and $t$, and $\mathcal S_{t,h}^s \subseteq \cS$.
The topological dual $(\mathcal S_{t,h}^s)'(\rr d)$ is
a Banach space such that $\cS'(\rr d) \subseteq (\mathcal S_{t,h}^s)'(\rr d)$.

The Beurling type \emph{Gelfand--Shilov space}
$\Sigma _t^s(\rr d)$ is the projective limit 
of $\mathcal S_{t,h}^s(\rr d)$ with respect to $h$ \cite{Gelfand2}. This means
\begin{equation}\label{GSspacecond1}
\Sigma _t^s(\rr d) = \bigcap _{h>0} \mathcal S_{t,h}^s(\rr d)
\end{equation}
and the Fr{\'e}chet space topology of $\Sigma _t^s (\rr d)$ is defined by the seminorms $\nm \cdot{\mathcal S_{t,h}^s}$ for $h>0$.
 
We have $\Sigma _t^s(\rr d)\neq \{ 0\}$ if and only if $s + t > 1$ \cite{Petersson1}. 
The topological dual of $\Sigma _t^s(\rr d)$ is the space of (Beurling type) \emph{Gelfand--Shilov ultradistributions} \cite[Section~I.4.3]{Gelfand2}
\begin{equation}\tag*{(\ref{GSspacecond1})$'$}
(\Sigma _t^s)'(\rr d) =\bigcup _{h>0} (\mathcal S_{t,h}^s)'(\rr d).
\end{equation}

The dual space $(\Sigma _t^s)'(\rr d)$ may be equipped with several topologies: the weak$^*$ topology, the strong topology, the Mackey topology, and the topology defined by the union \eqref{GSspacecond1}$'$ as an inductive limit topology \cite{Schaefer1}. The latter topology is the strongest such that the inclusion $(\mathcal S_{t,h}^s)'(\rr d) \subseteq (\Sigma _t^s)'(\rr d)$ is continuous for all $h > 0$.  
We use the weak$^*$ topology on $(\Sigma _t^s)'(\rr d)$ in this paper. 

The Roumieu type Gelfand--Shilov space is the union 
\begin{equation*}
\mathcal S_t^s(\rr d) = \bigcup _{h>0}\mathcal S_{t,h}^s(\rr d)
\end{equation*}
equipped with the inductive limit topology \cite{Schaefer1}, that is 
the strongest topology such that each inclusion $\mathcal S_{t,h}^s(\rr d) \subseteq\mathcal S_t^s(\rr d)$
is continuous. 
Then $\mathcal S _t^s(\rr d)\neq \{ 0\}$ if and only if $s+t \geqs 1$ \cite{Gelfand2}. 
The corresponding (Roumieu type) Gelfand--Shilov ultradistribution space is 
\begin{equation*}
(\mathcal S_t^s)'(\rr d) = \bigcap _{h>0} (\mathcal S_{t,h}^s)'(\rr d). 
\end{equation*}
For every $s,t > 0$ such that $s+t > 1$, and for any $\ep > 0$ we have
\begin{equation*}
\Sigma _t^s (\rr d)\subseteq \mathcal S_t^s(\rr d)\subseteq
\Sigma _{t+\ep}^{s+\ep}(\rr d).
\end{equation*}
We will not use the Roumieu type spaces in this article but mention them as a service to a reader interested in a wider context. 

We write $\Sigma _s^s (\rr d) = \Sigma_s (\rr d)$ and $(\Sigma _s^s)' (\rr d) = \Sigma_s' (\rr d)$. 
Then $\Sigma_s(\rr d) \neq \{ 0 \}$ if and only if $s > \frac12$.

The Gelfand--Shilov (ultradistribution) spaces enjoy invariance properties, with respect to 
translation, dilation, tensorization, coordinate transformation and (partial) Fourier transformation.
The Fourier transform extends 
uniquely to homeomorphisms on $\mathscr S'(\rr d)$, from $(\mathcal
S_t^s)'(\rr d)$ to $(\mathcal
S_s^t)'(\rr d)$, and from $(\Sigma _t^s)'(\rr d)$ to $(\Sigma _s^t)'(\rr d)$, and restricts to 
homeomorphisms on $\mathscr S(\rr d)$, from $\mathcal S_t^s(\rr d)$ to $\mathcal S_s^t(\rr d)$, 
and from $\Sigma _t^s(\rr d)$ to $\Sigma _s^t(\rr d)$, and to a unitary operator on $L^2(\rr d)$.

Let $u \in (\Sigma_t^s)' (\rr d)$ with $s + t > 1$. 
If $\psi \in \Sigma_t^s (\rr d) \setminus 0$ then 
\begin{equation}\label{eq:STFTGFstdistr}
| V_\psi u (x,\xi)| \lesssim e^{r (|x|^{\frac1t} + |\xi|^{\frac1s})}
\end{equation}
for some $r > 0$, and $u \in \Sigma_t^s (\rr d)$ if and only if 
\begin{equation}\label{eq:STFTGFstfunc}
| V_\psi u (x,\xi)| \lesssim e^{-r (|x|^{\frac1t} + |\xi|^{\frac1s})}
\end{equation}
for all $r > 0$. See e.g. \cite[Theorems~2.4 and 2.5]{Toft1}.
If $u \in (\Sigma_t^s)'(\rr d)$, $f \in \Sigma_t^s(\rr d)$, $\fy \in \Sigma_t^s(\rr d)$ and $\| \fy \|_{L^2} = 1$
then \eqref{eq:moyal} holds true.

Working with Gelfand--Shilov spaces we will often use the inequality (cf. \cite{Abdeljawad1})
\begin{equation*}
|x+y|^{\frac1s} \leqs \kappa(s^{-1} ) ( |x|^{\frac1s} + |y|^{\frac1s}), \quad x,y \in \rr d, \quad s > 0, 
\end{equation*}
where 
\begin{equation*}
\kappa (t)
= 
\left\{
\begin{array}{ll}
1 & \mbox{if} \quad 0 < t \leqs 1 \\
2^{t-1} & \mbox{if} \quad t > 1
\end{array}
\right. ,
\end{equation*}
which implies 
\begin{equation}\label{eq:exppeetre}
\begin{aligned}
e^{r |x+y|^{\frac1s} } & \leqslant e^{ \kappa(s^{-1} ) r |x|^{\frac1s}} e^{ \kappa(s^{-1} )r |y|^{\frac1s}},  \quad x,y \in \rr d, \quad r >0, \\
e^{- r \kappa(s^{-1} ) |x+y|^{\frac1s} } & \leqslant e^{- r |x|^{\frac1s}} e^{ \kappa(s^{-1} ) r |y|^{\frac1s}}, \quad x,y \in \rr d, \quad r >0. 
\end{aligned}
\end{equation}

We will use the following estimate based on $|\alpha|! \leqs \alpha! d^{|\alpha|}$ for $\alpha \in \nn d$ \cite[Eq.~(0.3.3)]{Nicola1}. For any $s > 0$, $h > 0$ and any $\alpha \in \nn d$ we have 
\begin{equation}\label{eq:expestimate0}
\alpha!^{-s} h^{- |\alpha|} 
= \left( \frac{h^{- \frac{|\alpha|}{s}}}{\alpha!} \right)^s
\leqs  \left( \frac{ \left( d h^{- \frac{1}{s}} \right)^{|\alpha|}}{|\alpha|!} \right)^s
\leqs e^{s d h^{- \frac{1}{s}}}. 
\end{equation}
%

%%%%%%%%%%%%%%%%%%%%%%%%%%%%%%%%%%%%%%%%%
\section{Seminorms on Beurling type Gelfand--Shilov spaces}\label{sec:seminorms}
%%%%%%%%%%%%%%%%%%%%%%%%%%%%%%%%%%%%%%%%%

We need the following result on seminorms in the space $\Sigma_t^s(\rr d)$ when $t + s > 1$. 
The result appears implicitly in the literature (cf. \cite[Theorem~2.4]{Toft1}) but we give a detailed proof as a service to the reader. 

\begin{lem}\label{lem:Sigmatsseminorm}
Let $t,s > 0$ satisfy $t + s > 1$, 
and let $\fy \in \Sigma_t^s(\rr d) \setminus 0$. 
The collection of seminorms
\begin{equation}\label{eq:seminormsSigmats2}
\Sigma_t^s(\rr d) \ni f \mapsto \sup_{(x,\xi) \in \rr {2d}} e^{r \left( |x|^{\frac1t} + |\xi|^{\frac1s} \right) } |V_\fy f (x,\xi)|, \quad r > 0,
\end{equation}
defines the same topology on $\Sigma_t^s(\rr d)$ as does the collection
of seminorms \eqref{eq:seminormSigmats1} for $h > 0$. 
\end{lem}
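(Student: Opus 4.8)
The plan is to prove equivalence of the two families of seminorms by establishing two continuous inclusions: first that every STFT-seminorm in \eqref{eq:seminormsSigmats2} is bounded by a finite combination of the derivative-seminorms \eqref{eq:seminormSigmats1}, and conversely that each derivative-seminorm is bounded by an STFT-seminorm. Since both families define Fr\'echet topologies on $\Sigma_t^s(\rr d)$ and the space is barrelled, it suffices to prove that the identity map is continuous in both directions, which by the seminorm characterization of continuity reduces to the two families of estimates just described.

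For the first direction, I would start from the integral-free expression $V_\fy f(x,\xi) = (2\pi)^{-d/2}(f, M_\xi T_x \fy)$. Writing this as $\cF(f\, T_x\overline\fy)(\xi)$, the decay in $\xi$ comes from differentiating under the Fourier transform: $\xi^\beta V_\fy f(x,\xi)$ is, up to constants, the Fourier transform of $D^\beta(f\, T_x\overline\fy)$, so by the Leibniz rule and the trivial bound $|\cF g(\xi)| \le (2\pi)^{-d/2}\|g\|_{L^1}$ one gets $|\xi^\beta V_\fy f(x,\xi)|$ controlled by sums of $L^1$ norms of $D^{\beta'}f \cdot D^{\beta''}\overline\fy(\cdot - x)$. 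Pulling out the $x$-decay requires inserting a weight: one estimates $\eabs{y}^t$-type factors, or more directly uses that $f \in \Sigma_t^s$ gives $|x^\alpha D^{\beta'} f(x)| \le C h^{|\alpha+\beta'|}\alpha!^t\beta'!^s$ and that $\fy$ decays super-exponentially, so the $L^1$ integral in $y$ converges and produces a factor controlling $|x|^\alpha$. Then one sums the resulting series over $\alpha,\beta$ using the elementary fact that $\sup_\alpha |x|^{|\alpha|}/(\alpha!^t h^{|\alpha|}) \asymp e^{c|x|^{1/t}}$ (and similarly in $\xi$ with exponent $s$), converting polynomial-type control into the exponential weights $e^{r(|x|^{1/t}+|\xi|^{1/s})}$. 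The inequalities \eqref{eq:exppeetre} handle the shift by $x$ in the argument of $\fy$, and \eqref{eq:expestimate0} is the tool for summing over multi-indices; any $h$ can be reached at the cost of a larger $r$, and conversely any $r$ by choosing $h$ small.

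For the converse direction I would use the inversion formula \eqref{eq:STFTinverse}, valid when $\|\fy\|_{L^2}=1$ (normalizing $\fy$ is harmless). Applying $x^\alpha D^\beta$ to the representation $f = (2\pi)^{-d/2}\iint V_\fy f(x,\xi) M_\xi T_x \fy\, dx\, d\xi$ and moving the differential operators under the integral, one must differentiate $M_\xi T_x\fy$ with respect to the free variable; this produces polynomial factors in $\xi$ (from $M_\xi$) times shifted derivatives of $\fy$, and the factor $x^\alpha$ is absorbed using $\eabs{x-y+y}$-type splitting together with the super-exponential decay of $\fy$ and its derivatives. The STFT-seminorm supplies a factor $e^{-r(|x|^{1/t}+|\xi|^{1/s})}$ inside the integral, which both guarantees absolute convergence and, after bounding $|x^\alpha D^\beta(M_\xi T_x\fy)(z)|$ by something like $C_h h^{|\alpha+\beta|}\alpha!^t\beta!^s e^{c(|x|^{1/t}+|\xi|^{1/s})}$ with $c < r$, leaves an integrable remainder; taking the supremum over $z$ and then over $\alpha,\beta$ gives the required bound on $\|f\|_{\mathcal S_{t,h}^s}$ in terms of a single seminorm \eqref{eq:seminormsSigmats2} with $r$ large enough.

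The main obstacle is bookkeeping in the first (forward) direction: cleanly converting the two-sided polynomial bounds $|x^\alpha D^\beta f| \lesssim h^{|\alpha+\beta|}\alpha!^t\beta!^s$ into the exponential weights, with the $t$-weight attached to $x$ and the $s$-weight to $\xi$, while the Leibniz expansion of $D^\beta(f\,T_x\overline\fy)$ mixes derivatives between $f$ and $\fy$ and the $L^1$-integration in the translation variable introduces a shift that must be neutralized via \eqref{eq:exppeetre}. Keeping the anisotropy straight — ensuring the frequency weight genuinely carries exponent $1/s$ and the spatial weight exponent $1/t$ after all the estimates — and verifying that the constants depend on the seminorm parameters in the right monotone way (larger $h$ or smaller $r$ being the ``easy'' directions) is where the care is needed; the rest is standard Gelfand--Shilov manipulation using \eqref{eq:expestimate0} and the series-to-exponential conversion.
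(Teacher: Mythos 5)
Your proposal is correct in outline, but it follows a genuinely different route from the paper's proof. The paper never estimates $x^\alpha \xi^\beta V_\fy f$ directly: it introduces the intermediate seminorms $\| f \|_{t,r}' = \sup_x e^{r|x|^{1/t}}|f(x)|$ and $\| \wh f \|_{s,r}'$ and proves a chain of four equivalences, the hard step being \eqref{eq:GSorig2ffhat}, which recovers the derivative seminorms \eqref{eq:seminormSigmats1} from the pointwise decay of $f$ and $\wh f$ alone via Parseval, integration by parts and a combinatorial argument in the style of \cite[Theorem~6.1.6]{Nicola1}; it is precisely there that the hypothesis $t+s>1$ enters, through the exponent $\delta = t+s-1 > 0$. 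The STFT is then handled by comparatively soft convolution and inversion-formula estimates against these intermediate norms. You instead work on the STFT directly in both directions: in the forward direction you bound $x^\alpha \xi^\beta V_\fy f$ by Leibniz expansion of $\cF(f\,T_x\overline\fy)$ and the seminorms of $f$ and $\fy$, and in the converse direction you apply $x^\alpha D^\beta$ under the inversion integral \eqref{eq:STFTinverse}, splitting $x^\alpha$ across the translation and absorbing the polynomial growth in the integration variables into the weight $e^{-r(|x|^{1/t}+|\xi|^{1/s})}$ for $r$ large depending on $h$. This is legitimate: since both $f$ and $\fy$ lie in $\Sigma_t^s(\rr d)$, every auxiliary parameter ($h_1$ for the window, $h'$ for $f$, $c$ in the conversion $|x|^k \leqs k!^t c^{-tk} e^{tc|x|^{1/t}}$) can be pushed in the favourable direction, and the elementary inequality $\binom{\alpha}{\mu}\mu!^t(\alpha-\mu)!^t \leqs 2^{|\alpha|}\alpha!^t$ keeps the factorial weights attached to the correct variables, so the quantifier structure ``for every $r$ there is $h$'' and ``for every $h$ there is $r$'' comes out right. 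What your route buys is that it bypasses the heaviest part of the paper's proof (the Fourier-side $L^2$ argument, and with it any visible use of $t+s>1$ beyond nontriviality of the space and existence of a window); what it costs is substantially more multi-index bookkeeping inside the STFT estimates, which you have only sketched. Two small points to tighten if you write it out: justify differentiation under the integral in the converse direction by first invoking the super-exponential decay of $V_\fy f$ (available from \eqref{eq:STFTGFstfunc} or from your forward direction), and note that the barrelledness remark is superfluous, since mutual domination of the two seminorm families is exactly the statement that the two topologies coincide.
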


\begin{proof}
Due to the continuity of the Fourier transform $\cF: \Sigma_t^s (\rr d) \to \Sigma_s^t (\rr d)$ we have: 
For every $h_1 > 0$ there exists $h_2 > 0$ such that for $f \in \Sigma_t^s(\rr d)$
\begin{equation}\label{eq:GSFourier}
\nm {\wh f}  {\mathcal S_{s,h_1}^t}
\lesssim 
\nm f  {\mathcal S_{t,h_2}^s}. 
\end{equation}

Set for $r > 0$
\begin{equation*}
\| f \|_{t,r}' = \sup_{x \in \rr d} e^{r |x|^{\frac1t} } |f (x)|
\end{equation*}
and for $\fy \in \Sigma_t^s(\rr d) \setminus 0$
\begin{equation*}
\| f \|_r'' = \sup_{(x,\xi) \in \rr {2d}} e^{r \left( |x|^{\frac1t} + |\xi|^{\frac1s} \right) } |V_\fy f (x,\xi)|. 
\end{equation*}

We start by showing
\begin{equation}\label{eq:GSffhat2orig}
\forall r > 0 \ \exists h > 0: \ 
\| f \|_{t,r}' + \| \wh f \|_{s,r}' \lesssim \nm f  {\mathcal S_{t,h}^s}, \quad f \in \Sigma_t^s(\rr d). 
\end{equation}

Using 
\begin{equation*}
|x|^n \leqs d ^{\frac{n}{2}} \max_{|\alpha| = n} |x^\alpha|, \quad x \in \rr d, 
\end{equation*}
we obtain for $f \in \Sigma_t^s(\rr d)$, for any $r > 0$ and any $h > 0$,
\begin{align*}
e^{\frac{r}{t} |x|^{\frac{1}{t}}} \left| f(x) \right|^{\frac{1}{t}}
& = 
\sum_{n=0}^{\infty} 2^{-n} n!^{-1} \left( \frac{2 r}{t} |x|^{\frac{1}{t}} \right)^n 
\left| f(x) \right|^{\frac{1}{t}} \\
& \leqs 
2 \left( \sup_{n \geqs 0}  n!^{-t} \left( \left( \frac{2 r}{ t} \right)^t |x| \right)^n  \left| f (x) \right| \right)^{\frac{1}{t}} \\
& \lesssim 
\left( \sup_{n \geqs 0}  \left( \left( \frac{2 r}{t} \right)^t d^{\frac{1}{2}} \right)^n \max_{|\alpha| = n} 
\frac{\left| x^\alpha f(x) \right|}{n!^t} \right)^{\frac{1}{t}} \\
& \leqs
\nm f  {\mathcal S_{t,h}^s}^{\frac1t} 
\left( \sup_{n \geqs 0}  \left( \left( \frac{2 r}{t} \right)^t d^{\frac{1}{2}} h \right)^n \right)^{\frac{1}{t}} \\
& \lesssim
\nm f  {\mathcal S_{t,h}^s}^{\frac1t}, \quad x \in \rr d, 
\end{align*}
provided $h = h(r,t,d) > 0$ is sufficiently small. 
This shows 
\begin{equation*}
\forall r > 0 \ \exists h > 0: \ 
\| f \|_{t,r}' \lesssim \nm f  {\mathcal S_{t,h}^s}. 
\end{equation*}

As a byproduct, since $\wh f \in \Sigma_s^t(\rr d)$, this gives using \eqref{eq:GSFourier}
the following conclusion. 
If $f \in \Sigma_t^s(\rr d)$ and $r > 0$ then there exist $h_1, h_2 > 0$ such that 
\begin{align*}
\| f \|_{t,r}' + \| \wh f \|_{s,r}' \lesssim \nm f  {\mathcal S_{t,h_1}^s} + \nm {\wh f}  {\mathcal S_{s,h_1}^t}
\lesssim \nm f  {\mathcal S_{t,h_2}^s}. 
\end{align*}
We have proved \eqref{eq:GSffhat2orig}. 

Next we show the opposite estimate, that is
\begin{equation}\label{eq:GSorig2ffhat}
\forall h > 0 \ \exists r > 0: \ \nm f  {\mathcal S_{t,h}^s}
\lesssim \| f \|_{t,r}' + \| \wh f \|_{s,r}', \quad f \in \Sigma_t^s(\rr d). 
\end{equation}
The argument is quite long. 
It resembles the proof of \cite[Theorem~6.1.6]{Nicola1}. For completeness' sake we give the details. 

First we deduce two estimates that are needed. From \eqref{eq:GSffhat2orig} it follows that $\| f \|_{t,r}' < \infty$ and 
$\| \wh f \|_{s,r}' < \infty$ for any $r > 0$ when $f \in \Sigma_t^s(\rr d)$. 
Thus for any $r > 0$ we have 
\begin{align*}
\sum_{n=0}^\infty \frac{|x|^{\frac{n}{t}} |f(x)|^{\frac1t}}{n!} \left(\frac{r}{t} \right)^n
& = e^{ \frac{r}{t} |x|^{\frac1t} } |f(x)|^{\frac1t} \leqslant (\| f \|_{t,r}')^{\frac1t}, \quad x \in \rr d, 
\end{align*}
which gives the estimate
\begin{equation*}
|x|^{n} |f(x)| 
\leqs \| f \|_{t,r}' (n!)^t \left(\frac{t}{r} \right)^{t n}, \quad x \in \rr d, \quad n \in \no, 
\end{equation*}
and further
\begin{equation*}
|x^\alpha f(x)| 
\leqs \| f \|_{t,r}' (\alpha!)^t \left( \frac{d t}{r}\right)^{t|\alpha|}, \quad x \in \rr d, \quad \alpha \in \nn d. 
\end{equation*}

Finally we take the $L^2$ norm and estimate for an integer $k > d/4$ with $\ep = 4k - d > 0$:
\begin{equation}\label{eq:L2est1}
\begin{aligned}
\| x^\alpha f \|_{L^2} 
& \lesssim \sup_{x \in \rr d} \eabs{x}^{\frac{d+\ep}{2}} |x^\alpha f(x)|
\lesssim \sup_{x \in \rr d, \ |\gamma| \leqslant 2 k} |x^{\alpha+\gamma} f(x)| \\
& \leqslant \| f \|_{t,r}' ((\alpha+\gamma)!)^t  \left( \frac{d t}{r}\right)^{t |\alpha+\gamma|} \\
& \lesssim \| f \|_{t,r}' \alpha!^t \left( \frac{2 d t}{r}\right)^{t |\alpha|}, \quad \alpha \in \nn d, 
\end{aligned}
\end{equation}
using $(\alpha+\gamma)! \leqslant 2^{|\alpha+\gamma|} \alpha! \gamma!$ (cf. \cite{Nicola1}) and considering $k$ a fixed parameter. 

From \eqref{eq:L2est1}, $\| \wh f \|_{s,r}' < \infty$ for any $r > 0$, and Parseval's theorem we obtain
\begin{equation}\label{eq:L2est2}
\| D^\beta f \|_{L^2} 
= \| \xi^\beta \wh f \|_{L^2} 
\lesssim \| \wh f \|_{s,r}' \beta!^s \left( \frac{2 d s}{r}\right)^{s|\beta|}, \quad \beta \in \nn d. 
\end{equation}

We now start to prove \eqref{eq:GSorig2ffhat}.  
It suffices to assume $0 < h \leqs1 $. 
We have for $\alpha,\beta \in \nn d$ arbitrary and $f \in \Sigma_t^s(\rr d)$, 
using the Cauchy--Schwarz inequality, Parseval's theorem and the Leibniz rule
\begin{equation}\label{eq:intermediateestimate1}
\begin{aligned}
|x^\alpha D^\beta f(x)| 
& = (2\pi)^{-\frac{d}{2}} \left| \int_{\rr d} \widehat{x^\alpha D^\beta f} (\xi) e^{i \la x, \xi \ra} \dd \xi \right|
\lesssim \| \eabs{\cdot}^{\frac{d+\ep}{2}} \widehat{x^\alpha D^\beta f} \|_{L^2} \\
& \lesssim \max_{|\gamma| \leqslant 2k} \| D^\gamma (x^\alpha D^\beta f) \|_{L^2} \\
& \lesssim \max_{|\gamma| \leqslant 2k} \sum_{\mu \leqslant \min(\alpha,\gamma) } \binom{\gamma}{\mu} \binom{\alpha}{\mu} \mu! \| x^{\alpha-\mu} D^{\beta + \gamma-\mu} f \|_{L^2}, \quad x \in \rr d. 
\end{aligned}
\end{equation}

In the next intermediate step we rewrite the expression for the $L^2$ norm squared using integration by parts and estimate it as
\begin{align*}
& \| x^{\alpha-\mu} D^{\beta + \gamma-\mu} f \|_{L^2}^2 \\
& = |(D^{\beta + \gamma-\mu} f , x^{2(\alpha-\mu)} D^{\beta + \gamma-\mu} f )| \\
& = |(f , D^{\beta + \gamma-\mu}  (x^{2(\alpha-\mu)} D^{\beta + \gamma-\mu} f) )| \\
& \leqs \sum_{\kappa \leqs \min(\beta+\gamma-\mu,2(\alpha-\mu))} \binom{\beta+\gamma-\mu}{\kappa} \binom{2(\alpha-\mu)}{\kappa} \kappa! |(x^{2(\alpha-\mu)-\kappa} f, D^{2(\beta + \gamma-\mu)-\kappa} f )| \\ 
& \leqs \sum_{\kappa \leqs \min(\beta+\gamma-\mu,2(\alpha-\mu))} \binom{\beta+\gamma-\mu}{\kappa} \binom{2(\alpha-\mu)}{\kappa} \kappa! \| x^{2(\alpha-\mu)-\kappa} f \|_{L^2}  \| D^{2(\beta + \gamma-\mu)-\kappa} f \|_{L^2}. 
\end{align*}

Set $\sigma = \max(t,s)$, $\tau = \min(t,s)$
and note that for $\delta > 0$ we have by \eqref{eq:expestimate0} for any $r > 0$
\begin{equation*}
\kappa!^{-\delta} 
\leqs C_{d,\delta,r,t,s} \left( \frac{2 d \tau}{r}\right)^{2 \sigma |\kappa|}, \quad \kappa \in \nn d.
\end{equation*}

From \eqref{eq:L2est1}, \eqref{eq:L2est2} and $\kappa! = \kappa!^{t + s -\delta}$ where $\delta = t + s - 1> 0$, we get if $r \geqs 2 d \sigma$
\begin{align*}
& \| x^{\alpha-\mu} D^{\beta + \gamma-\mu} f \|_{L^2}^2 \\
& \lesssim 2^{2 |\alpha+\beta|} 
\left( \frac{2 d \sigma}{r}\right)^{2 \tau | \alpha + \beta + \gamma - 2 \mu|}
\|  f \|_{t,r}'  \| \wh f \|_{s,r}' 
\sum_{\kappa \leqs \min(\beta+\gamma-\mu,2(\alpha-\mu))} \kappa! 
\left( \frac{2 d \tau}{r}\right)^{-2 \sigma |\kappa|} \\
& \qquad \qquad \qquad \qquad \qquad \qquad \qquad \qquad \times
\left( ((2(\alpha-\mu)-\kappa)! \right)^t  \left((2(\beta + \gamma -\mu)-\kappa)! \right)^s \\
& \lesssim 2^{2 |\alpha+\beta|} 
\left( \frac{r}{2 d \sigma}\right)^{4 \tau |\mu|}
\left( \frac{2 d \sigma}{r}\right)^{2 \tau | \alpha + \beta|}
 \|  f \|_{t,r}'  \| \wh f \|_{s,r}' \sum_{\kappa \leqs \min(\beta+\gamma-\mu,2 (\alpha-\mu))}
 (\kappa!)^{-\delta} 
 \left( \frac{2 d \tau}{r}\right)^{-2 \sigma |\kappa|} \\
& \qquad \qquad \qquad \qquad \qquad \qquad \qquad \qquad \times
((2(\alpha-\mu))! )^t ((2(\beta + \gamma-\mu))!)^s \\
& \lesssim 2^{2 |\alpha+\beta|} 
\left( \frac{r}{2 d \sigma}\right)^{8 \sigma k}
\left( \frac{2 d \sigma}{r}\right)^{2 \tau | \alpha + \beta|}
 \|  f \|_{t,r}'  \| \wh f \|_{s,r}' \sum_{\kappa \leqs \min(\beta+\gamma-\mu,2 (\alpha-\mu))}
 ((2(\alpha-\mu))! )^t ((2(\beta + \gamma-\mu))!)^s \\
& \leqs C_{r,k} \, 2^{4 |\alpha+\beta|} 
\left( \frac{2 d \sigma}{r}\right)^{2 \tau | \alpha + \beta|}
 \|  f \|_{t,r}'  \| \wh f \|_{s,r}' 
 ((2(\alpha-\mu))! )^t ((2(\beta + \gamma-\mu))!)^s. 
\end{align*}

We insert this into \eqref{eq:intermediateestimate1} which gives, using $\mu! \leqslant \mu!^{t + s}$ and 
$$
(2(\alpha-\mu))! \leqslant 2^{2|\alpha|} ((\alpha-\mu)!)^2,
$$
\begin{align*}
& |x^\alpha D^\beta f(x)| \\
& \lesssim 2^{2 |\alpha+\beta|} 
\left( \frac{2 d \sigma}{r}\right)^{\tau | \alpha + \beta|}
\left( \|  f \|_{t,r}'  \| \wh f \|_{s,r}' \right)^{\frac12} \max_{|\gamma| \leqslant 2k} \sum_{\mu \leqs \min(\alpha,\gamma) } \binom{\gamma}{\mu} \binom{\alpha}{\mu} \mu!  ((2(\alpha-\mu))!)^{\frac{t}{2}} ( (2(\beta-\mu))!))^{\frac{s}{2}} \\
& \lesssim 2^{(2 + t + s) |\alpha+\beta|} 
\left( \frac{2 d \sigma}{r}\right)^{\tau | \alpha + \beta |}
\left( \|  f \|_{t,r}'  \| \wh f \|_{s,r}' \right)^{\frac12} \max_{|\gamma| \leqslant 2k} \sum_{\mu \leqs \min(\alpha,\gamma) } \binom{\alpha}{\mu} \alpha!^t \beta!^s \\
& \lesssim \left( 2^{3 + t + s} \left( \frac{2 d \sigma}{r} \right)^\tau \right)^{|\alpha+\beta|}  
\alpha!^t \beta!^s ( \|  f \|_{t,r}'  + \| \wh f \|_{s,r}' ), \quad x \in \rr d, \quad \alpha,\beta \in \nn d.
\end{align*}
Given $0 < h \leqs 1$ we may pick $r \geqs 2 d \sigma$ such that 
\begin{equation*}
h = 2^{3 + t + s} \left( \frac{2 d \sigma}{r} \right)^\tau. 
\end{equation*}
This finally proves \eqref{eq:GSorig2ffhat}. 

Next we use \eqref{eq:GSffhat2orig} in order to prove
\begin{equation}\label{eq:GSSTFT2orig}
\forall r > 0 \ \exists h > 0: \ 
\| f \|_r'' \lesssim \nm f  {\mathcal S_{t,h}^s}, \quad f \in \Sigma_t^s(\rr d).
\end{equation}

Let $r > 0$ and $\fy \in \Sigma_t^s(\rr d) \setminus 0$, and set $\kappa = \max (\kappa(t^{-1}), \kappa(s^{-1}) )$. 
Then \eqref{eq:STFTGFstfunc} and \eqref{eq:exppeetre} give for $f \in \Sigma_t^s(\rr d)$
\begin{align*}
|V_\fy f (x,\xi)| 
& = |\wh{f T_x \overline{\fy}} (\xi)|
\lesssim |\wh{f}| * |\wh{T_x \overline{\fy}}| (\xi)
= \int_{\rr d} |\wh{f}(\xi-\eta)| \, |\wh{\fy}(-\eta)| \, \dd \eta \\
& \lesssim \| \wh f \|_{s,2r \kappa}'  \| \wh \fy \|_{s,3r \kappa}' \int_{\rr d} e^{-2r \kappa |\xi-\eta|^{\frac1s} - 3 r \kappa |\eta|^{\frac1s} }  \, \dd \eta \\
& \lesssim \| \wh f \|_{s,2r \kappa}' \,  e^{-2r|\xi|^{\frac1s}} \int_{\rr d} e^{ (2 - 3 ) r \kappa |\eta|^{\frac1s}} \, \dd \eta \\
& \lesssim \| \wh f \|_{s,2r \kappa}'  \, e^{-2r|\xi|^{\frac1s}}, \quad x, \xi \in \rr d. 
\end{align*}
From this estimate and $|V_\fy f (x,\xi)| = |V_{\wh \fy} \wh f (\xi,-x)|$ we also obtain 
\begin{equation*}
|V_\fy f (x,\xi)|  
\lesssim \| f \|_{t,2r \kappa}'  e^{-2r |x|^{\frac1t}}, \quad x, \xi \in \rr d. 
\end{equation*}

We may conclude
\begin{align*}
e^{2 r \left( |x|^{\frac1t} +  |\xi|^{\frac1s} \right)} |V_\fy f (x,\xi)|^2
& = e^{2 r |x|^{\frac1t}} |V_\fy f (x,\xi)| \ e^{2 r |\xi|^{\frac1s}} |V_\fy f (x,\xi)| \\
& \lesssim \| f \|_{t,2r \kappa}'  \ \| \wh f \|_{s,2r \kappa}'
\end{align*}
which gives 
\begin{equation*}
\| f \|_r'' \lesssim \left( \| f \|_{t,2r \kappa}' \ \| \wh f \|_{s,2r \kappa}' \right)^{\frac12} \lesssim \| f \|_{t,2r \kappa}' + \| \wh f \|_{s,2r \kappa}'.
\end{equation*}
Combining with \eqref{eq:GSffhat2orig} we have proved \eqref{eq:GSSTFT2orig}. 

It remains to prove 
\begin{equation}\label{eq:GSorig2STFT}
\forall h > 0 \ \exists r > 0: \ 
\nm f  {\mathcal S_{t,h}^s}
\lesssim \| f \|_r'',  \quad f \in \Sigma_t^s(\rr d).
\end{equation}
which we do by means of \eqref{eq:GSorig2ffhat}. 

We use the strong version of the STFT inversion formula \eqref{eq:STFTinverse} and its Fourier transform, that is
\begin{align}
f(x) & = (2 \pi)^{-\frac{d}{2}} \int_{\rr {2d}} V_\fy f(y,\eta) M_\eta T_y \fy (x) \, \dd y \, \dd \eta, \label{eq:STFTinv1} \\
\wh f(\xi) & = (2 \pi)^{-\frac{d}{2}} \int_{\rr {2d}} V_\fy f(y,\eta) T_\eta M_{-y} \wh \fy (\xi) \, \dd y \, \dd \eta, \label{eq:STFTinv2}
\end{align}
where $f \in \Sigma_t^s(\rr d)$ and $\fy \in \Sigma_t^s(\rr d)$ satisfies $\| \fy \|_{L^2} = 1$. 

Set again $\kappa = \max (\kappa(t^{-1}), \kappa(s^{-1}) )$. 
From \eqref{eq:GSffhat2orig} and \eqref{eq:STFTinv1} we obtain for any $r > 0$ 
\begin{align*}
e^{r |x|^{\frac1t}} |f(x)| 
& \lesssim \int_{\rr {2d}} |V_\fy f(y,\eta)| \, e^{r |x|^{\frac1t}}  |\fy (x-y)| \, \dd y \, \dd \eta \\
& \lesssim \| f \|_{2r \kappa}''  \int_{\rr {2d}} e^{-2r \kappa \left( |y|^{\frac1t} + |\eta|^{\frac1s} \right)} \, e^{r |x|^{\frac1t} - r \kappa |x-y|^{\frac1t}} \dd y \, \dd \eta, \\
& \lesssim \| f \|_{2 r \kappa}''  \int_{\rr {2d}} e^{- 2r \kappa \left( |y|^{\frac1t} + |\eta|^{\frac1s} \right)} \, e^{r \kappa |y|^{\frac1t}} \dd y \, \dd \eta, \\
& \lesssim \| f \|_{2 r \kappa}'', \quad x \in \rr d, 
\end{align*}
which gives $\| f \|_{t,r}' \lesssim \| f \|_{2 r \kappa}''$. 

From \eqref{eq:STFTinv2} we obtain for any $r > 0$
\begin{align*}
e^{r |\xi|^{\frac1s}} |\wh f(\xi)| 
& \lesssim \int_{\rr {2d}} |V_\fy f(y,\eta)| \, e^{r |\xi|^{\frac1s}} |\wh \fy (\xi-\eta)| \, \dd y \, \dd \eta, \\
& \lesssim \| f \|_{2r \kappa}''  \int_{\rr {2d}} e^{-2r \kappa \left( |y|^{\frac1t} + |\eta|^{\frac1s} \right)} \, e^{r |\xi|^{\frac1s} - r \kappa |\xi-\eta|^{\frac1s}}  \, \dd y \, \dd \eta, \\
& \lesssim \| f \|_{2r \kappa}'', \quad \xi \in \rr d, 
\end{align*}
which gives $\| \wh f \|_{s,r}' \lesssim \| f \|_{2 r \kappa}''$. Thus $\| f \|_{t,r}' + \| \wh f \|_{s,r}' \lesssim \| f \|_{2 r \kappa}''$ so combining with \eqref{eq:GSorig2ffhat} we have proved \eqref{eq:GSorig2STFT}. 

Finally we note that the seminorms $\{\| f \|_{r}'', \ r >0 \}$ are equivalent to the same family of seminorms when the window function $\fy \in \Sigma_t^s(\rr d) \setminus 0$ is replaced by another function $\psi \in \Sigma_t^s(\rr d) \setminus 0$.
Indeed this is an immediate consequence of \eqref{eq:GSSTFT2orig} and \eqref{eq:GSorig2STFT}.
\end{proof}

%%%%%%%%%%%%%%%%%%%%%%%%%%%%%%%%%%%%%%%%%%
\section{Anisotropic Gelfand--Shilov wave front sets}\label{sec:anisotropicGSWF}
%%%%%%%%%%%%%%%%%%%%%%%%%%%%%%%%%%%%%%%%%%

%%%%%%%%%%%%%%%%%%%
\subsection{$s$-conic subsets}
%%%%%%%%%%%%%%%%%%%

For $s > 0$ we use subsets of $T^* \rr d \setminus 0$ that are $s$-conic, 
that is subsets closed under the operation $T^* \rr d \setminus 0 \ni (x,\xi) \mapsto ( \lambda x, \lambda^s \xi)$
for all $\lambda > 0$.
Thus $1$-conic is the same as the usual definition of conic. 

Let $t, s > 0$ be fixed. 
We need the following simplified version of a tool taken from \cite{Parenti1} and its references. 
Given $(x,\xi) \in \rr {2d} \setminus 0$ there is a unique $\lambda = \lambda(x,\xi) = \lambda_{t,s} (x,\xi) > 0$ such that 
\begin{equation*}
\lambda (x,\xi)^{-2t} | x |^2 + \lambda (x,\xi)^{-2s} | \xi |^2 = 1. 
\end{equation*}
Then $(x,\xi) \in \sr {2d-1}$ if and only if $\lambda (x,\xi) = 1$. 
By the implicit function theorem the function $\lambda: \rr {2d} \setminus 0 \to \ro_+$ is smooth \cite{Krantz1}. 

If $\mu > 0$ and $(x,\xi) \in \sr {2d-1}$ then $\lambda ( \mu^t x, \mu^s \xi) = \mu = \mu \lambda (x,\xi)$. 
In fact 
\begin{equation}\label{eq:quasihomogen1}
\lambda ( \mu^t x, \mu^s \xi) = \mu \lambda (x,\xi)
\end{equation}
holds for any $(x,\xi) \in \rr {2d} \setminus 0$ and $\mu > 0$ by the following argument. 
Given $(x,\xi) \in \rr {2d} \setminus 0$ set $\mu_1 = \lambda (x,\xi)$ so that 
$(x/ \mu_1^t, \xi/\mu_1^s) \in \sr {2d-1}$. 
Then for $\mu > 0$
\begin{equation*}
\lambda ( \mu^t x, \mu^s \xi) = \lambda ( (\mu \mu_1)^t x/\mu_1^t, (\mu \mu_1)^s \xi/\mu_1^s ) 
= \mu \mu_1 
= \mu \lambda (x,\xi). 
\end{equation*}

The projection $p(x,\xi) = p_{t,s}(x,\xi)$ of $(x,\xi) \in \rr {2d} \setminus 0$ along the curve $\ro_+ \ni \mu \mapsto (\mu^t x, \mu^s \xi)$ onto $\sr {2d-1}$ is defined as
\begin{equation}\label{eq:projection}
p(x,\xi) = \left( \lambda(x,\xi)^{-t} x, \lambda(x,\xi)^{-s} \xi \right), \quad (x,\xi) \in \rr {2d} \setminus 0. 
\end{equation}
Then $p(\mu^t x, \mu^s \xi) = p(x, \xi)$ does not depend on $\mu > 0$. 
The function $p: \rr {2d} \setminus 0 \to \sr {2d-1}$ is smooth since $\lambda \in C^\infty(\rr {2d} \setminus 0)$
and $\lambda(x,\xi) > 0$ for all $(x,\xi) \in \rr {2d} \setminus 0$. 

Note that $\lambda_{t,s} (x,\xi) = \lambda_{1,\frac{s}{t}} (x,\xi)^{\frac1t}$, and thus $p_{t,s}(x,\xi) = p_{1,\frac{s}{t}}(x,\xi)$
depends only on $\frac{s}{t}$. 

From \cite{Parenti1}, or by straightforward arguments, we have the bounds
\begin{equation*}
%\label{eq:lambdaboundnonisotropic}
|x|^{\frac1t} + |\xi|^{\frac1s}
\lesssim \lambda(x,\xi) \lesssim |x|^{\frac1t} + |\xi|^{\frac1s}, \quad (x,\xi) \in \rr {2d} \setminus 0, 
\end{equation*}
and
\begin{equation*}
%\label{eq:lambdaboundisotropic}
\eabs{ (x,\xi) }^{\min \left( 1, \frac1t \right) \min \left( 1, \frac1s \right)}
\lesssim 1 + \lambda(x,\xi) 
\lesssim \eabs{(x,\xi)}^{\max \left( 1, \frac1t \right) \max \left( 1, \frac1s \right)}, \quad (x,\xi) \in \rr {2d} \setminus 0. 
\end{equation*}

We will use two types of $s$-conic neighborhoods. 
The first type is defined as follows. 

\begin{defn}\label{def:scone1}
Suppose $s, \ep > 0$ and $z_0 \in \sr {2d-1}$. 
Then
\begin{equation*}
\Gamma_{s, z_0, \ep}
= \{ (x,\xi) \in \rr {2d} \setminus 0, \ | z_0 - p_{1,s} (x,\xi) | < \ep \}
\subseteq  T^* \rr d \setminus 0. 
\end{equation*}
\end{defn}

We write $\Gamma_{z_0, \ep} = \Gamma_{s, z_0, \ep}$ when $s$ is fixed and understood from the context. 
If $\ep > 2$ then $\Gamma_{z_0, \ep} = T^* \rr d \setminus 0$ so we usually restrict to $\ep \leqs 2$.

The second type of $s$-conic neighborhood is defined as follows. 

\begin{defn}\label{def:scone2}
Suppose $s, \ep > 0$ and $(x_0, \xi_0) \in \sr {2d-1}$. 
Then
\begin{align*}
\wt \Gamma_{(x_0,\xi_0),\ep}
= \wt \Gamma_{s,(x_0,\xi_0),\ep} 
& = \{ (y,\eta) \in \rr {2d} \setminus 0: \ (y,\eta) = (\lambda (x_0 + x), \lambda^s (\xi_0 + \xi), \ \lambda > 0, \ (x,\xi) \in \rB_\ep \} \\
& = \{ (y,\eta) \in \rr {2d} \setminus 0: \ \exists \lambda > 0: \ (\lambda y, \lambda^s \eta) \in (x_0,\xi_0) + \rB_\ep \}.  
\end{align*}
\end{defn}

By \cite[Lemma~3.7]{Rodino4} the two types of $s$-conic neighborhoods are equivalent. 
This means that if $z_0 \in \sr {2d-1}$ then
for each $\ep > 0$ there exists $\delta > 0$ such that 
$\Gamma_{z_0, \delta} \subseteq \widetilde \Gamma_{z_0,\ep}$
and
$\wt \Gamma_{z_0, \delta} \subseteq \Gamma_{z_0,\ep}$.

%%%%%%%%%%%%%%%%%%%%%%%%%%%%
\subsection{Anisotropic Gelfand--Shilov wave front sets}
%%%%%%%%%%%%%%%%%%%%%%%%%%%%

For $u \in (\Sigma_t^s)' (\rr d)$ the $t,s$-Gelfand--Shilov wave front set $\WF^{t,s} (u)$ was defined in \cite{Rodino3} as a closed subset of the phase space $T^* \rr d \setminus 0$ as follows. 

\begin{defn}\label{def:wavefrontGFst}
Let $s,t > 0$ satisfy $s + t > 1$, and suppose $\psi \in \Sigma_t^s(\rr d) \setminus 0$ and $u \in (\Sigma_t^s)'(\rr d)$. 
Then $(x_0,\xi_0) \in T^* \rr d \setminus 0$ satisfies $(x_0,\xi_0) \notin \WF^{t,s} (u)$ if there exists an open set $U \subseteq T^*\rr d \setminus 0$ containing $(x_0,\xi_0)$ such that 
\begin{equation}\label{eq:notinWFGFst1}
\sup_{\lambda > 0, \ (x,\xi) \in U} e^{r \lambda} |V_\psi u(\lambda^t x, \lambda^s \xi)| < \infty, \quad \forall r > 0. 
\end{equation}
\end{defn}

Due to \eqref{eq:STFTGFstdistr} it is clear that it suffices to check \eqref{eq:notinWFGFst1} for $\lambda \geqs L$
where $L > 0$ can be arbitrarily large, for each $r > 0$. 

A consequence of Definition \ref{def:wavefrontGFst} is that $\WF^{t,s} (u)$ is an $\frac{s}{t}$-conic closed subset of $T^* \rr d \setminus 0$. 
If $t = s > \frac12$ and $u \in \Sigma_s' (\rr d)$ then $\WF^{s,s} (u) = \WF^s(u)$, so we recapture 
the $s$-Gelfand--Shilov wave front set $\WF^s (u)$ (which is a slightly modified version of Cappiello's and Schulz's \cite[Definition~2.1]{Cappiello1}),
as defined originally in \cite[Definition~4.1]{Carypis1}: 

\begin{defn}\label{def:wavefrontGFs}
Let $s > 1/2$, $\psi \in \Sigma_s(\rr d) \setminus 0$ and $u \in \Sigma_s'(\rr d)$. 
Then $z_0 \in T^*\rr d \setminus 0$ satisfies $z_0 \notin \WF^s (u)$ if there exists an open conic set $\Gamma_{z_0} \subseteq T^*\rr d \setminus 0$ containing $z_0$ such that 
\begin{equation*}
\sup_{z \in \Gamma_{z_0}} e^{r | z |^{\frac1s}} |V_\psi u(z)| < \infty, \quad \forall r > 0. 
\end{equation*}
\end{defn}

In Definition \ref{def:wavefrontGFst} we ask for exponential decay with arbitrary parameter $r > 0$ (super-exponential) of $V_\psi u$ along the curve $C_{x,\xi} \in T^* \rr d$
defined by $\ro_+ \ni  \lambda \to (\lambda^t x, \lambda^s \xi)$ which passes through $(x,\xi) \in U \subseteq T^* \rr d \setminus 0$. 
This power type curve reduces to a straight line if $t=s$.
By \eqref{eq:STFTGFstdistr} a generic point $(x,\xi) \in T^* \rr d \setminus 0$ has an exponential growth upper bound along the curve $C_{x,\xi}$. 
Due to \eqref{eq:STFTGFstfunc} we have $\WF^{t,s} (u) = \emptyset$ if and only if $u \in \Sigma_t^s(\rr d)$. 
Thus $\WF^{t,s} (u) \subseteq T^* \rr d \setminus 0$ can be seen as a measure of singularities of $u \in (\Sigma_t^s)'(\rr d)$: It records the phase space points $(x,\xi) \in T^* \rr d \setminus 0$ such that $V_\psi u$ does not decay super-exponentially along the curve $C_{x,\xi}$, 
that is, does not behave like an element in $\Sigma_t^s(\rr d)$ there. 

The $t,s$-Gelfand--Shilov wave front set is related to the anisotropic Gabor wave front set 
\cite{Rodino4,Zhu1} where the functional framework is the Schwartz space and tempered distributions, 
and the decay and growth in phase space are polynomial rather than exponential. 

By \cite[Proposition~3.5]{Rodino3}, Definition \ref{def:wavefrontGFst} does not depend on the window function $\psi \in \Sigma_t^s(\rr d) \setminus 0$. 
If $\check u(x) = u(-x)$ then 
\begin{equation}\label{eq:evensteven0}
V_{\check \psi} \check u(x,\xi)
= V_\psi u(-x,-\xi). 
\end{equation}
If $u$ is even or odd we thus have the following symmetry:
\begin{equation}\label{eq:evensteven1}
\check u = \pm u \quad \Longrightarrow \quad \WF^{t,s} (u) = - \WF^{t,s} (u). 
\end{equation}

We also have 
\begin{equation}\label{eq:evensteven2}
V_{\psi} \overline{u}(x,\xi)
= \overline{V_{\overline \psi} u(x,-\xi)}. 
\end{equation}

By \cite[Remark~3.4]{Rodino3} we have $\WF^{t p ,s p} (u) \subseteq \WF^{t,s} (u)$
if $p \geqs 1$, $t + s > 1$ and $u \in (\Sigma_{t p}^{s p})'(\rr d)$. 

If $(y,\eta) \in \wt \Gamma_{\frac{s}{t},(x_0,\xi_0), \ep}$ for $0 < \ep < 1$ then for some $\lambda > 0$
and $(x,\xi) \in \rB_\ep$ we have
$(y,\eta) = (\lambda^t (x_0+x), \lambda^s (\xi_0+\xi))$. 
Thus $|y|^{\frac1t} + |\eta|^{\frac1s} \asymp \lambda$,
which gives 
the following equivalent criterion to the condition \eqref{eq:notinWFGFst1} in Definition \ref{def:wavefrontGFst}. 
The point $(x_0,\xi_0) \in \sr {2d-1}$ satisfies $(x_0,\xi_0) \notin \WF^{t,s} (u)$ if and only if for some $\ep > 0$
we have
\begin{equation}\label{eq:WFgs2}
\sup_{(x,\xi) \in \Gamma_{\frac{s}{t}, (x_0,\xi_0), \ep}} e^{r \left( |x|^{\frac1t} + |\xi|^{\frac1s} \right) } |V_\fy u (x,\xi)| < + \infty \quad \forall r > 0. 
\end{equation}

We will use the following result on the anisotropic Gelfand--Shilov wave front set of a tensor product. 
Here we use the notation $x=(x',x'') \in \rr {m+n}$, $x' \in \rr m$, $x'' \in \rr n$. 
%The corresponding result in the framework of tempered distributions is \cite[Proposition~3.2]{Wahlberg2}. 

\begin{prop}\label{prop:tensorWFs}
If $t,s > 0$, $t + s > 1$, $u \in \left(\Sigma_t^s \right)' (\rr m)$, and $v \in \left(\Sigma_t^s \right)' (\rr n)$ then 
\begin{align*}
& \WF^{t,s} (u \otimes v) \subseteq \left( ( \WF^{t,s} (u) \cup \{0\} ) \times ( \WF^{t,s}(v) \cup \{0\} ) \right)\setminus 0 \\
& = \{ (x,\xi) \in T^* \rr {m+n} \setminus 0: \ (x',\xi') \in \WF^{t,s}(u) \cup \{ 0 \}, \ (x'',\xi'') \in \WF^{t,s}(v) \cup \{ 0 \} \} \setminus 0. 
\end{align*}
\end{prop}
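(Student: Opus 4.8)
The plan is to work directly with the STFT characterization of $\WF^{t,s}$ and the tensorization formula for the STFT itself. Fix nonzero windows $\psi_1 \in \Sigma_t^s(\rr m)$ and $\psi_2 \in \Sigma_t^s(\rr n)$, and set $\psi = \psi_1 \otimes \psi_2 \in \Sigma_t^s(\rr{m+n}) \setminus 0$, which is admissible as a window for $\WF^{t,s}(u\otimes v)$ by the window-independence from \cite[Proposition~3.5]{Rodino3}. The key algebraic fact is the factorization
\begin{equation*}
V_\psi (u \otimes v)(x,\xi) = V_{\psi_1} u (x',\xi') \, V_{\psi_2} v (x'',\xi''),
\end{equation*}
which follows immediately from the definition of the STFT as $\cF(u T_x \overline\psi)$ together with the fact that the Fourier transform and translation factor over tensor products. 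I would first record this identity, then reduce the problem to showing that if $(x_0,\xi_0) = (x_0',x_0'',\xi_0',\xi_0'') \in T^* \rr{m+n} \setminus 0$ lies \emph{outside} the right-hand side, then it lies outside $\WF^{t,s}(u\otimes v)$.

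\textbf{Key steps.} So suppose $(x_0,\xi_0)$ is not in the set on the right, i.e.\ it is not the case that both $(x_0',\xi_0') \in \WF^{t,s}(u) \cup \{0\}$ and $(x_0'',\xi_0'') \in \WF^{t,s}(v) \cup \{0\}$. Then at least one of the two ``primed'' or ``double-primed'' components avoids the corresponding wave front set and is nonzero; by symmetry assume $(x_0',\xi_0') \notin \WF^{t,s}(u) \cup \{0\}$, so in particular $(x_0',\xi_0') \neq 0$ and there is an open set $U' \ni (x_0',\xi_0')$ in $T^*\rr m \setminus 0$ with the super-exponential decay in \eqref{eq:notinWFGFst1} for $V_{\psi_1} u$ along the curves $\lambda \mapsto (\lambda^t x', \lambda^s \xi')$. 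Next I would choose an open neighborhood $U$ of $(x_0,\xi_0)$ in $T^*\rr{m+n} \setminus 0$ of product type, small enough that its projection to the first $\rr{2m}$ variables lies in $U'$ and is bounded away from $0$; along $U$ one has the relation $|x|^{\frac1t} + |\xi|^{\frac1s} \asymp |x'|^{\frac1t} + |\xi'|^{\frac1s} \asymp \lambda_{t,s}(x',\xi') \asymp \lambda_{t,s}(x,\xi)$ using the bounds on $\lambda$ recorded after \eqref{eq:projection}, together with the fact that on $U$ the first component stays bounded below. Combining the factorization identity, the super-exponential decay of $V_{\psi_1}u$ along the first-variable curve, and the at-most-exponential growth bound \eqref{eq:STFTGFstdistr} for $V_{\psi_2} v$, one gets that along $U$,
\begin{equation*}
e^{r\lambda} |V_\psi(u\otimes v)(\lambda^t x, \lambda^s \xi)| \lesssim e^{r\lambda} e^{-c r' \lambda} e^{r'' \lambda}
\end{equation*}
for every $r' > 0$ and a fixed $r'' > 0$, where $c>0$ comes from the $\lambda$-comparisons; taking $r'$ large relative to $r$ and $r''$ shows the supremum is finite for every $r>0$, hence $(x_0,\xi_0) \notin \WF^{t,s}(u\otimes v)$. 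Finally I would note the stated equality of the two descriptions of the right-hand side is just unwinding the definition of the Cartesian product with the origin removed.

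\textbf{Main obstacle.} The genuinely delicate point is the bookkeeping of the anisotropic dilation parameter: one must be careful that the single parameter $\lambda$ governing the curve in $\rr{m+n}$ is comparable, on the chosen neighborhood $U$, to the parameters governing the projected curves in $\rr m$ and $\rr n$, and that ``staying in $U'$'' is preserved under the dilation $\lambda \mapsto (\lambda^t x, \lambda^s \xi)$ — this is exactly where the equivalence of the two notions of $s$-conic neighborhood (\defnref{def:scone1} vs.\ \defnref{def:scone2}, via \cite[Lemma~3.7]{Rodino4}) and the quasi-homogeneity \eqref{eq:quasihomogen1} of $p_{t,s}$ are used. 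Once the neighborhood $U$ is chosen to be a genuine $\frac{s}{t}$-conic product neighborhood adapted to $U'$, the decay estimate is routine; the temptation to be resisted is trying to prove the inclusion coordinate-by-coordinate without first passing to such an adapted neighborhood, which would force one to track the growth of $V_{\psi_2}v$ even when $(x'',\xi'')$ is allowed to range over a non-conic set.
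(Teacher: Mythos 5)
Your proposal is correct and follows essentially the same route as the paper's proof: a tensor-product window, the factorization $V_{\psi_1\otimes\psi_2}(u\otimes v)(x,\xi)=V_{\psi_1}u(x',\xi')\,V_{\psi_2}v(x'',\xi'')$, the reduction by symmetry to $(x_0',\xi_0')\notin\WF^{t,s}(u)\cup\{0\}$, and the combination of super-exponential decay of $V_{\psi_1}u$ on a small ball with the exponential growth bound \eqref{eq:STFTGFstdistr} for $V_{\psi_2}v$ on a bounded set. The only remark worth making is that the ``main obstacle'' you describe is not actually there: since Definition~\ref{def:wavefrontGFst} uses a plain (not necessarily conic) open neighborhood of base points together with the single scalar parameter $\lambda$, the curve through $(x,\xi)=(x',x'',\xi',\xi'')$ projects onto the curve through $(x',\xi')$ with the very same $\lambda$, so no comparison of $\lambda_{t,s}$ for the full and projected variables and no appeal to the equivalence of the two types of $s$-conic neighborhoods is needed --- the paper simply takes a product of balls and estimates directly.
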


\begin{proof}
Let $\fy \in \Sigma_t^s(\rr m) \setminus 0$ and $\psi \in \Sigma_t^s(\rr n) \setminus 0$. 
Suppose $(x_0,\xi_0) \in T^* \rr {m+n} \setminus 0$ does not belong to the set on the right hand side. 
Then either $(x_0',\xi_0') \notin \WF^{t,s}(u) \cup \{ 0 \}$ or $(x_0'',\xi_0'') \notin \WF^{t,s}(v) \cup \{ 0 \}$. 
For reasons of symmetry we may assume $(x_0',\xi_0') \notin \WF^{t,s}(u) \cup \{ 0 \}$. 

Thus there exists $\ep > 0$ such that 
\begin{equation*}
\sup_{(x',\xi') \in (x_0',\xi_0') + \rB_\ep, \ \lambda > 0} e^{ r \lambda } |V_\fy u( \lambda^t x', \lambda^s \xi')| < \infty \quad \forall r > 0. 
\end{equation*}

Let $(x',\xi') \in (x_0',\xi_0') + \rB_\ep$, $(x'',\xi'') \in (x_0'',\xi_0'') + \rB_\ep$, 
let $r > 0$ be arbitrary, and let $\lambda \geqs 1$. 
We obtain using \eqref{eq:STFTGFstdistr}, for some $r_1 > 0$
\begin{align*}
e^{ r \lambda } |V_{\fy \otimes \psi} u \otimes v (\lambda^t x, \lambda^s \xi)|
& = e^{ r \lambda } |V_\fy u ( \lambda^t x', \lambda^s \xi')| \, |V_\psi v (\lambda^t x'', \lambda^s \xi'')| \\
& \lesssim e^{ (r + r_1) \lambda}  |V_\fy u ( \lambda^t x', \lambda^s \xi')| < \infty. 
\end{align*}
It follows that $(x_0,\xi_0) \notin \WF^{t,s} (u \otimes v)$. 
\end{proof}

%%%%%%%%%%%%%%%%%%%%%%%%%%%%%%%%%%%%%%%%%
\section{Propagation of anisotropic Gelfand--Shilov wave front sets}\label{sec:propagation}
%%%%%%%%%%%%%%%%%%%%%%%%%%%%%%%%%%%%%%%%%

Let $t,s > 0$ and $t + s > 1$.
Define for $K \in \left( \Sigma_t^s \right)'(\rr {2d})$
\begin{align*}
\WF_{1}^{t,s}(K) & = \{ (x,\xi) \in T^* \rr d: \ (x, 0, \xi, 0) \in \WF^{t,s} (K) \} \subseteq T^* \rr d \setminus 0, \\
\WF_{2}^{t,s}(K) & = \{ (y,\eta) \in T^* \rr d: \ (0, y, 0, -\eta) \in \WF^{t,s} (K) \} \subseteq T^* \rr d \setminus 0. 
\end{align*}

We will use the assumption
\begin{equation}\label{eq:WFKjempty}
\WF_{1}^{t,s}(K) = \WF_{2}^{t,s}(K) = \emptyset. 
\end{equation}  

It is clear that if \eqref{eq:WFKjempty} holds for $K \in \left( \Sigma_t^s \right)'(\rr {2d})$ then 
$\WF_{1}^{t p,s p}(K) = \WF_{2}^{t p,s p}(K) = \emptyset$
if $K \in \left( \Sigma_{t p}^{s p} \right)'(\rr {2d})$, for any $p \geqs 1$. 

The following lemma is an $\frac{s}{t}$-conic version of \cite[Lemma~6.1]{Carypis1} which treats the isotropic Gelfand--Shilov wave front set. 

\begin{lem}\label{lem:WFkernelaxes}
If $t,s > 0$, $K \in \left( \Sigma_t^s \right)' (\rr {2d})$ and \eqref{eq:WFKjempty} holds, then there exists $c > 1$ such that 
\begin{equation}\label{eq:Gamma1}
\WF^{t,s} (K) \subseteq 
\Gamma_1 := \left\{ (x,y,\xi,\eta) \in T^* \rr {2d}: \ c^{-1} \left( |x|^{\frac1t} + |\xi|^{\frac1s} \right) <  |y|^{\frac1t}  + |\eta|^{\frac1s}  < c \left( |x|^{\frac1t} + |\xi|^{\frac1s} \right)  \right\}. 
\end{equation}
\end{lem}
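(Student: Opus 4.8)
The plan is to argue by contradiction: suppose no such constant $c>1$ exists. Then for every $n \in \no$ there is a point $(x_n, y_n, \xi_n, \eta_n) \in \WF^{t,s}(K)$ violating one of the two inequalities defining $\Gamma_1$ with $c$ replaced by $n$; say, passing to a subsequence, infinitely many of these violate the upper bound, so that $|y_n|^{\frac1t} + |\eta_n|^{\frac1s} \geqs n \left( |x_n|^{\frac1t} + |\xi_n|^{\frac1s} \right)$. Since $\WF^{t,s}(K)$ is $\frac{s}{t}$-conic in the variables of $\rr {2d} \times \rr{2d}$ (in the anisotropic sense under the curve $\mu \mapsto (\mu^t \cdot, \mu^s \cdot)$), I may normalize each point to lie on the unit sphere $\sr{4d-1}$ via the projection $p_{t,s}$ introduced in Section~\ref{sec:anisotropicGSWF}; normalization does not change the ratio $\bigl( |y|^{\frac1t} + |\eta|^{\frac1s} \bigr) / \bigl( |x|^{\frac1t} + |\xi|^{\frac1s} \bigr)$ because all four block norms scale by the same factor $\mu$ under the quasi-homogeneous action, by \eqref{eq:quasihomogen1}.

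Next I would extract a convergent subsequence from these normalized points, using compactness of $\sr{4d-1}$, with limit $(x_*, y_*, \xi_*, \eta_*)$. Along the subsequence $|x_n|^{\frac1t} + |\xi_n|^{\frac1s} \to 0$ because the numerator is bounded (the point is on the unit sphere, so $\lambda \asymp 1$, hence $|y_n|^{\frac1t} + |\eta_n|^{\frac1s}$ stays bounded) while the ratio blows up. Therefore $x_* = 0$ and $\xi_* = 0$, and the limit point has the form $(0, y_*, 0, \eta_*)$ with $(y_*,\eta_*) \neq 0$. Since $\WF^{t,s}(K)$ is closed, $(0,y_*,0,\eta_*) \in \WF^{t,s}(K)$. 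Writing $\eta_* = -\eta_0$ with $(y_*, \eta_0) = (y_*, -\eta_*)$, this says $(0, y_*, 0, -\eta_0) \in \WF^{t,s}(K)$ with $(y_*, \eta_0) \neq 0$, i.e. $(y_*, \eta_0) \in \WF_2^{t,s}(K)$, contradicting \eqref{eq:WFKjempty}. The symmetric case — infinitely many points violating the lower bound, so $|x_n|^{\frac1t} + |\xi_n|^{\frac1s} \geqs n\bigl(|y_n|^{\frac1t} + |\eta_n|^{\frac1s}\bigr)$ — is handled identically: the normalized limit is of the form $(x_*, 0, \xi_*, 0)$ with $(x_*, \xi_*) \neq 0$, which lies in $\WF_1^{t,s}(K)$, again contradicting \eqref{eq:WFKjempty}.

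The one point requiring a little care — and the main (minor) obstacle — is the normalization step and the claim that $\WF^{t,s}(K)$ is invariant under the map $(x,y,\xi,\eta) \mapsto (\mu^t x, \mu^t y, \mu^s \xi, \mu^s \eta)$ on $T^* \rr{2d} \setminus 0$. This is exactly the $\frac{s}{t}$-conicity noted after Definition~\ref{def:wavefrontGFst} (applied in dimension $2d$ rather than $d$), so it is available; one just has to be careful that the two "space" blocks $x$ and $y$ both scale with $\mu^t$ and the two "frequency" blocks $\xi$ and $\eta$ both scale with $\mu^s$, which is precisely how the anisotropic wave front set in $T^* \rr{2d}$ is defined. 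With that in hand the whole argument is a compactness-plus-closedness routine, and the anisotropic bounds $|x|^{\frac1t} + |\xi|^{\frac1s} \asymp \lambda_{t,s}(x,\xi)$ recalled in Section~\ref{sec:anisotropicGSWF} guarantee the scalar quantities behave as claimed.
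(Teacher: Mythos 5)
Your proposal is correct and follows essentially the same route as the paper: contradiction via a sequence violating the bound, anisotropic rescaling onto $\sr{4d-1}$ (under which the defining inequality is invariant), compactness of the sphere plus closedness and $\frac{s}{t}$-conicity of $\WF^{t,s}(K)$ to produce a limit point of the form $(0,y,0,\eta)$ or $(x,0,\xi,0)$, contradicting \eqref{eq:WFKjempty}. The only cosmetic difference is that the paper treats the two inequalities as two separate containments (taking the larger constant), while you combine them with a pigeonhole/subsequence step; this changes nothing of substance.
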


\begin{proof}
Suppose 
\begin{equation*}
\WF^{t,s} (K) \subseteq \left\{ (x,y,\xi,\eta) \in T^* \rr {2d}: \ |y|^{\frac1t}  + |\eta|^{\frac1s}  < c \left( |x|^{\frac1t}  + |\xi|^{\frac1s} \right) \right\}
\end{equation*}
does not hold for any $c > 0$. Then for each $n \in \no$ there exists $(x_n,y_n,\xi_n,\eta_n) \in \WF^{t,s} (K)$ such that 
\begin{equation}\label{eq:phasespaceineq1}
|y_n|^{\frac1t}  + |\eta_n|^{\frac1s} \geqs n \left( |x_n| ^{\frac1t} + |\xi_n|^{\frac1s} \right).
\end{equation}
By rescaling $(x_n,y_n,\xi_n,\eta_n)$ as
\begin{equation*}
(x_n,y_n,\xi_n,\eta_n) \mapsto ( \lambda^t x_n, \lambda^t y_n, \lambda^{s} \xi_n, \lambda^{s} \eta_n) 
\end{equation*}
we obtain for a unique $\lambda = \lambda (x_n,y_n,\xi_n,\eta_n) > 0$ a vector in $\sr {4d-1}$ \cite{Rodino4}. 
This $\frac{s}{t}$-conic rescaling leaves \eqref{eq:phasespaceineq1} invariant. 
Abusing notation we still denote the rescaled vector $(x_n,y_n,\xi_n,\eta_n) \in \WF^{t,s} (K) \cap \sr {4d-1}$.

From \eqref{eq:phasespaceineq1}  it follows that $(x_n,\xi_n) \rightarrow 0$ as $n \rightarrow \infty$. 
Passing to a subsequence (without change of notation) and using the closedness of $\WF^{t,s} (K)$ gives
\begin{equation*}
(x_n,y_n,\xi_n,\eta_n) \rightarrow (0,y,0,\eta) \in \WF^{t,s} (K), \quad n \rightarrow \infty, 
\end{equation*}
for some $(y,\eta) \in \sr {2d-1}$. This implies $(y,-\eta) \in \WF_{2}^{t,s} (K)$ which contradicts the assumption \eqref{eq:WFKjempty}. 

Similarly one shows 
\begin{equation*}
\WF^{t,s} (K) \subseteq \left\{ (x,y,\xi,\eta) \in T^* \rr {2d}: \  |x|^{\frac1t} + |\xi|^{\frac1s}  <  c \left( |y|^{\frac1t} + |\eta|^{\frac1s} \right) \right\}
\end{equation*}
for some $c > 0$ using $\WF_{1}^{t,s}(K) = \emptyset$. 
\end{proof}

The set $\Gamma_1 \subseteq \rr {4d} \setminus 0$ in \eqref{eq:Gamma1} is open, and $\frac{s}{t}$-conic in the sense that it is closed with respect
to $(x,y,\xi,\eta) \mapsto ( \lambda^t (x,y), \lambda^s (\xi, \eta))$ for any $\lambda > 0$. 
Hence $(\rr {4d} \setminus \Gamma_1)$ is $\frac{s}{t}$-conic and $(\rr {4d} \setminus \Gamma_1) \cap \sr{4d-1}$ is compact. 
From \eqref{eq:WFgs2} we then obtain if $\Phi \in \Sigma_t^s(\rr {2d}) \setminus 0$
\begin{equation}\label{eq:WFKcomplement}
| V_\Phi K( x, y, \xi, - \eta) |  
\lesssim e^{- r \left( |(x,y)|^{\frac1t} + |( \xi,\eta )|^{\frac1s} \right) } , \quad r > 0, \quad ( x, y, \xi, - \eta) \in \rr {4d} \setminus \Gamma_1. 
\end{equation}

A kernel $K \in \left( \Sigma_t^s \right)'(\rr {2d})$ defines a continuous linear map $\cK: \Sigma_t^s (\rr d) \to \left( \Sigma_t^s \right)'(\rr d)$ by
\begin{equation}\label{eq:kernelop}
(\cK f, g) = (K, g \otimes \overline f), \quad f,g \in \Sigma_t^s(\rr d). 
\end{equation}

The following result says that \eqref{eq:WFKjempty} implies continuity of $\cK$ on $\Sigma_t^s(\rr d)$
and admits a unique extension to a continuous operator on $(\Sigma_t^s)'(\rr d)$. 
This is the basis for the forthcoming result on propagation of the $t,s$-Gelfand--Shilov wave front sets Theorem \ref{thm:WFGSpropagation}. 
In the proof we use the conventional notation (cf. \cite{Hormander1,Hormander2}) for the reflection operator in the fourth $\rr d$ coordinate in $\rr {4d}$
\begin{equation}\label{eq:reflection}
(x,y,\xi,\eta)' = (x,y,\xi,-\eta), \quad x,y,\xi,\eta \in \rr d. 
\end{equation}

\begin{prop}\label{prop:opSTFTformula}
Let $t,s > 0$ satisfy $t + s > 1$, and let $\cK: \Sigma_t^s(\rr d) \to \left( \Sigma_t^s \right)'(\rr d)$ be the continuous linear operator \eqref{eq:kernelop}
defined by the Schwartz kernel $K \in \left( \Sigma_t^s \right)' (\rr {2d})$. 
If \eqref{eq:WFKjempty} holds then

\begin{enumerate}[\rm (i)]

\item $\cK: \Sigma_t^s(\rr d) \to \Sigma_t^s(\rr d)$ is continuous; 

\item $\cK$ extends uniquely to a continuous linear operator $\cK: \left( \Sigma_t^s \right)' (\rr d) \to \left( \Sigma_t^s \right)' (\rr d)$; 

\item if $\fy \in \Sigma_t^s(\rr d)$, $\| \fy \|_{L^2} = 1$, $\Phi = \fy \otimes \fy \in \Sigma_t^s(\rr {2d})$, $u \in \left( \Sigma_t^s \right)' (\rr d)$ and $\psi \in \Sigma_t^s(\rr d)$, then
\begin{equation}\label{eq:opSTFT1}
(\cK u, \psi) 
= \int_{\rr {4d}} V_\Phi K(x,y,\xi,-\eta) \, \overline{V_\fy \psi (x,\xi)} \, V_{\overline \fy} u(y,\eta) \, \dd x \, \dd y \, \dd \xi \, \dd \eta. 
\end{equation}

\end{enumerate}
\end{prop}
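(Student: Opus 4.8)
The plan is to derive the key integral representation \eqref{eq:opSTFT1} first, by formally expanding all three distributions in the STFT inversion formula, and then use the decay estimate \eqref{eq:WFKcomplement} to justify absolute convergence of the resulting integral; the mapping properties (i) and (ii) will then follow as consequences.

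First I would establish \eqref{eq:opSTFT1} on a formal level. Starting from the defining relation $(\cK u, \psi) = (K, \psi \otimes \overline u)$ and Moyal's identity \eqref{eq:moyal} for $K$ with window $\Phi = \fy \otimes \fy$, one gets $(\cK u, \psi) = (V_\Phi K, V_\Phi(\psi \otimes \overline u))_{L^2(\rr{4d})}$. Since $\Phi$ is a tensor product, the STFT factorizes: $V_\Phi(\psi \otimes \overline u)(x,y,\xi,\eta) = V_\fy \psi(x,\xi) \, V_\fy \overline u(y,\eta)$, and by \eqref{eq:evensteven2} we have $V_\fy \overline u(y,\eta) = \overline{V_{\overline\fy} u(y,-\eta)}$. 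Substituting and changing the variable $\eta \mapsto -\eta$ in the integral yields exactly the claimed formula \eqref{eq:opSTFT1}. The point is that these manipulations are \emph{a priori} only formal because $u$ and $K$ are ultradistributions, so the pairing $(K, \psi \otimes \overline u)$ need not even make sense; the integral on the right-hand side of \eqref{eq:opSTFT1} must be shown to converge absolutely, and this is where the hypothesis \eqref{eq:WFKjempty} enters.

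The main technical step, and the one I expect to be the principal obstacle, is proving absolute convergence of the integral in \eqref{eq:opSTFT1}. One splits $\rr{4d}$ into the region $(x,y,\xi,-\eta) \in \Gamma_1$ and its complement. On the complement, \eqref{eq:WFKcomplement} gives super-exponential decay of $|V_\Phi K(x,y,\xi,-\eta)|$ in all variables, which together with the polynomial (in fact at worst exponential, by \eqref{eq:STFTGFstdistr}) growth bounds on $V_\fy\psi$ and $V_{\overline\fy} u$ makes the integrand integrable. On $\Gamma_1$ itself, the lemma gives $|(x,\xi)|^{1/t}\!+\!|(x,\xi)|^{1/s} \asymp |(y,\eta)|^{1/t}\!+\!|(y,\eta)|^{1/s}$, so one uses the super-exponential decay of $V_\fy\psi(x,\xi)$ (valid since $\psi \in \Sigma_t^s$, by \eqref{eq:STFTGFstfunc} or Lemma \ref{lem:Sigmatsseminorm}) to beat both the exponential growth of $V_{\overline\fy} u(y,\eta)$ — which, thanks to the $\Gamma_1$-equivalence of the $(x,\xi)$- and $(y,\eta)$-sizes, is controlled by decay in $(x,\xi)$ — and the exponential growth of $V_\Phi K$ coming from \eqref{eq:STFTGFstdistr}. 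The bookkeeping with the constants $\kappa(t^{-1}),\kappa(s^{-1})$ in \eqref{eq:exppeetre} and the $\asymp$ relation between $\lambda$ and $|x|^{1/t}+|\xi|^{1/s}$ is the delicate part.

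Having established \eqref{eq:opSTFT1}, part (iii), the remaining items follow. For (i): if $u \in \Sigma_t^s(\rr d)$, then $V_{\overline\fy} u$ decays super-exponentially, and the same convergence argument, now with \emph{both} $V_\fy\psi$ and $V_{\overline\fy} u$ super-exponentially decaying, shows that the seminorm estimate $|(\cK u,\psi)| \lesssim \|u\|\,\|\psi\|$ holds with an appropriate $\Sigma_t^s$-seminorm of $\psi$ on the right, which by Lemma \ref{lem:Sigmatsseminorm} (duality between the seminorm families) means $\cK u \in \Sigma_t^s(\rr d)$ with continuous dependence on $u$; the needed seminorm bounds are obtained by inserting the STFT-based seminorms of Lemma \ref{lem:Sigmatsseminorm} and tracking the exponents. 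For (ii): define the extension $\cK u$ for $u \in (\Sigma_t^s)'(\rr d)$ directly by the right-hand side of \eqref{eq:opSTFT1}, viewed as a conjugate-linear functional of $\psi \in \Sigma_t^s(\rr d)$; continuity in $\psi$ follows from the convergence estimates, so $\cK u \in (\Sigma_t^s)'(\rr d)$, and weak$^*$-continuity of $u \mapsto \cK u$ is immediate from the integral formula since $V_{\overline\fy} u(y,\eta) = (2\pi)^{-d/2}(u, M_\eta T_y \overline\fy)$ depends weak$^*$-continuously on $u$ and the integral converges with a bound uniform on weak$^*$-bounded sets. Uniqueness of the extension follows because $\Sigma_t^s(\rr d)$ is weak$^*$-dense in $(\Sigma_t^s)'(\rr d)$.
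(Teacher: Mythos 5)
Your derivation of \eqref{eq:opSTFT1} via Moyal's identity \eqref{eq:moyal}, the tensor factorization of $V_\Phi$, and \eqref{eq:evensteven2} is correct for $u \in \Sigma_t^s(\rr d)$ (this is exactly what the paper imports from the cited lemma), and your convergence analysis of the right-hand side — splitting $\rr{4d}$ into $\Gamma_1$ and its complement, using \eqref{eq:WFKcomplement} off $\Gamma_1$ and the comparability in \eqref{eq:Gamma1} on $\Gamma_1$ to let the arbitrary-rate decay of $V_\fy\psi$ absorb the fixed exponential growth of $V_\Phi K$ and $V_{\overline\fy}u$ — is the same mechanism the paper uses. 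Your route to (ii)--(iii) (define the extension directly by the absolutely convergent integral, check consistency on $\Sigma_t^s$, get weak$^*$ sequential continuity from equicontinuity/uniform bounds plus dominated convergence, and uniqueness from density) differs in order from the paper, which instead approximates $u$ by truncated STFT inversion integrals $u_n \in \Sigma_t^s(\rr d)$ and passes to the limit; both are viable, though the density of $\Sigma_t^s$ in $(\Sigma_t^s)'$ that your uniqueness step needs is itself established in the paper precisely by that approximation.

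The genuine gap is in your argument for (i). A bound of the form $|(\cK u,\psi)| \lesssim \|u\|\,\|\psi\|$ with a $\Sigma_t^s$-seminorm of $\psi$ on the right only expresses that $\cK u$ is a continuous conjugate-linear functional on $\Sigma_t^s(\rr d)$, i.e.\ an ultradistribution; it cannot yield $\cK u \in \Sigma_t^s(\rr d)$, and Lemma \ref{lem:Sigmatsseminorm} is a statement about seminorms on $\Sigma_t^s$, not a duality that upgrades such a pairing bound to membership. Indeed, testing against $\psi = \Pi(x,\xi)\fy$ a bound by $\|\Pi(x,\xi)\fy\|_r''$ gives only \emph{growth} $e^{r\kappa(|x|^{1/t}+|\xi|^{1/s})}$ of $V_\fy(\cK u)(x,\xi)$, whereas membership in $\Sigma_t^s$ requires decay $e^{-r(|x|^{1/t}+|\xi|^{1/s})}$ for every $r>0$ by \eqref{eq:STFTGFstfunc}. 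What is needed — and what the paper does — is to write $V_\fy(\cK u)(x,\xi) = (2\pi)^{-d/2}(\cK u, \Pi(x,\xi)\fy)$, insert \eqref{eq:opSTFT1}, and exploit that the resulting factor $V_\fy\fy(x-y,\xi-\eta)$ localizes near $(x,\xi)$: off $\Gamma_1$ one extracts $e^{-r(|x|^{1/t}+|\xi|^{1/s})}$ from the kernel decay \eqref{eq:WFKcomplement}, and on $\Gamma_1$ from the window decay transferred through the comparability in \eqref{eq:Gamma1}, at the price of a fixed-rate seminorm $\|u\|_{r_2}''$ of $u$. This yields the seminorm-to-seminorm estimate $\|\cK u\|_r'' \lesssim \|u\|_{r_2}''$ for every $r>0$ (with the seminorms \eqref{eq:seminormsSigmats2}), which is what continuity $\Sigma_t^s \to \Sigma_t^s$ means; your proposal as written does not produce this estimate, so part (i) is not proved.
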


\begin{proof}
By \cite[Lemma~5.1]{Wahlberg1} the formula \eqref{eq:opSTFT1} holds for $u,\psi \in \Sigma_t^s(\rr d)$. 

Let $\fy \in \Sigma_t^s(\rr d)$ satisfy $\| \fy \|_{L^2} = 1$ and set $\Phi = \fy \otimes \fy \in \Sigma_t^s(\rr {2d})$. 
Since
\begin{equation*}
\overline{V_\fy \Pi(x,\xi) \fy (y,\eta)} = e^{i \la y, \eta - \xi \ra} V_\fy \fy ( x-y, \xi - \eta)
\end{equation*}
we get from \eqref{eq:opSTFT1} for $u \in \Sigma_t^s (\rr d)$ and $(x,\xi) \in T^* \rr d$
\begin{equation}\label{eq:STFTKu}
\begin{aligned}
& V_\fy(\cK u) (x, \xi) 
= (2 \pi)^{-\frac{d}{2}} (\cK u, \Pi(x,\xi) \fy) \\
& = (2 \pi)^{-\frac{d}{2}} \int_{\rr {4d}} e^{i \la y,\eta -\xi \ra} V_\Phi K (y,z,\eta,-\theta) V_\fy \fy (x-y,\xi-\eta) \, V_{\overline \fy} u(z,\theta) \, \dd y \, \dd z \, \dd \eta \, \dd \theta. 
\end{aligned}
\end{equation}

This gives 
\begin{equation}\label{eq:STFTKuabs}
|V_\fy(\cK u) (x, \xi)|
\lesssim \int_{\rr {4d}} | V_\Phi K (y,z,\eta,-\theta) | \, | V_\fy \fy (x-y,\xi-\eta)| \, | V_{\overline \fy} u(z,\theta)| \, \dd y \, \dd z \, \dd \eta \, \dd \theta. 
\end{equation}

We use the seminorms \eqref{eq:seminormsSigmats2}, denoted $\| \cdot \|_{r}''$ for $r > 0$ as in the proof of Lemma \ref{lem:Sigmatsseminorm}. 
Let $r > 0$, set $\kappa = \max (\kappa(t^{-1}), \kappa(s^{-1}) )$, and
consider first the right hand side integral of \eqref{eq:STFTKuabs} over $(y,z,\eta,-\theta) \in \rr {4d} \setminus \Gamma_1$
where $\Gamma_1$ is defined by \eqref{eq:Gamma1} with $c > 1$ chosen 
so that $\WF^{t,s} (K) \subseteq \Gamma_1$.
By Lemma \ref{lem:WFkernelaxes} we may use the estimates \eqref{eq:WFKcomplement}. 
Using \eqref{eq:STFTGFstfunc} and \eqref{eq:exppeetre} we obtain for any $r_1 > 0$
\begin{equation}\label{eq:STFTKuabs1}
\begin{aligned}
& \int_{\rr {4d} \setminus \Gamma_1'} 
|V_\Phi K(y,z,\eta,-\theta)| \,  | V_\fy \fy (x-y,\xi-\eta)|  \, |V_{\overline \fy} u (z,\theta)| \, \dd y \, \dd z \, \dd \eta \, \dd \theta \\
& \lesssim 
\int_{\rr {4d} \setminus \Gamma_1'} 
e^{-r_1 \left( |(y,z)|^{\frac1t} + |(\eta,\theta)|^{\frac1s} \right) } \,  
e^{- r \kappa \left( |x-y|^{\frac1t} + |\xi -\eta|^{\frac1s} \right) } \, 
|V_{\overline \fy} u (z,\theta)| \, \dd y \, \dd z \, \dd \eta \, \dd \theta \\
& \lesssim 
\| u \|_0'' \  e^{-r \left( |x|^{\frac1t} + |\xi|^{\frac1s} \right) }
\int_{\rr {4d}}  e^{ (r \kappa -r_1) \left( |(y,z)|^{\frac1t} + |(\eta,\theta)|^{\frac1s} \right) } \, \dd y \, \dd z \, \dd \xi \, \dd \eta \\
& \lesssim \| u \|_0'' \ e^{-r \left( |x|^{\frac1t} + |\xi|^{\frac1s} \right) }
\end{aligned}
\end{equation}
provided $r_1 > r \kappa$. 

Next we consider the right hand side integral \eqref{eq:STFTKuabs} over $(y,z,\eta,-\theta) \in \Gamma_1$. 
Then we may by Lemma \ref{lem:WFkernelaxes} 
use \eqref{eq:Gamma1}. 
Using \eqref{eq:STFTGFstdistr} and \eqref{eq:STFTGFstfunc} we obtain for some $r_1 > 0$
and any $r_2 > 0$
\begin{equation}\label{eq:STFTKuabs2}
\begin{aligned}
& \int_{\Gamma_1'} 
|V_\Phi K(y,z,\eta,-\theta)| \,  | V_\fy \fy (x-y,\xi-\eta)|  \, |V_{\overline \fy} u (z,\theta)| \, \dd y \, \dd z \, \dd \eta \, \dd \theta \\
& \lesssim 
\| u \|_{r_2}'' \  e^{-r \left( |x|^{\frac1t} + |\xi|^{\frac1s} \right) }
\int_{\Gamma_1'} 
e^{ r_1 \left( |(y,z)|^{\frac1t} + |(\eta,\theta)|^{\frac1s} \right) } \,  
e^{r \kappa \left( |y|^{\frac1t} + |\eta|^{\frac1s} \right) } \,
e^{-r_2 \left( |z|^{\frac1t} + |\theta|^{\frac1s} \right) } \, 
\dd x \, \dd y \, \dd \xi \, \dd \eta \\
& \leqs 
\| u \|_{r_2}'' \  e^{-r \left( |x|^{\frac1t} + |\xi|^{\frac1s} \right) }
\int_{\Gamma_1'} 
e^{ - \left( |(y,z)|^{\frac1t} + |(\eta,\theta)|^{\frac1s} \right) } \,  
e^{ (r_1 + 1) \kappa \left( |y|^{\frac1t} + |z|^{\frac1t} + |\eta|^{\frac1s} + |\theta|^{\frac1s} \right)} \\
& \qquad \qquad \qquad \qquad \qquad \qquad \qquad \qquad \qquad \qquad \qquad \times
e^{ \left(r \kappa c - r_2 \right) \left( |z|^{\frac1t} + |\theta|^{\frac1s} \right) } \, 
\dd x \, \dd y \, \dd \xi \, \dd \eta \\
& \leqs 
\| u \|_{r_2}'' \  e^{-r \left( |x|^{\frac1t} + |\xi|^{\frac1s} \right) }
\int_{\Gamma_1'} 
e^{ - \left( |(y,z)|^{\frac1t} + |(\eta,\theta)|^{\frac1s} \right) } \,  
e^{ \left((r_1 + 1) \kappa (1+c) + r \kappa c - r_2 \right) \left( |z|^{\frac1t} + |\theta|^{\frac1s} \right) } \, 
\dd x \, \dd y \, \dd \xi \, \dd \eta \\
& \lesssim 
\| u \|_{r_2}'' \  e^{-r \left( |x|^{\frac1t} + |\xi|^{\frac1s} \right) }
\end{aligned}
\end{equation}
provided $r_2 > 0$ is sufficiently large. 

Combining \eqref{eq:STFTKuabs1} and \eqref{eq:STFTKuabs2} we obtain from \eqref{eq:STFTKuabs}
$\| \cK u \|_r'' \lesssim \| u \|_{r_2}''$, which proves claim (i). 

To show claims (ii) and (iii) let $u \in \left( \Sigma_t^s \right)'(\rr d)$ and set for $n \in \no$
\begin{equation*}
u_n = (2 \pi)^{-\frac{d}{2}} \int_{|(y,\eta)| \leqs n} V_\fy u(y,\eta) \Pi(y,\eta) \fy \, \dd y \, \dd \eta. 
\end{equation*}
Let $r > 0$. 
From \eqref{eq:STFTGFstdistr} and \eqref{eq:STFTGFstfunc}
we obtain for some $r_1 > 0$ 
\begin{align*}
e^{r \left( |x|^{\frac1t} + |\xi|^{\frac1s} \right) }
|V_\fy u_n (x,\xi)| 
& \lesssim \int_{|(y,\eta)| \leqs n} |V_\fy u(y,\eta)| \, e^{r \left( |x|^{\frac1t} + |\xi|^{\frac1s} \right) } \, |V_\fy \fy(x-y, \xi - \eta)| \, \dd y \, \dd \eta \\
& \lesssim \int_{|(y,\eta)| \leqs n} e^{r_1 \left( |y|^{\frac1t} + |\eta|^{\frac1s} \right) } \, e^{r \left( |x|^{\frac1t} + |\xi|^{\frac1s} \right) } \, e^{ - \kappa r \left( |x-y|^{\frac1t} + |\xi-\eta|^{\frac1s} \right) } \, \dd y \, \dd \eta \\
& \lesssim \int_{|(y,\eta)| \leqs n}  e^{r_1 \left( |y|^{\frac1t} + |\eta|^{\frac1s} \right) + r \kappa \left( |y|^{\frac1t} + |\eta|^{\frac1s} \right) } \, \dd y \, \dd \eta \\
& \leqs C_{n,r,r_1}, \quad (x,\xi) \in \rr {2d}. 
\end{align*}
It follows that $u_n \in \Sigma_t^s(\rr d)$ for $n \in \no$. 

The fact that $u_n \to u$ in $( \Sigma_t^s )'(\rr d)$ as $n \to \infty$ is a consequence of 
\eqref{eq:moyal}, \eqref{eq:STFTGFstdistr}, \eqref{eq:STFTGFstfunc} and dominated convergence. 

We also need the estimate (cf. \cite[Eq.~(11.29)]{Grochenig1})
\begin{equation*}
|V_{\overline{\fy}} u_n (y,\eta)| \leqs (2 \pi)^{-\frac{d}{2}} |V_\varphi u| * |V_{\overline{\fy}} \fy| (y,\eta), \quad (y,\eta) \in \rr {2d}, 
\end{equation*}
which in view of \eqref{eq:STFTGFstdistr} and \eqref{eq:STFTGFstfunc}
gives the bound
\begin{equation}\label{eq:STFTupperbound3}
|V_{\overline{\fy}} u_n (y,\eta)| \lesssim e^{\kappa (1+r_1) \left( | y |^{\frac1t} + | \eta |^{\frac1s} \right) }, \quad ( y,\eta ) \in \rr {2d}, \quad n \in \no, 
\end{equation}
for some $r_1 > 0$, that holds uniformly over $n \in \no$. 

We are now in a position to assemble the arguments into a proof of formula 
\eqref{eq:opSTFT1} for $u \in ( \Sigma_t^s )'(\rr d)$ and $\psi \in \Sigma_t^s(\rr d)$. 
Set
\begin{equation}\label{eq:STFTequalitylimit1}
(\cK u, \psi) 
= \lim_{n \to \infty} (\cK u_n, \psi) 
= \lim_{n \to \infty} 
\int_{\rr {4d}} V_\Phi K(x,y,\xi,-\eta) \, \overline{V_\fy \psi (x,\xi)} \, V_{\overline \fy} u_n (y,\eta) \, \dd x \, \dd y \, \dd \xi \, \dd \eta.
\end{equation}

We have $V_{\overline \fy} u_n(y,\eta) \to V_{\overline \fy} u(y,\eta)$ as $n \to \infty$ for all $(y,\eta) \in \rr {2d}$. 
In order to show that the right hand side of \eqref{eq:STFTequalitylimit1} is well defined and \eqref{eq:opSTFT1} holds, 
it thus suffices by dominated convergence to show that the modulus of the integrand in \eqref{eq:STFTequalitylimit1} is bounded by an integrable function that does not depend on $n \in \no$.

Consider first the right hand side integral \eqref{eq:STFTequalitylimit1} over $(x,y,\xi,-\eta) \in \rr {4d} \setminus \Gamma_1$
where $\Gamma_1$ is defined by \eqref{eq:Gamma1} with $c > 1$ again chosen 
so that $\WF^{t,s} (K) \subseteq \Gamma_1$.
By Lemma \ref{lem:WFkernelaxes} we may use the estimates \eqref{eq:WFKcomplement}. 
Using \eqref{eq:STFTupperbound3} we obtain for any $r_2 > 0$
\begin{equation}\label{eq:seminormest1}
\begin{aligned}
& \int_{\rr {4d} \setminus \Gamma_1'} 
|V_\Phi K(x,y,\xi,-\eta)| \,  |V_\fy \psi (x,\xi)| \, |V_{\overline \fy} u_n (y,\eta)| \, \dd x \, \dd y \, \dd \xi \, \dd \eta \\
& \lesssim 
\int_{\rr {4d} \setminus \Gamma_1'} 
e^{ - r_2 \left( |(x,y)|^{\frac1t} + |(\xi,\eta)|^{\frac1s} \right) } \, 
|V_\fy \psi (x,\xi)| \, 
e^{\kappa (1+r_1) \left( | y |^{\frac1t} + | \eta |^{\frac1s} \right) } \, 
\dd x \, \dd y \, \dd \xi \, \dd \eta \\
& \lesssim 
\| \psi \|_0''
\int_{\rr {4d}}  e^{ ( \kappa (1+r_1) - r_2) \left( |(x,y)|^{\frac1t} + |(\xi,\eta)|^{\frac1s} \right) }  \, \dd x \, \dd y \, \dd \xi \, \dd \eta \\
& \lesssim \| \psi \|_0'' < \infty
\end{aligned}
\end{equation}
provided $r_2 > \kappa (1+r_1)$. 

Next we consider the right hand side integral \eqref{eq:STFTequalitylimit1} over $(x,y,\xi,-\eta) \in \Gamma_1$. 
Then we may by Lemma \ref{lem:WFkernelaxes} 
use \eqref{eq:Gamma1}. 
From \eqref{eq:STFTGFstdistr} and again \eqref{eq:STFTupperbound3} we obtain for some $r_2 > 0$
\begin{equation}\label{eq:seminormest2}
\begin{aligned}
& \int_{\Gamma_1'} 
|V_\Phi K(x,y,\xi,-\eta)| \,  |V_\fy \psi (x,\xi)| \, |V_{\overline \fy} u_n (y,\eta)| \, \dd x \, \dd y \, \dd \xi \, \dd \eta \\
& \lesssim 
\int_{\Gamma_1'} 
e^{ r_2 \left( |(x,y)|^{\frac1t} + |(\xi,\eta)|^{\frac1s} \right)} \, |V_\fy \psi (x,\xi)| \, e^{ \kappa (1+r_1) \left( |y|^{\frac1t} + |\eta|^{\frac1s} \right)} \, \dd x \, \dd y \, \dd \xi \, \dd \eta \\
& \leqs
\int_{\Gamma_1'} 
e^{ - \left( |(x,y)|^{\frac1t} + |(\xi,\eta)|^{\frac1s} \right)} \,  
e^{ \kappa (1 + r_2) \left( |x|^{\frac1t} + |y|^{\frac1t} + |\xi|^{\frac1s} + |\eta|^{\frac1s} \right) + \kappa (1+r_1) \left( |y|^{\frac1t} + |\eta|^{\frac1s} \right)}
\, |V_\fy \psi (x,\xi)| \, \dd x \, \dd y \, \dd \xi \, \dd \eta \\
& \leqs
\int_{\Gamma_1'} 
e^{ - \left( |(x,y)|^{\frac1t} + |(\xi,\eta)|^{\frac1s} \right)} \,  
e^{ \left( \kappa (1 +  r_2) (1+c) + \kappa (1+r_1) c \right) \left( |x|^{\frac1t} + |\xi|^{\frac1s} \right)}
\, |V_\fy \psi (x,\xi)| \, \dd x \, \dd y \, \dd \xi \, \dd \eta \\
& \lesssim 
\| \psi \|_{\kappa \left( (1 + r_2) (1+c) +  (1+r_1) c \right) }'' < \infty. 
\end{aligned}
\end{equation}
The estimates \eqref{eq:seminormest1} and \eqref{eq:seminormest2} prove our claim that  
the modulus of the integrand in right hand side of \eqref{eq:STFTequalitylimit1} is bounded by an $L^1(\rr {4d})$ function uniformly over $N \in \no$. 
Thus \eqref{eq:STFTequalitylimit1} extends the domain of $\cK$ from $\Sigma_t^s(\rr d)$ to $\left( \Sigma_t^s \right)'(\rr d)$. 
We have shown claim (iii). 

From \eqref{eq:seminormest1} and \eqref{eq:seminormest2} we also see that $\cK u$ extended to the domain $u \in \left( \Sigma_t^s \right)'(\rr d)$ satisfies 
$\cK u \in \left( \Sigma_t^s \right)' (\rr d)$. To prove claim (ii) it remains to show the continuity of the extension \eqref{eq:STFTequalitylimit1} on $\left( \Sigma_t^s \right)'(\rr d)$. 
The uniqueness of the extension is a consequence of the continuity. 

Let $(u_n)_{n = 1}^\infty \subseteq \left( \Sigma_t^s \right)'(\rr d)$ be a sequence such that $u_n \to 0$ in $\left( \Sigma_t^s \right)'(\rr d)$ as $n \to \infty$. 
Then $V_{\overline \fy} u_n(y,\eta) \to 0$ as $n \to \infty$ for all $(y,\eta) \in \rr {2d}$. 
By the Banach--Steinhaus theorem \cite[Theorem~V.7]{Reed1}, $(u_n)_{n = 1}^\infty$ is equicontinuous. 
This means that there exists $r > 0$ such that 
\begin{equation*}
|(u_n, \psi)| 
\lesssim \| \psi \|_r ''
= \sup_{(x,\xi) \in \rr {2d}} e^{ r \left( |x|^{\frac1t} + |\xi|^{\frac1s} \right)}  |V_\fy \psi (x,\xi)|, \quad \psi \in \Sigma_t^s(\rr d), \quad n \in \no. 
\end{equation*}

Hence
\begin{align*}
|V_{\overline \fy} u_n(y,\eta)| 
& = (2 \pi)^{- \frac{d}{2}} |(u_n, \Pi(y,\eta) \overline \fy )| 
\lesssim \sup_{(x,\xi) \in \rr {2d}} e^{ r \left( |x|^{\frac1t} + |\xi|^{\frac1s} \right)} |V_\fy (\Pi(y,\eta) \overline \fy) (x,\xi)| \\
& = \sup_{(x,\xi) \in \rr {2d}} e^{ r \left( |x|^{\frac1t} + |\xi|^{\frac1s} \right)} |V_\fy \overline \fy (x-y,\xi-\eta) | \\
& \lesssim \sup_{(x,\xi) \in \rr {2d}} e^{ r \left( |x|^{\frac1t} + |\xi|^{\frac1s} \right)} 
e^{ - r \kappa \left( |x-y|^{\frac1t} + |\xi-\eta|^{\frac1s} \right)}
\lesssim e^{ r \kappa \left( |y|^{\frac1t} + |\eta|^{\frac1s} \right)} , \quad (y,\eta) \in \rr {2d},
\end{align*}
uniformly for all $n \in \no$. 
From \eqref{eq:opSTFT1}, the estimates \eqref{eq:seminormest1}, \eqref{eq:seminormest2}, and dominated convergence it follows 
that $(\cK u_n, \psi) \to 0$ as $n \to \infty$ for all $\psi \in \Sigma_t^s(\rr d)$, that is $\cK u_n \to 0$ in $\left( \Sigma_t^s \right)'(\rr d)$. 
This finally proves claim (ii). 
\end{proof}

Now we start to prepare for the main result Theorem \ref{thm:WFGSpropagation}. 
We will use the relation mapping between a subset $A \subseteq X \times Y$ of the Cartesian product of two sets $X$, $Y$, and a subset $B \subseteq Y$, 
\begin{equation*}
A \circ B = \{ x \in X: \, \exists y \in B: \, (x,y) \in A \} \subseteq X.  
\end{equation*}

When $X = Y = \rr {2d}$ we use the convention
\begin{equation*}
A' \circ B  = \{ (x,\xi) \in \rr {2d}: \,  \exists (y,\eta) \in B: \, (x,y,\xi,-\eta) \in A \}. 
\end{equation*}
Note that we use \eqref{eq:reflection}, and there is a swap of the second and third variables. 

If we denote by 
\begin{align*}
p_{1,3}(x,y,\xi,\eta) & = (x,\xi), \\
p_{2,-4}(x,y,\xi,\eta) & = (y,-\eta), \quad x,y,\xi, \eta \in \rr d, 
\end{align*}
the projections $\rr {4d} \rightarrow \rr {2d}$ onto the first and the third $\rr d$ coordinate, 
and onto the second and the fourth $\rr d$ coordinate with a change of sign in the latter, respectively, then we may write
\begin{equation}\label{eq:relationproj}
\WF^{t,s} (K)' \circ \WF^{t,s} (u) 
= p_{1,3} \left( \WF^{t,s} (K) \cap p_{2,-4}^{-1} \WF^{t,s} (u) \right). 
\end{equation}

\begin{lem}\label{lem:sconicclosed}
If $t,s > 0$, $t + s > 1$, $K \in \left( \Sigma_t^s \right)'(\rr {2d})$, \eqref{eq:WFKjempty} holds and $u \in \left( \Sigma_t^s \right)'(\rr d)$
then 
\begin{equation*}
\WF^{t,s} (K)' \circ \WF^{t,s} (u) \subseteq T^* \rr d \setminus 0
\end{equation*}
is $\frac{s}{t}$-conic and closed in $T^* \rr d \setminus 0$. 
\end{lem}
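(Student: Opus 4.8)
The plan is to verify the two assertions separately: first that $\WF^{t,s}(K)'\circ \WF^{t,s}(u)$ avoids the origin and is $\frac{s}{t}$-conic, and then that it is closed in $T^*\rr d\setminus 0$.

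For the conic part I would first note that \eqref{eq:WFKjempty} gives in particular $\WF_2^{t,s}(K)=\emptyset$, so no point of the form $(0,y,0,-\eta)$ lies in $\WF^{t,s}(K)$ and hence $0\notin\WF^{t,s}(K)'\circ\WF^{t,s}(u)$. Next, given $(x,\xi)$ in the composition with a witness $(y,\eta)\in\WF^{t,s}(u)$ such that $(x,y,\xi,-\eta)\in\WF^{t,s}(K)$, I would use the $\frac{s}{t}$-conicity of $\WF^{t,s}(u)$ (closedness under $(y,\eta)\mapsto(\lambda^t y,\lambda^s\eta)$) and of $\WF^{t,s}(K)\subseteq T^*\rr{2d}\setminus 0$ (closedness under $(x,y,\xi,\eta)\mapsto(\lambda^t x,\lambda^t y,\lambda^s\xi,\lambda^s\eta)$) to conclude, for every $\lambda>0$, that $(\lambda^t y,\lambda^s\eta)$ is a witness for $(\lambda^t x,\lambda^s\xi)$.

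For closedness I would take a sequence $(x_n,\xi_n)\to(x,\xi)\in T^*\rr d\setminus 0$ with each $(x_n,\xi_n)\in\WF^{t,s}(K)'\circ\WF^{t,s}(u)$, and pick witnesses $(y_n,\eta_n)\in\WF^{t,s}(u)$ with $(x_n,y_n,\xi_n,-\eta_n)\in\WF^{t,s}(K)$. The key step is to confine the witnesses to a fixed compact subset of $\rr{2d}\setminus 0$: by Lemma \ref{lem:WFkernelaxes} the hypothesis \eqref{eq:WFKjempty} yields $c>1$ with $\WF^{t,s}(K)\subseteq\Gamma_1$, and evaluating the defining inequality \eqref{eq:Gamma1} at $(x_n,y_n,\xi_n,-\eta_n)$ gives
\begin{equation*}
c^{-1}\left(|x_n|^{\frac1t}+|\xi_n|^{\frac1s}\right)<|y_n|^{\frac1t}+|\eta_n|^{\frac1s}<c\left(|x_n|^{\frac1t}+|\xi_n|^{\frac1s}\right).
\end{equation*}
Since $(x_n,\xi_n)\to(x,\xi)\neq 0$, the middle quantity stays bounded above and, for large $n$, bounded below by a positive constant, so $(y_n,\eta_n)$ lies in a compact subset of $\rr{2d}\setminus 0$; passing to a subsequence I may assume $(y_n,\eta_n)\to(y,\eta)\neq 0$. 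Closedness of $\WF^{t,s}(u)$ in $T^*\rr d\setminus 0$ then gives $(y,\eta)\in\WF^{t,s}(u)$, and closedness of $\WF^{t,s}(K)$ in $T^*\rr{2d}\setminus 0$ together with $(x_n,y_n,\xi_n,-\eta_n)\to(x,y,\xi,-\eta)\neq 0$ gives $(x,y,\xi,-\eta)\in\WF^{t,s}(K)$; hence $(x,\xi)\in\WF^{t,s}(K)'\circ\WF^{t,s}(u)$.

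I expect the only genuine obstacle to be the compactness of the witness sequence $(y_n,\eta_n)$, which is exactly what the graph-type hypothesis \eqref{eq:WFKjempty} supplies through Lemma \ref{lem:WFkernelaxes}. In terms of the identity \eqref{eq:relationproj} this amounts to the properness of the projection $p_{1,3}$ restricted to $\WF^{t,s}(K)\cap p_{2,-4}^{-1}\WF^{t,s}(u)$, and without it closedness of the composition would typically fail, just as in the classical calculus of wave front sets and composition of relations.
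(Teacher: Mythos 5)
Your proposal is correct and follows essentially the same route as the paper: conicity by rescaling both wave front sets, and closedness by confining the witness sequence $(y_n,\eta_n)$ via Lemma \ref{lem:WFkernelaxes} and then using closedness of $\WF^{t,s}(K)$ and $\WF^{t,s}(u)$. The only cosmetic difference is that you obtain $(y,\eta)\neq 0$ from the lower inequality in \eqref{eq:Gamma1}, whereas the paper invokes $\WF_1^{t,s}(K)=\emptyset$ directly at the limit point; these are equivalent, since that inequality is itself derived from $\WF_1^{t,s}(K)=\emptyset$.
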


\begin{proof}
Let $(x,\xi) \in \WF^{t,s} (K)' \circ \WF^{t,s} (u)$. 
Then there exists $(y,\eta) \in \WF^{t,s} (u)$ such that $(x,y,\xi,-\eta) \in \WF^{t,s} (K)$. 

Let $\lambda > 0$.
Since $\WF^{t,s} (K)$ and $\WF^{t,s} (u)$ are $\frac{s}{t}$-conic we have 
$( \lambda^t x, \lambda^t y, \lambda^s \xi,- \lambda^s \eta) \in \WF^{t,s} (K)$
and $(\lambda^t y, \lambda^s \eta) \in \WF^{t,s} (u)$. 
It follows that $(\lambda^t x, \lambda^s \xi) \in \WF^{t,s} (K)' \circ \WF^{t,s} (u)$ 
which shows that $\WF^{t,s} (K)' \circ \WF^{t,s} (u)$ is $\frac{s}{t}$-conic. 

Next we assume that $(x_n,\xi_n) \in \WF^{t,s} (K)' \circ \WF^{t,s} (u)$ for $n \in \no$
and $(x_n, \xi_n) \to (x,\xi) \neq 0$ as $n \to +\infty$. 
For each $n \in \no$ there exists $(y_n,\eta_n) \in \WF^{t,s} (u)$ such that 
$(x_n,y_n,\xi_n,-\eta_n) \in \WF^{t,s} (K)$. 

Since the sequence $\{ (x_n, \xi_n)_n \} \subseteq T^* \rr d$ is bounded it follows from Lemma \ref{lem:WFkernelaxes}
that also the sequence $\{ (y_n, \eta_n)_n \} \subseteq T^* \rr d$ is bounded. 
Passing to a subsequence (without change of notation) we get convergence
\begin{equation*}
\lim_{n \to +\infty} (x_n,y_n,\xi_n,-\eta_n) 
= (x,y,\xi,-\eta) \in \rr {4d} \setminus 0. 
\end{equation*}
Here $(x,y,\xi,-\eta) \in \WF^{t,s} (K)$ since $\WF^{t,s} (K) \subseteq T^* \rr {2d} \setminus 0$ is closed, and $(y,\eta) \neq 0$ due to the assumption $\WF_1^{t,s} (K) = \emptyset$.
Since $\WF^{t,s} (u)\subseteq T^* \rr {d} \setminus 0$ is closed we have $(y,\eta) \in \WF^{t,s} (u)$.
We have proved that $(x,\xi) \in \WF^{t,s} (K)' \circ \WF^{t,s} (u)$ which shows that $\WF^{t,s} (K)' \circ \WF^{t,s} (u)$ is closed in $T^* \rr d \setminus 0$. 
\end{proof}

Let $s > 0$, let $G \subseteq T^* \rr d \setminus 0$ be a closed $s$-conic subset, and let $\ep > 0$. 
In the next result we use the notation
\begin{equation}\label{eq:GammaG}
\Gamma_{G,\ep} = \{ z \in T^* \rr d \setminus 0: \, \inf_{w \in G \cap \sr {2d-1}} | p_{1,s}(z) - w| < \ep \}.
\end{equation}
This generalizes Definition \ref{def:scone1} since $\Gamma_{G,\ep} =  \Gamma_{(x_0,\xi_0),\ep}$
if $G = \{ (\lambda x_0, \lambda^s \xi_0) \in T^* \rr d \setminus 0: \lambda > 0 \}$ and $(x_0,\xi_0) \in \sr {2d-1}$. 
Note that $\Gamma_{G,\ep}$ is an open $s$-conic set, and $G \subseteq \Gamma_{G,\ep}$.

\begin{lem}\label{lem:separationsconic}
Suppose $G_j \subseteq T^* \rr {2d} \setminus 0$ is closed $s$-conic for $j=1,2$, 
suppose $G_3 \subseteq T^* \rr d \setminus 0$ is closed $s$-conic, 
and suppose 
\begin{equation*}
G_1 \cap G_2 \cap p_{2,-4}^{-1} \left( G_3 \cup \{ 0 \} \right) \setminus 0 = \emptyset. 
\end{equation*}
Define $\Gamma_{j,\ep} = \Gamma_{G_j,\ep}$ for $\ep > 0$ and $j=1,2,3$. 
Then for some $\ep > 0$ we have 
\begin{equation*}
\Gamma_{1,\ep} \cap \Gamma_{2,\ep} \cap p_{2,-4}^{-1}  \left( \Gamma_{3,\ep} \cup \{ 0 \} \right) \setminus 0 = \emptyset. 
\end{equation*}
\end{lem}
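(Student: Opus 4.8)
The plan is to argue by contradiction through a compactness argument on the unit sphere, in the spirit of the proof of Lemma~\ref{lem:WFkernelaxes}. The first step is to note that the set under consideration is $s$-conic: each $\Gamma_{j,\ep}$ is an open $s$-conic set by the remark following \eqref{eq:GammaG}, and since the $s$-conic rescaling $(x,y,\xi,\eta)\mapsto(\lambda x,\lambda y,\lambda^s\xi,\lambda^s\eta)$ on $\rr{4d}$ is carried by $p_{2,-4}$ to the $s$-conic rescaling on $\rr{2d}$, both $p_{2,-4}^{-1}(\Gamma_{3,\ep})$ and $p_{2,-4}^{-1}(\{0\})$ are $s$-conic; hence so is the intersection $\Gamma_{1,\ep}\cap\Gamma_{2,\ep}\cap p_{2,-4}^{-1}(\Gamma_{3,\ep}\cup\{0\})\setminus 0$. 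Suppose now the conclusion fails. Then for each $n\in\no$ this intersection is nonempty with $\ep=1/n$, and by $s$-conicity I may apply the projection \eqref{eq:projection} (in dimension $4d$) to produce a point $z_n\in\sr{4d-1}$ still lying in $\Gamma_{1,1/n}\cap\Gamma_{2,1/n}\cap p_{2,-4}^{-1}(\Gamma_{3,1/n}\cup\{0\})$. Passing to a subsequence without renaming, $z_n\to z\in\sr{4d-1}$, so $z\neq 0$.

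Next I would identify the limit $z$. On $\sr{4d-1}$ the projection $p_{1,s}$ is the identity, so membership $z_n\in\Gamma_{1,1/n}$ provides $w_n\in G_1\cap\sr{4d-1}$ with $|z_n-w_n|<1/n$; then $w_n\to z$, and closedness of $G_1$ gives $z\in G_1$. The identical argument gives $z\in G_2$. For the remaining factor, set $\zeta_n=p_{2,-4}(z_n)$ and $\zeta=p_{2,-4}(z)$, so that $\zeta_n\to\zeta$ by continuity. If $\zeta=0$ then $z\in p_{2,-4}^{-1}(\{0\})$. If $\zeta\neq0$, then $\zeta_n\neq0$ for all large $n$, hence $\zeta_n\in\Gamma_{3,1/n}$; since $p_{1,s}$ is smooth, hence continuous, at the nonzero point $\zeta$, we get $p_{1,s}(\zeta_n)\to p_{1,s}(\zeta)$, and picking $w_n'\in G_3\cap\sr{2d-1}$ with $|p_{1,s}(\zeta_n)-w_n'|<1/n$ together with closedness of $G_3$ yields $p_{1,s}(\zeta)\in G_3$. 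Since $\zeta$ lies on the $s$-conic curve through $p_{1,s}(\zeta)$ (by the very definition \eqref{eq:projection} of the projection) and $G_3$ is $s$-conic, it follows that $\zeta\in G_3$, i.e. $z\in p_{2,-4}^{-1}(G_3)$.

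In both cases $z\in G_1\cap G_2\cap p_{2,-4}^{-1}(G_3\cup\{0\})$ with $z\neq0$, which contradicts the hypothesis and completes the argument. The points requiring some care, rather than real difficulty, are the verification that the compound set is $s$-conic — this is what legitimizes rescaling each $z_n$ onto $\sr{4d-1}$ — and the case split according to whether the limit $p_{2,-4}(z)$ vanishes, which is forced on us since $p_{1,s}$ is defined and continuous only away from the origin. Beyond this bookkeeping I do not expect any genuine obstacle.
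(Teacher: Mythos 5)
Your proposal is correct and follows essentially the same route as the paper's proof: argue by contradiction with $\ep = 1/n$, use $s$-conicity to normalize the witnesses onto $\sr{4d-1}$, extract a convergent subsequence, place the limit in $G_1 \cap G_2$ via closedness, and split on whether $p_{2,-4}$ of the limit vanishes, using continuity of $p_{1,s}$ away from the origin together with closedness and $s$-conicity of $G_3$ in the nonzero case. No gaps to report.
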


\begin{proof}
Note that $p_{2,-4}^{-1} \left( G_3 \cup \{ 0 \} \right) \setminus 0$ is closed $s$-conic in $T^* \rr {2d} \setminus 0$, and 
$p_{2,-4}^{-1}  \left( \Gamma_{3,\ep} \cup \{ 0 \} \right) \setminus 0$ is $s$-conic in $T^* \rr {2d} \setminus 0$ for any $\ep > 0$. 

Suppose that for each $n \in \no$ we have 
\begin{equation*}
X_n = (x_n, y_n, \xi_n, \eta_n) \in \Gamma_{1,\frac1n} \cap \Gamma_{2,\frac1n} \cap p_{2,-4}^{-1}  \left( \Gamma_{3,\frac1n} \cup \{ 0 \} \right) \setminus 0. 
\end{equation*}
Since $\Gamma_{j,\frac1n}$ for $j=1,2$, as well as $p_{2,-4}^{-1}  \left( \Gamma_{3,\frac1n} \cup \{ 0 \} \right) \setminus 0$,
are $s$-conic in $T^* \rr {2d} \setminus 0$,
we may assume that $| X_n | = 1$ for all $n \in \no$. 
Passing to a subsequence (without change of notation) we get $X_n \to X = (x, y, \xi, \eta) \in \sr {4d-1}$ as $n \to + \infty$. 

For each $n \in \no$ and $j = 1,2$ there exists $Y_{j,n} \in G_j \cap \sr {4d-1}$ such that $|X_n - Y_{j,n} | < \frac1n$. 
Thus $| X - Y_{j,n} | \leqs | X - X_n | + | X_n - Y_{j,n} | \to 0$ as $n \to + \infty$. 
Since $G_j$ is closed for $j=1,2$, 
it follows that $X \in G_1 \cap G_2$. 

Suppose $(y,\eta) = 0$. Then $X \in p_{2,-4}^{-1} \left( G_3 \cup \{ 0 \} \right) \setminus 0$
and thus 
\begin{equation*}
X \in G_1 \cap G_2 \cap p_{2,-4}^{-1} \left( G_3 \cup \{ 0 \} \right) \setminus 0
\end{equation*}
which contradicts the assumption.
Hence $(y,\eta) \neq 0$ must hold, and therefore $(y_n,-\eta_n) \in \Gamma_{3,\frac1n}$ if $n \geqs N$ for $N > 0$ sufficiently large. 

For each $n \geqs N$ there exists $Y_n \in G_3 \cap \sr {2d-1}$ such that $| p_{1,s}(y_n,-\eta_n) - Y_n | < \frac1n$. 
This gives $| p_{1,s}(y,-\eta) - Y_n | \leqs | p_{1,s}(y,-\eta) - p_{1,s}(y_n,-\eta_n) | + | p_{1,s}(y_n,-\eta_n) - Y_n | \to 0$
as $n \to + \infty$, taking into account the fact that $p_{1,s}$ is continuous. 
Since $G_3$ is closed it follows that $p_{1,s}(y,-\eta) \in G_3$. 
This implies $(y,-\eta) \in G_3$ using the fact that $G_3$ is $s$-conic. 
We arrive at the conclusion $X \in p_{2,-4}^{-1} \left( G_3 \cup \{ 0 \} \right) \setminus 0$
which again contradicts the assumption. 

We may conclude that for some $n \in \no$ we must have  
\begin{equation*}
\Gamma_{1,\frac1n} \cap \Gamma_{2,\frac1n} \cap p_{2,-4}^{-1}  \left( \Gamma_{3,\frac1n} \cup \{ 0 \} \right) \setminus 0 = \emptyset.
\end{equation*}
\end{proof}

Finally we may state and prove our main result on propagation of singularities.

\begin{thm}\label{thm:WFGSpropagation}
Let $t,s > 0$ satisfy $t + s > 1$, and let $\cK: \Sigma_t^s(\rr d) \to \left( \Sigma_t^s \right)' (\rr d)$ be the continuous linear operator \eqref{eq:kernelop}
defined by the Schwartz kernel $K \in \left( \Sigma_t^s \right)' (\rr {2d})$, and suppose that \eqref{eq:WFKjempty} holds. 
Then $\cK$ is continuous on $\Sigma_t^s(\rr d)$, extends uniquely to a continuous operator on $\left( \Sigma_t^s \right)' (\rr d)$,  
and for $u \in \left( \Sigma_t^s \right)' (\rr d)$
we have
\begin{equation*}
\WF^{t,s} (\cK u) \subseteq \WF^{t,s} (K)' \circ \WF^{t,s} (u).  
\end{equation*}
\end{thm}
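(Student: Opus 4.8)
The continuity statements are already contained in Proposition~\ref{prop:opSTFTformula}, so the remaining task is the wave front set inclusion. The plan is to argue by contradiction: fix a window $\fy \in \Sigma_t^s(\rr d)$ with $\|\fy\|_{L^2}=1$ and set $\Phi = \fy\otimes\fy$, and suppose $(x_0,\xi_0)\in T^*\rr d\setminus 0$ does \emph{not} belong to $\WF^{t,s}(K)'\circ\WF^{t,s}(u)$; we want to show $(x_0,\xi_0)\notin\WF^{t,s}(\cK u)$. By Lemma~\ref{lem:sconicclosed} the set $\WF^{t,s}(K)'\circ\WF^{t,s}(u)$ is closed and $\tfrac{s}{t}$-conic, so we may assume $(x_0,\xi_0)\in\sr{2d-1}$ and it has an $\tfrac{s}{t}$-conic neighborhood disjoint from it. The first step is to apply Lemma~\ref{lem:separationsconic} with $G_1 = \{(x_0,y,\xi_0,\eta)\}$-type cone (more precisely $G_1 = \Gamma_{\frac{s}{t},(x_0,\xi_0),\ep}$ lifted to $\rr{4d}$ in the $(x,\xi)$ slots, i.e.\ the $\tfrac{s}{t}$-conic set of those $(x,y,\xi,\eta)$ whose $(x,\xi)$-projection points near $(x_0,\xi_0)$), $G_2 = \WF^{t,s}(K)$, and $G_3 = \WF^{t,s}(u)$. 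The hypothesis $(x_0,\xi_0)\notin \WF^{t,s}(K)'\circ\WF^{t,s}(u)$ says precisely, via \eqref{eq:relationproj}, that $G_1\cap G_2\cap p_{2,-4}^{-1}(G_3\cup\{0\})\setminus 0 = \emptyset$ once $G_1$ is taken small enough. Lemma~\ref{lem:separationsconic} then yields an $\ep>0$ and open $\tfrac{s}{t}$-conic neighborhoods such that the corresponding fattened cones still have empty intersection.

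The second step is to run the STFT estimate on $V_\fy(\cK u)$ exactly as in the proof of Proposition~\ref{prop:opSTFTformula}, but now localized. Starting from \eqref{eq:STFTKuabs}, one splits the $(y,z,\eta,\theta)$-integration domain into three regions according to the three cones provided by Lemma~\ref{lem:separationsconic}: where $(y,z,\eta,-\theta)$ is outside the fattened $\WF^{t,s}(K)$-cone (there one uses the super-exponential decay \eqref{eq:WFKcomplement} of $V_\Phi K$), where $(z,-\theta)$ — equivalently $(z,\theta)$ — is outside the fattened $\WF^{t,s}(u)$-cone (there one uses that $(z,\theta)\notin\WF^{t,s}(u)$ gives super-exponential decay of $V_{\overline\fy}u$ via \eqref{eq:WFgs2}, while $V_\Phi K$ only grows like $e^{r_1(\cdots)}$ by \eqref{eq:STFTGFstdistr}), and the third region where $(x,\xi)$ — recall $(x,\xi)$ is near $(x_0,\xi_0)$ where we want decay — is \emph{forced} to stay near $(x_0,\xi_0)$; but in that third region the disjointness from Lemma~\ref{lem:separationsconic} means it is actually empty, so there is nothing to estimate. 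More carefully: in the integral defining $V_\fy(\cK u)(x,\xi)$ with $(x,\xi)$ restricted to a small neighborhood of $(x_0,\xi_0)$, the kernel $V_\fy\fy(x-y,\xi-\eta)$ decays super-exponentially away from $y\approx x$, $\eta\approx\xi$, so effectively $(y,\eta)$ is tied to $(x,\xi)$; thus if $(y,z,\eta,-\theta)$ lies in the fattened $\WF^{t,s}(K)$-cone and $(z,-\theta)$ in the fattened $\WF^{t,s}(u)$-cone, then $(y,\eta)$ — hence $(x,\xi)$ — cannot point near $(x_0,\xi_0)$, a contradiction. This is the standard mechanism and the bookkeeping mirrors \eqref{eq:STFTKuabs1}--\eqref{eq:STFTKuabs2}.

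Concretely I would write $1 = \chi_1 + \chi_2 + \chi_3$ as a (measurable, $\tfrac{s}{t}$-homogeneous-degree-zero) partition subordinate to the three open cones, estimate
\begin{equation*}
e^{r\lambda}\,|V_\fy(\cK u)(\lambda^t x,\lambda^s\xi)| \leqs I_1 + I_2 + I_3
\end{equation*}
for $(x,\xi)$ in a small ball around $(x_0,\xi_0)$, handle $I_1$ using \eqref{eq:WFKcomplement} together with the exponential Peetre inequalities \eqref{eq:exppeetre} (as in \eqref{eq:STFTKuabs1}), handle $I_2$ using \eqref{eq:STFTGFstdistr} for $V_\Phi K$ and the super-exponential decay of $V_{\overline\fy}u$ on the complement of $\Gamma_{\frac{s}{t},\WF^{t,s}(u),\ep}$ (as in the $\Gamma_1$-part of \eqref{eq:STFTKuabs2}, absorbing growth by choosing the decay parameter of $u$ large), and observe $I_3 \equiv 0$ because the support of $\chi_3$ intersected with the constraints from the other two cones and the localization of $(x,\xi)$ near $(x_0,\xi_0)$ is empty by Lemma~\ref{lem:separationsconic} — here one must also use Lemma~\ref{lem:WFkernelaxes}/\eqref{eq:Gamma1} to control the region where $V_\fy\fy(x-y,\xi-\eta)$ is not yet negligible, i.e.\ to know that $(y,\eta)$ genuinely stays in a cone around $(x_0,\xi_0)$. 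Collecting, one gets $\sup_{\lambda>0,(x,\xi)\in U} e^{r\lambda}|V_\fy(\cK u)(\lambda^t x,\lambda^s\xi)| < \infty$ for every $r>0$, which by Definition~\ref{def:wavefrontGFst} is exactly $(x_0,\xi_0)\notin\WF^{t,s}(\cK u)$.

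The main obstacle I anticipate is the clean separation argument making $I_3$ vanish: one has to be careful that the localization in $(x,\xi)$ near $(x_0,\xi_0)$ propagates through the convolution kernel $V_\fy\fy(x-y,\xi-\eta)$ to a genuine $\tfrac{s}{t}$-conic localization of $(y,\eta)$ near $(x_0,\xi_0)$ \emph{in the region where that kernel is not super-exponentially small}, and that the three fattened cones from Lemma~\ref{lem:separationsconic} were chosen with enough room to accommodate the (bounded, additive in the $\lambda$-variable) perturbation coming from $(x-y,\xi-\eta)$. This is precisely where the equivalence of the two notions of $s$-conic neighborhood (Definitions~\ref{def:scone1}--\ref{def:scone2}, \cite[Lemma~3.7]{Rodino4}) and Lemma~\ref{lem:WFkernelaxes} get used together; everything else is the same seminorm machinery already deployed in Proposition~\ref{prop:opSTFTformula}.
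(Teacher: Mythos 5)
Your overall architecture coincides with the paper's proof: continuity via Proposition~\ref{prop:opSTFTformula}, reduction to a point $z_0=(x_0,\xi_0)\in\sr{2d-1}$ outside $\WF^{t,s}(K)'\circ\WF^{t,s}(u)$ using Lemma~\ref{lem:sconicclosed}, the separation Lemma~\ref{lem:separationsconic} applied to exactly the three cones you name, and a three-way split of the integral \eqref{eq:opSTFTest1} in which the region off the fattened kernel cone is handled by \eqref{eq:WFKcomplement} and the region where $(z,\theta)$ leaves the fattened $\WF^{t,s}(u)$-cone is handled by the super-exponential decay of $V_{\overline\fy}u$ together with \eqref{eq:Gamma1}. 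Those two estimates are the paper's \eqref{eq:opSTFTsubest1} and \eqref{eq:opSTFTsubest2}, and your description of them is adequate.

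The genuine gap is your treatment of the third region. It is not empty and $I_3$ is not zero: the set of $(y,z,\eta,-\theta)$ lying in the fattened kernel cone with $(z,\theta)$ in the fattened $\WF^{t,s}(u)$-cone is a full-measure-type $\frac{s}{t}$-conic region; what Lemma~\ref{lem:separationsconic} gives is only that on this region $(y,\eta)\notin\overline{\wt\Gamma}_{z_0,2\ep}$. Since $V_\fy\fy$ merely decays and has no compact support, the heuristic that $(y,\eta)$ is ``tied'' to $(x,\xi)$ cannot be converted into a support statement or a contradiction; instead one must carry out a quantitative estimate, and this is the crux of the whole proof (the paper's \eqref{eq:opSTFTsubest3}). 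Concretely: for $(x,\xi)\in z_0+\rB_\ep$, $\lambda\geqs 1$ and $(y,\eta)\notin\overline{\wt\Gamma}_{z_0,2\ep}$ one has $|\lambda^t x-y|^{\frac1t}+|\lambda^s\xi-\eta|^{\frac1s}=\lambda\bigl(|x-\lambda^{-t}y|^{\frac1t}+|\xi-\lambda^{-s}\eta|^{\frac1s}\bigr)\geqs\alpha\lambda$ for some $\alpha>0$, so the window factor supplies decay $e^{-r\lambda}$ for every $r$; but on this region both $|V_\Phi K|$ and $|V_{\overline\fy}u|$ may grow exponentially (only \eqref{eq:STFTGFstdistr} is available), and that growth must be absorbed by the remaining part of the window decay, using \eqref{eq:Gamma1} to transfer growth between the $(y,\eta)$ and $(z,\theta)$ variables and the Peetre-type inequalities \eqref{eq:exppeetre}, with the parameters chosen in the right order. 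You flag this as ``the main obstacle'' and correctly name Lemma~\ref{lem:WFkernelaxes} as an ingredient, but the step is deferred rather than executed, and the justification you actually state ($I_3\equiv 0$ by emptiness) is false as written. A smaller point of the same kind: a partition of unity ``subordinate to the three open cones'' does not exist, since those cones do not cover $\rr{4d}$; the correct covering is the complement of $\Gamma_1$, together with $\Gamma_1\setminus p_{2,-4}^{-1}\Gamma_2$ and $\Gamma_1\setminus p_{1,3}^{-1}\overline{\wt\Gamma}_{z_0,2\ep}$, whose union is all of $\rr{4d}$ precisely because of \eqref{eq:emptyintersection}.
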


\begin{proof}
By Proposition \ref{prop:opSTFTformula} $\cK: \Sigma_t^s (\rr d) \to \Sigma_t^s(\rr d)$ is continuous and extends
uniquely to a continuous linear operator $\cK: \left( \Sigma_t^s \right)' (\rr d) \to \left( \Sigma_t^s \right)' (\rr d)$. 

Let $\fy \in \Sigma_t^s(\rr d)$ satisfy $\| \fy \|_{L^2} = 1$ and set $\Phi = \fy \otimes \fy \in \Sigma_t^s(\rr {2d})$. 
Proposition \ref{prop:opSTFTformula}, \eqref{eq:opSTFT1} and \eqref{eq:STFTKu} give
for $u \in \left( \Sigma_t^s \right)' (\rr d)$ and $(x,\xi) \in T^* \rr d$ and $\lambda > 0$
\begin{equation}\label{eq:opSTFTest1}
| V_\fy(\cK u) ( \lambda^t x, \lambda^s \xi) | 
\lesssim \int_{\rr {4d}} | V_\Phi K (y,z,\eta,-\theta) | \, |V_\fy \fy ( \lambda^t x-y, \lambda^s \xi-\eta) | \, | V_{\overline \fy} u(z,\theta) | \, \dd y \, \dd z \, \dd \eta \, \dd \theta. 
\end{equation}

We may assume that $\WF^{t,s} (K)' \circ \WF^{t,s} (u) \neq T^* \rr d \setminus 0$ since the conclusion is trivial otherwise. 
Suppose $z_0 = (x_0,\xi_0) \in T^* \rr d \setminus 0$ and
\begin{equation}\label{eq:notWFs1}
z_0 \notin \WF^{t,s} (K)' \circ \WF^{t,s} (u).
\end{equation}
To prove the theorem we show $z_0\notin \WF^{t,s} (\cK u)$. 

By Lemma \ref{lem:sconicclosed} the set $\WF^{t,s} (K)' \circ \WF^{t,s} (u)$ is $\frac{s}{t}$-conic and closed. 
Thus we may assume that $z_0 \in \sr {2d-1}$.
Moreover, with $\wt \Gamma_{z_0,2 \ep} = \wt \Gamma_{\frac{s}{t}, z_0, 2 \ep}$, there exists $\ep > 0$ such that 
\begin{equation*}
\overline{\wt \Gamma}_{z_0,2 \ep} \cap \left( \WF^{t,s} (K)' \circ \WF^{t,s} (u) \right)= \emptyset. 
\end{equation*}
Here $\overline{\wt \Gamma}_{z_0,2 \ep}$ denotes the closure of $\wt \Gamma_{z_0,2 \ep}$ in $T^* \rr d \setminus 0$. 
Using \eqref{eq:relationproj} we may write this as
\begin{equation*}
\overline{\wt \Gamma}_{z_0,2 \ep} \cap p_{1,3} \left( \WF^{t,s} (K) \cap p_{2,-4}^{-1} \WF^{t,s} (u) \right)= \emptyset
\end{equation*}
or equivalently 
\begin{equation*}
p_{1,3}^{-1} \overline{\wt \Gamma}_{z_0,2 \ep} \cap \WF^{t,s} (K) \cap p_{2,-4}^{-1} \WF^{t,s} (u) = \emptyset. 
\end{equation*}

Due to assumption \eqref{eq:WFKjempty} we may strengthen this into 
\begin{equation*}
p_{1,3}^{-1} \, (\overline{\wt \Gamma}_{z_0,2 \ep} \cup \{ 0 \} ) \setminus 0 \cap \WF^{t,s} (K) \cap p_{2,-4}^{-1} \, (\WF^{t,s} (u) \cup \{ 0 \} ) \setminus 0 = \emptyset.  
\end{equation*}
Note that $p_{1,3}^{-1} \, (\overline{\wt \Gamma}_{z_0,2 \ep} \cup \{ 0 \} ) \setminus 0$,  
$\WF^{t,s} (K)$,
and $p_{2,-4}^{-1} \, (\WF^{t,s} (u) \cup \{ 0 \} ) \setminus 0$ are all closed and $\frac{s}{t}$-conic
subsets of $T^* \rr {2d} \setminus 0$. 

Now Lemma \ref{lem:separationsconic} gives the following conclusion. 
There exists open $\frac{s}{t}$-conic subsets $\Gamma_1 \subseteq T^* \rr {2d} \setminus 0$ 
and $\Gamma_2 \subseteq T^* \rr d \setminus 0$
such that  
\begin{equation*}
\WF^{t,s} (K) \subseteq \Gamma_1, \quad \WF^{t,s} (u) \subseteq \Gamma_2
\end{equation*}
and 
\begin{equation}\label{eq:emptyintersection}
p_{1,3}^{-1} \overline{\wt \Gamma}_{z_0,2 \ep}
\cap \Gamma_1 \cap p_{2,-4}^{-1} \Gamma_2 = \emptyset. 
\end{equation}
By intersecting $\Gamma_1$ with the set $\Gamma_1$ defined in \eqref{eq:Gamma1}, 
we may by Lemma \ref{lem:WFkernelaxes} assume that \eqref{eq:Gamma1} holds true. 

Let $r > 0$. 
We will now start to estimate the integral \eqref{eq:opSTFTest1} when $(x,\xi) \in (x_0, \xi_0) + \rB_\ep$ for some $0 < \ep \leqs \frac12$
and $\lambda \geqs 1$. 

We split the domain $\rr {4d}$ of the integral \eqref{eq:opSTFTest1} into three pieces. 
Set $\kappa = \max (\kappa(t^{-1}), \kappa(s^{-1}) )$ and 
\begin{align*}
\delta & = \inf_{(x,\xi) \in (x_0, \xi_0) + \rB_\ep} |x|^{\frac1t} + |\xi|^{\frac1s} > 0, \\
\Delta & = \sup_{(x,\xi) \in (x_0, \xi_0) + \rB_\ep} |x|^{\frac1t} + |\xi|^{\frac1s} < + \infty. 
\end{align*}
First we integrate over $\rr {4d} \setminus \Gamma_1'$ where we may use 
\eqref{eq:WFKcomplement}. 
Combined with \eqref{eq:STFTGFstdistr} and \eqref{eq:STFTGFstfunc} this gives if $(x,\xi) \in (x_0, \xi_0) + \rB_\ep$ 
for some $r_1 > 0$ and any $r_2 > 0$
\begin{equation}\label{eq:opSTFTsubest1}
\begin{aligned}
& \int_{\rr {4d} \setminus \Gamma_1'} | V_\Phi K (y,z,\eta,-\theta) | \, |V_\fy \fy ( \lambda^t x-y, \lambda^s \xi-\eta) | \, | V_{\overline \fy} u(z,\theta) | \, \dd y \, \dd z \, \dd \eta \, \dd \theta \\
& \lesssim \int_{\rr {4d} \setminus \Gamma_1'} 
e^{- r_2 \left( |(y,z)|^{\frac1t} + |( \eta,\theta )|^{\frac1s} \right) }  
\, e^{- r \delta^{-1} \kappa \left( |\lambda^t x-y|^{\frac1t} + |\lambda^s \xi-\eta|^{\frac1s} \right) } 
\, e^{r_1 \left( |z|^{\frac1t} + |\theta|^{\frac1s} \right) } 
\, \dd y \, \dd z \, \dd \eta \, \dd \theta \\
& \leqs
e^{- r \delta^{-1} \lambda \left( |x|^{\frac1t} + |\xi|^{\frac1s} \right) }
\int_{\rr {4d} \setminus \Gamma_1'} 
e^{- r_2 \left( |(y,z)|^{\frac1t} + |( \eta,\theta )|^{\frac1s} \right)   
+ r \delta^{-1}  \kappa \left( |y|^{\frac1t} + |\eta|^{\frac1s} \right) 
+ r_1 \left( |z|^{\frac1t} + |\theta|^{\frac1s} \right) } 
\, \dd y \, \dd z \, \dd \eta \, \dd \theta \\
& \leqs
e^{- r \lambda }
\int_{\rr {4d}} 
e^{ (r_1 + r \delta^{-1} \kappa - r_2) \left( |(y,z)|^{\frac1t} + |( \eta,\theta )|^{\frac1s} \right) } 
\, \dd y \, \dd z \, \dd \eta \, \dd \theta \\
& \lesssim 
e^{- r \lambda}
\end{aligned}
\end{equation}
provided $r_2 > r_1 + r \delta^{-1} \kappa$. 

It remains to estimate the integral \eqref{eq:opSTFTest1} over $(y,z,\eta, - \theta) \in \Gamma_1$ where
we may use \eqref{eq:Gamma1}. 
By \eqref{eq:emptyintersection} we have 
\begin{equation}\label{eq:Gamma1union}
\Gamma_1 \subseteq \Omega_0 \cup \Omega_2
\end{equation}
where 
\begin{equation*}
\Omega_0 = \Gamma_1 \setminus p_{1,3}^{-1} \overline{\wt \Gamma}_{z_0,2 \ep}, \quad
\Omega_2 = \Gamma_1 \setminus p_{2,-4}^{-1} \Gamma_2.
\end{equation*}

First we estimate the integral over $(y,z,\eta, - \theta) \in \Omega_2$. 
Then $(z,\theta) \in \rr {2d} \setminus \Gamma_2$ which is a closed $\frac{s}{t}$-conic set. 
By $\WF^{t,s} (u) \subseteq \Gamma_2$, the compactness of $\sr {2d-1} \setminus \Gamma_2$ 
and \eqref{eq:WFgs2} we obtain the estimates
\begin{equation*}
|V_\fy u (z,\theta)| 
\lesssim e^{- r_2 \left( |z|^{\frac1t} + |\theta|^{\frac1s} \right) }, \quad (z,\theta) \in \rr {2d} \setminus \Gamma_2, \quad \forall r_2 > 0.
\end{equation*}

Together with \eqref{eq:Gamma1} and \eqref{eq:STFTGFstdistr} this gives if $(x,\xi) \in (x_0, \xi_0) + \rB_\ep$ 
for some $r_1 > 0$
\begin{equation}\label{eq:opSTFTsubest2}
\begin{aligned}
& \int_{\Omega_2'} | V_\Phi K (y,z,\eta,-\theta) | \, |V_\fy \fy ( \lambda^t x-y, \lambda^s \xi-\eta) | \, | V_{\overline \fy} u(z,\theta) | \, \dd y \, \dd z \, \dd \eta \, \dd \theta \\
& \lesssim 
\int_{\Omega_2'} 
e^{r_1 \left( |(y,z)|^{\frac1t} + |( \eta,\theta )|^{\frac1s} \right) 
- r \delta^{-1} \kappa \left( |\lambda^t x-y|^{\frac1t} + |\lambda^s \xi-\eta|^{\frac1s} \right) } 
\, | V_{\overline \fy} u(z,\theta) | \, \dd y \, \dd z \, \dd \eta \, \dd \theta \\
& \leqs
e^{- r \delta^{-1} \lambda \left( |x|^{\frac1t} + |\xi|^{\frac1s} \right) }
\int_{\Omega_2'} 
e^{r_1 \left( |(y,z)|^{\frac1t} + |( \eta,\theta )|^{\frac1s} \right) 
+ r \delta^{-1}  \kappa \left( |y|^{\frac1t} + |\eta|^{\frac1s} \right) } 
\, | V_{\overline \fy} u(z,\theta) | \, \dd y \, \dd z \, \dd \eta \, \dd \theta \\
& \leqs
e^{- r \lambda }
\int_{\Omega_2'} 
e^{- \left( |(y,z)|^{\frac1t} + |( \eta,\theta )|^{\frac1s} \right) }
\, e^{ (1 + r_1) \kappa \left( |y|^{\frac1t} + |z|^{\frac1t} + |\eta|^{\frac1s} + |\theta|^{\frac1s} \right) 
+ r \delta^{-1}  \kappa \left( |y|^{\frac1t} + |\eta|^{\frac1s} \right) } 
\, | V_{\overline \fy} u(z,\theta) | \, \dd y \, \dd z \, \dd \eta \, \dd \theta \\
& \leqs
e^{- r \lambda }
\sup_{(z,\theta) \in \rr {2d} \setminus \Gamma_2}
e^{  \kappa \left( (1 + r_1) (1+c) + r \delta^{-1} c \right) \left( |z|^{\frac1t} + |\theta|^{\frac1s} \right) } 
\, | V_{\overline \fy} u(z,\theta) |
\int_{\rr {4d}} 
e^{- \left( |(y,z)|^{\frac1t} + |( \eta,\theta )|^{\frac1s} \right) }
\, \dd y \, \dd z \, \dd \eta \, \dd \theta \\
& \lesssim 
e^{- r \lambda }. 
\end{aligned}
\end{equation}

Finally we need to estimate the integral over $(y,z,\eta, - \theta) \in \Omega_0$. 
Then $(y,\eta) \in \rr {2d} \setminus \overline{\wt \Gamma}_{z_0, 2 \ep}$. 
Hence
\begin{equation*}
\left| z_0 - \left( \lambda^{-t} y, \lambda^{-s} \eta \right) \right| \geqs 2 \ep \quad \forall \lambda > 0 \quad \forall (y,\eta) \in \rr {2d} \setminus \overline{\wt \Gamma}_{z_0, 2 \ep}  
\end{equation*}
and we have for $(x,\xi) \in z_0 + \rB_\ep$
\begin{equation*}
\left| (x,\xi) - \left( \lambda^{-t} y, \lambda^{-s} \eta \right) \right| \geqs \ep \quad \forall \lambda > 0 \quad \forall (y,\eta) \in \rr {2d} \setminus \overline{\wt \Gamma}_{z_0, 2 \ep}. 
\end{equation*}

It follows that there exists $\alpha > 0$ such that 
for $\lambda \geqs 1$, $(x,\xi) \in z_0 + \rB_\ep$ and 
$(y,\eta) \in \rr {2d} \setminus \overline{\wt \Gamma}_{z_0, 2 \ep}$ we have
\begin{equation*}
|\lambda^t x-y|^{\frac1t} + |\lambda^s \xi-\eta|^{\frac1s}
= \lambda \left( | x -  \lambda^{-t} y |^{\frac1t} +  | \xi - \lambda^{-s} \eta |^{\frac1s} \right) 
\geqs \lambda \alpha. 
\end{equation*}

Together with \eqref{eq:Gamma1} and \eqref{eq:STFTGFstdistr} this gives if $(x,\xi) \in (x_0, \xi_0) + \rB_\ep$ 
for some $r_1, r_2 > 0$ and any $r_3, r_4 > 0$
\begin{equation}\label{eq:opSTFTsubest3}
\begin{aligned}
& \int_{\Omega_0'} | V_\Phi K (y,z,\eta,-\theta) | \, |V_\fy \fy ( \lambda^t  x-y, \lambda^s \xi-\eta) | \, | V_{\overline \fy} u(z,\theta) | \, \dd y \, \dd z \, \dd \eta \, \dd \theta \\
& \lesssim \int_{\Omega_0'} 
e^{r_1 \left( |(y,z)|^{\frac1t} + |( \eta,\theta )|^{\frac1s} \right) + r_2 \left( |z|^{\frac1t} + |\theta|^{\frac1s} \right)}
\, e^{- \left( \frac{r_3}{\alpha} + r_4 \kappa \right) \left( |\lambda^t x-y|^{\frac1t} + |\lambda^s \xi-\eta|^{\frac1s} \right) } 
\, \dd y \, \dd z \, \dd \eta \, \dd \theta \\
& \leqs
e^{- r_3 \lambda} 
\int_{\Omega_0'} 
e^{-\left( |(y,z)|^{\frac1t} + |( \eta,\theta )|^{\frac1s} \right) + (1 + r_1 + r_2) \kappa \left( |y|^{\frac1t} + |z|^{\frac1t} + |\eta|^{\frac1s} + |\theta|^{\frac1s}\right)}
\, e^{- r_4 \kappa \left( |\lambda^t x-y|^{\frac1t} + |\lambda^s \xi-\eta|^{\frac1s} \right) } 
\, \dd y \, \dd z \, \dd \eta \, \dd \theta \\
& \leqs
e^{- r_3 \lambda + r_4 \kappa \lambda \left( |x|^{\frac1t} + |\xi|^{\frac1t}\right) }
\int_{\Omega_0'} 
e^{-\left( |(y,z)|^{\frac1t} + |( \eta,\theta )|^{\frac1s} \right) + (1 + r_1 + r_2) (1+c)\kappa \left( |y|^{\frac1t} + |\eta|^{\frac1s} \right)}
\, e^{- r_4 \left( |y|^{\frac1t} + |\eta|^{\frac1s} \right) } 
\, \dd y \, \dd z \, \dd \eta \, \dd \theta \\
& \leqs
e^{- \lambda (r_3 - r_4 \kappa \Delta ) }
\int_{\Omega_0'} 
e^{-\left( |(y,z)|^{\frac1t} + |( \eta,\theta )|^{\frac1s} \right) + \left( (1 + r_1 + r_2) (1+c) \kappa - r_4 \right)\left( |y|^{\frac1t} + |\eta|^{\frac1s} \right)}
\, \dd y \, \dd z \, \dd \eta \, \dd \theta \\
& \lesssim 
e^{- r \lambda }
\end{aligned}
\end{equation}
if we first pick $r_4 \geqs (1 + r_1 + r_2) (1+c)\kappa$ and then $r_3 \geqs r + r_4 \kappa \Delta$. 

Combining \eqref{eq:opSTFTsubest1}, \eqref{eq:opSTFTsubest2} and \eqref{eq:opSTFTsubest3}
and taking into account \eqref{eq:Gamma1union}, we have by \eqref{eq:opSTFTest1} shown
\begin{equation*}
\sup_{(x,\xi) \in (x_0, \xi_0) + \rB_\ep, \ \lambda > 0} e^{r \lambda} | V_\fy(\cK u) ( \lambda^t x, \lambda^s \xi) | 
< + \infty \quad \forall r > 0
\end{equation*}
which finally proves the claim $z_0 \notin \WF^{t,s} (\cK u)$. 
\end{proof}

%%%%%%%%%%%%%%%%%%%%%%%%%%%%%%%%%%%%%%%%%%%%%%%%%%%%%%%
\section{The $t,s$-Gelfand--Shilov wave front set of oscillatory functions}\label{sec:chirp}
%%%%%%%%%%%%%%%%%%%%%%%%%%%%%%%%%%%%%%%%%%%%%%%%%%%%%%%

An important reason for the introduction of the $t,s$-Gelfand--Shilov anisotropic wave front set is that it describes accurately the phase space singularities of oscillatory functions known generically as chirp signals. 

Let $\fy: \rr d \to \ro$ be a real polynomial of order $m \geqs 2$
\begin{equation}\label{eq:phasefunction}
\fy (x) = \fy_m(x) + p(x)
\end{equation}
where 
\begin{equation}\label{eq:polynomial1}
p (x) = \sum_{0 \leqs |\alpha| < m} c_\alpha x^\alpha, \quad c_\alpha \in \ro, 
\end{equation}
and 
\begin{equation}\label{eq:principalpart}
\fy_m(x) = \sum_{|\alpha| = m} c_\alpha x^\alpha, \quad c_\alpha \in \ro, \quad \exists \alpha \in \nn d: \  |\alpha| = m, \ c_\alpha \in \ro \setminus 0, 
\end{equation}
is the principal part. 

In \cite{Rodino3} we investigate the $t,s$-Gelfand--Shilov wave front set of chirp functions defined on $\ro$.
Here we generalize this into the domain $\rr d$. 
Thus we study chirp functions of the form 
\begin{equation}\label{eq:chirpdef}
u(x) = e^{i \fy(x)}, \quad x \in \rr d. 
\end{equation}

First we note that for any $\lambda > 0$, any $t > 0$ and any $1 \leqs j \leqs d$ we have 
\begin{equation}\label{eq:phasederivative1}
\lambda^{- t m} \partial_j \left( \fy(\lambda^t y) \right)
= \partial_j \fy_m (y) + \lambda^{t(1-m)} \partial_j p(\lambda^t y) 
\end{equation}
and if $|y| \leqs R$ and $\lambda \geqs 1$ then 
\begin{equation}\label{eq:polderivative1}
\lambda^{t(1-m)} | \partial_j p(\lambda^t y) |
= \left| \sum_{0 \leqs |\alpha| \leqs m-1} \alpha_j c_\alpha y^{\alpha-e_j} \lambda^{t (|\alpha| - m)} \right|
\leqs C_R \lambda^{-t}. 
\end{equation}

The following result generalizes \cite[Theorem~4.2~(i)]{Rodino3} and shows that only the principal part $\fy_m(x)$ of $\fy$ 
is recorded in $\WF^{t,t(m-1)} (u)$, and the wave front set is contained in the $(m-1)$-conic set in phase space which is  
the graph of its gradient, that is $0 \neq x \mapsto ( x , \nabla \fy_m (x))$. 
The gradient of the phase function is known as the instantaneous frequency \cite{Boggiatto1}.

\begin{thm}\label{thm:chirpWFGS}
If $m \geqs 2$, $\fy$ is a real polynomial defined by \eqref{eq:phasefunction}, \eqref{eq:polynomial1}, \eqref{eq:principalpart},
$u$ is defined by \eqref{eq:chirpdef}, 
and $t > \frac1{m-1}$ then 
\begin{equation}\label{eq:chirpconclusion1}
\WF^{t,t(m-1)} (u) \subseteq \{ (x, \nabla \fy_m (x) ) \in \rr {2d}: \ x \neq 0 \}. 
\end{equation}
If $d = 1$ and $\fy$ is even or odd then 
\begin{equation}\label{eq:chirpconclusion2}
\WF^{t,t(m-1)} (u)
= \{ (x, \fy_m'(x) ) \in \rr 2: \ x \neq 0 \}. 
\end{equation}
\end{thm}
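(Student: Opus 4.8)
Set $s=t(m-1)$. Since $t>\frac1{m-1}$ we have $s>1$, hence $t+s=tm>1$, so $\Sigma_t^s(\rr d)\neq\{0\}$; moreover $u=e^{i\fy}\in\cS'(\rr d)\subseteq(\Sigma_t^s)'(\rr d)$, so $\WF^{t,s}(u)$ is defined. Fix a window $\psi\in\Sigma_t^s(\rr d)\setminus 0$. Because $\nabla\fy_m$ is homogeneous of degree $m-1\geqs1$ we have $\nabla\fy_m(0)=0$, so the complement in $T^*\rr d\setminus0$ of the right-hand side of \eqref{eq:chirpconclusion1} is exactly $\{(x_0,\xi_0)\in T^*\rr d\setminus0:\ \xi_0\neq\nabla\fy_m(x_0)\}$. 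Thus \eqref{eq:chirpconclusion1} reduces to the claim: if $(x_0,\xi_0)\in T^*\rr d\setminus0$ and $\xi_0\neq\nabla\fy_m(x_0)$, then $(x_0,\xi_0)\notin\WF^{t,s}(u)$.

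The first step is a normal form for $V_\psi u$ along the anisotropic curves. Substituting $y=\lambda^t x+w$ in $V_\psi u(\lambda^t x,\lambda^s\xi)$, using $t+s=tm$, Taylor expanding the polynomial $\fy$ around $\lambda^t x$, and invoking the homogeneity $\nabla\fy_m(\lambda^t x)=\lambda^s\nabla\fy_m(x)$ together with \eqref{eq:phasederivative1}--\eqref{eq:polderivative1}, one obtains, for $\lambda\geqs1$,
\begin{equation*}
|V_\psi u(\lambda^t x,\lambda^s\xi)|=\bigl|\,\widehat{G_{\lambda,x}}\bigl(\lambda^s(\xi-\nabla\fy_m(x))\bigr)\,\bigr|,\qquad G_{\lambda,x}(w)=e^{iQ_{\lambda,x}(w)}\overline{\psi(w)},
\end{equation*}
where $Q_{\lambda,x}$ is a real polynomial in $w$ with vanishing constant term (the linear term $\langle\nabla\fy_m(x),w\rangle$ having been absorbed into the frequency shift) whose coefficient of $w^\gamma$ is, uniformly for $x$ in a fixed compact set, bounded by a constant times $\lambda^{t(m-|\gamma|)}$.

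For \eqref{eq:chirpconclusion1}, assume $\xi_0\neq\nabla\fy_m(x_0)$. By continuity --- and using $\nabla\fy_m(0)=0$ when $x_0=0$ --- there are a neighbourhood $U$ of $(x_0,\xi_0)$ and $c>0$ with $|\xi-\nabla\fy_m(x)|\geqs c$ on $U$, so it suffices to show $|\widehat{G_{\lambda,x}}(\zeta)|\lesssim e^{-r\lambda}$ for every $r>0$, uniformly for $x$ in a compact set, $\lambda\geqs1$ and $|\zeta|\geqs c\lambda^s$. I would insert a compactly supported Gevrey cutoff $\chi(w/(\ep\lambda^t))$ of order $\leqs s$, equal to one near the origin. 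Off its support $|\psi(w)|\lesssim e^{-r'|w|^{1/t}}$ for arbitrarily large $r'$, giving a contribution $\lesssim e^{-r\lambda}$ for all $r$. On its support $|w|\lesssim\ep\lambda^t$, so $|\nabla Q_{\lambda,x}(w)|\lesssim(\ep+\lambda^{-t})\lambda^s$ and $\|\nabla^jQ_{\lambda,x}(w)\|\lesssim\lambda^{\,s-t(j-1)}$ for $j\geqs2$; choosing $\ep$ small and $\lambda$ large makes $|\zeta-\nabla Q_{\lambda,x}(w)|\geqs\frac12|\zeta|\geqs\frac c2\lambda^s$, so non-stationary phase applies. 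Integrating by parts $N$ times, each step gains a factor $\lesssim|\zeta|^{-1}\lesssim\lambda^{-s}$, while the derivatives landing on $\overline\psi$ and on $\chi$ produce at most $C^N N!^{\,s}h^N$ with $h>0$ arbitrarily small (this uses that $\psi\in\Sigma_t^s$ supplies the seminorm bound for every $h$); optimizing in $N\approx\lambda$ yields the desired decay $e^{-r\lambda}$ for every prescribed $r$. This uniform-in-$\lambda$ non-stationary-phase estimate is the technical heart of the proof.

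Finally, the equality in the case $d=1$ with $\fy$ even or odd: the inclusion $\subseteq$ is \eqref{eq:chirpconclusion1}, so it remains to place each $(x_0,\fy_m'(x_0))$, $x_0\neq0$, in $\WF^{t,s}(u)$. Evenness or oddness of $\fy$ forces the degree-$(m-1)$ coefficient of $\fy$, hence of $p$, to vanish, which makes the unique bounded critical point $\beta_\lambda$ of $Q_{\lambda,x_0}(\cdot)$ --- it exists because the $w^2$-coefficient, of order $\lambda^{t(m-2)}\fy_m''(x_0)$ with $\fy_m''(x_0)\neq0$ for $x_0\neq0$, dominates --- tend to $0$ as $\lambda\to\infty$. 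Evaluating the normal form on the graph, i.e. at $\xi=\fy_m'(x_0)$, gives $|V_\psi u(\lambda^t x_0,\lambda^s\fy_m'(x_0))|=|\widehat{G_{\lambda,x_0}}(0)|$; after shifting by $\beta_\lambda$ this is an oscillatory integral with large parameter $\sim\lambda^{t(m-2)}$, and stationary phase produces a leading term of modulus $\sim|\fy_m''(x_0)|^{-1/2}\lambda^{-t(m-2)/2}\,|\psi(\beta_\lambda)|$. Choosing the window with $\psi(0)\neq0$ makes $|V_\psi u(\lambda^t x_0,\lambda^s\fy_m'(x_0))|\gtrsim\lambda^{-t(m-2)/2}$ for all large $\lambda$, which is not superexponentially small along the curve, whence $(x_0,\fy_m'(x_0))\in\WF^{t,s}(u)$. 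The even/odd hypothesis enters only to keep $\beta_\lambda$ near the origin; alternatively one may establish membership for one conic branch of the graph and transfer it to the other using the symmetries \eqref{eq:evensteven0}, \eqref{eq:evensteven1} and \eqref{eq:evensteven2}.
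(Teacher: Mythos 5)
Your argument for the inclusion \eqref{eq:chirpconclusion1} is essentially the paper's: both proofs rest on an anisotropic non-stationary-phase estimate in which the phase gradient, after the rescaling $y=\lambda^t x+w$, stays at distance $\gtrsim\lambda^{s}$ from the frequency $\lambda^{s}\xi$ whenever $\xi\neq\nabla\fy_m(x)$, and the superexponential decay then comes from playing the gain $\lambda^{-s}$ per integration by parts against Gevrey/Beurling bounds $C_h h^k k!^s$ with $h>0$ arbitrary (the paper packages this as \cite[Theorem~7.7.1]{Hormander0} in one coordinate $j$ with a compactly supported window in $\Sigma_t^s$, plus \cite[Lemma~4.1]{Rodino3}; you use a general window, a rescaled Gevrey cutoff at scale $\ep\lambda^t$, and optimization in the number $N\approx\lambda$ of integrations by parts — the same mechanism, and both hinge on $s=t(m-1)>1$). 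Where you genuinely diverge is the equality \eqref{eq:chirpconclusion2}: the paper's proof is soft — it only uses that $\WF^{t,s}(u)\neq\emptyset$ (since $|u|\equiv1$, $u\notin\Sigma_t^s$), that in $d=1$ the graph consists of exactly two $\frac{s}{t}$-conic orbits, and that the symmetries \eqref{eq:evensteven0}, \eqref{eq:evensteven1}, \eqref{eq:evensteven2} swap these branches when $\fy$ is even or odd — whereas you prove a quantitative stationary-phase lower bound $|V_\psi u(\lambda^t x_0,\lambda^s\fy_m'(x_0))|\gtrsim\lambda^{-t(m-2)/2}$ along the graph. Your route buys more (it explains why the wave front set actually fills the graph, and it would extend beyond the even/odd hypothesis whenever one can keep $|\psi(\beta_\lambda)|$ bounded below), at the price of technical overhead the paper avoids.

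Two points in that lower bound need attention before it is complete. First, the case $m=2$: there the ``large parameter'' $\lambda^{t(m-2)}$ equals $1$, so there is no asymptotic leading term; you must argue separately (in the even case with $m=2$ the phase after the shift is exactly $c_2w^2$, so the integral is $\lambda$-independent and one only has to pick a window making it nonzero). Second, for $m\geqs3$ the phase depends on $\lambda$, so you need the stationary-phase expansion with error bounds uniform in $\lambda$ (the higher-order coefficients are $O(\lambda^{-t(j-2)})$, so this works, but it must be said), and you must dispose of the other real critical point, located near $w\approx-2\lambda^t x_0$ when $m$ is odd, using the $\Sigma_t^s$-decay or compact support of the window; also note that a nowhere-vanishing Gaussian window is only admissible when $t>\frac12$, so ``choose $\psi(0)\neq0$'' together with $\beta_\lambda\to0$ (which is exactly where you use evenness/oddness, via $c_{m-1}=0$) is the right formulation. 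None of these is a fatal gap, but as written they are gaps in the part of your argument that replaces the paper's two-line symmetry step.
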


\begin{proof}
Set $s = t(m-1) > 1$. 
This implies that there are compactly supported Gevrey functions \cite{Rodino1} of order $s$ in the space $\Sigma_t^s(\rr d)$
which is a crucial ingredient in the proof. 
Set 
\begin{align*}
W 
& = \{ (x, \nabla \fy_m (x) ) \in \rr {2d}: \ x \in \rr d \setminus 0 \} \subseteq T^* \rr d \setminus 0.
\end{align*}
Then $W$ is an $(m-1)$-conic subset in $T^* \rr d \setminus 0$.

Suppose $(x_0, \xi_0) \in \rr {2d} \setminus 0$ and $(x_0, \xi_0) \notin W$. 
Then there exists $1 \leqs j \leqs d$ such that $\xi_{0,j} \neq \partial_j \fy_m (x_0)$. 
Thus there exist an open set $U$ such that $(x_0,\xi_0) \in U$, and $0 < \ep \leqs 1$, $\delta > 0$,
such that
\begin{equation*}
(x,\xi) \in U,  \quad |x-y| \leqs \delta \sqrt{2} \quad \Longrightarrow \quad  |\xi_j - \partial_j \fy_m (x) | \geqs 2 \ep, \quad | \partial_j (\fy_m (x) - \fy_m(y) )| \leqs \frac{\ep}{2}.
\end{equation*}
By \eqref{eq:polderivative1} we have 
\begin{equation*}
\lambda^{t(1-m)} | \partial_j p (\lambda^t y) |
\leqs \frac{\ep}{2}
\end{equation*}
if $(x,\xi) \in U$, $|x-y| \leqs \delta \sqrt{2}$ and $\lambda \geqs L$ where $L \geqs 1$ is sufficiently large. 

Using \eqref{eq:phasederivative1} we obtain if $(x,\xi) \in U$, $|x-y| \leqs \delta \sqrt{2}$ and $\lambda \geqs L$ 
\begin{equation}\label{eq:lowerboundfas}
\left| \xi_j -  \lambda^{-t m} \partial_j \left( \fy( \lambda^t y) \right)  \right| 
\geqs |\xi_j - \partial_j \fy_m (x) | - \left( | \partial_j ( \fy_m (y) - \fy_m (x) ) | + \lambda^{t(1-m)} | \partial_j p (\lambda^t y) | \right)
\geqs \ep.
\end{equation}

Let $\psi \in \Sigma_t^s (\rr d) \setminus 0$ have $\supp \psi \subseteq \rB_\delta$. 
We denote by $y' \in \rr {d-1}$ the vector $y \in \rr d$ except coordinate $j$. 
The stationary phase theorem \cite[Theorem~7.7.1]{Hormander0}
gives, for any $k \in \no$, any $h > 0$, and any $\lambda \geqs L$, if $(x,\xi) \in U$, using \eqref{eq:lowerboundfas} 
and \eqref{eq:expestimate0},
\begin{equation*}
\begin{aligned}
& |V_\psi u ( \lambda^t x, \lambda^{t(m-1)} \xi)| \\
& = (2 \pi)^{-\frac{d}{2}} \left| \int_{\rr d} e^{i ( \fy(y)  - \lambda^{t(m-1)} \la y,  \xi \ra )} \overline{\psi( \lambda^t (\lambda^{-t} y-x) )} \, \dd y \right| \\
& = (2 \pi)^{-\frac{d}{2}} \lambda^{t d} \left| \int_{| x - y | \leqs \delta} e^{i \lambda^{t m} ( \lambda^{-t m} \fy(\lambda^t y) - \la y, \xi \ra )} \overline{\psi( \lambda^t (y-x) )} \, \dd y \right| \\
& \leqs C \lambda^{t d} \int_{ | x' - y' | \leqs \delta } \sum_{n = 0}^k \lambda^{t n} \sup_{| x_j - y_j | \leqs \delta} |(\partial_j^n\psi)( \lambda^t (y-x) )| \, |\xi_j - \lambda^{- t m} \partial_j \left( \fy( \lambda^t y) \right) |^{n - 2k} \\
& \qquad \qquad \qquad \qquad \qquad \qquad \qquad \qquad \qquad \qquad \qquad \qquad \qquad   
\times \lambda^{t m (n-2k)} \, \dd y' \\
& \leqs C \lambda^{t d} \ep^{- 2 k} \int_{ | x' - y' | \leqs \delta } \sum_{n = 0}^k \sup_{|x_j-y_j| \leqs \delta} |(\partial_j^n\psi)( \lambda^t (y-x) )| \, \dd y' 
\lambda^{- t k (m-1)} \lambda^{t(1+m) (n-k)} \\
& \leqs C_h \lambda^{t d} \ep^{- 2 k} \lambda^{- s k} \sum_{n = 0}^k   h^n n!^s  \\
& = C_h \lambda^{t d} \ep^{- 2 k} \lambda^{- s k} h^k \sum_{n = 0}^k   
h^{-(k-n)} n!^s \\
& \leqs C_h \lambda^{t d} \ep^{- 2 k} \lambda^{- s k} h^k e^{s h^{-\frac1s}} \sum_{n = 0}^k  (n! (k-n)!)^s \\
& \leqs C_{s,h}  \lambda^{t d} \ep^{- 2 k} \lambda^{- s k} (2 h)^k  k!^s . 
\end{aligned}
\end{equation*}
Since $h > 0$ is arbitrary
we obtain
\begin{equation}\label{eq:STFTestimate0}
\lambda^{s k} \ep^{2k} | V_\psi u (\lambda^t x, \lambda^s \xi) | 
\leqs C_h \lambda^{t d} h^k k!^s, \quad (x,\xi) \in U, 
\end{equation}
for all $h > 0$, all $\lambda \geqs L$ and all $k \in \no$. 
Appealing to \cite[Lemma~4.1]{Rodino3} we may conclude that 
that $(x_0,\xi_0) \notin \WF^{t,t(m-1)} (u)$ and the inclusion \eqref{eq:chirpconclusion1} follows. 

Next let $d = 1$. 
If $\fy$ is even then $u$ is even, and $W = -W$ since $m$ is even, so by \eqref{eq:evensteven1} we have either $\WF^{t,t(m-1)} (u) =\emptyset$ or $\WF^{t,t(m-1)} (u) = W$. 
The former is not true since $u \notin \Sigma_t^s(\rr d)$. Thus we have proved \eqref{eq:chirpconclusion2} when $\fy$ is even. 

If $\fy$ is odd then $m$ is odd and $\check u (x) = \overline{ u(x) } = e^{- i  \fy(x)}$. 
Again $\WF^{t,t(m-1)} (u) =\emptyset$ cannot hold since $u \notin \Sigma_t^s(\rr d)$. 
If we assume that the inclusion \eqref{eq:chirpconclusion1} is strict we  get a contradiction from 
\eqref{eq:evensteven0} and \eqref{eq:evensteven2}. 
Indeed suppose e.g. 
\begin{equation*}
\WF^{t,t(m-1)} (u) 
= \{ (x, \fy_m'(x) ) \in \rr 2: \ x > 0 \}. 
\end{equation*}
By \eqref{eq:evensteven0} and \eqref{eq:evensteven2} we then get the contradiction 
\begin{align*}
\WF^{t,t(m-1)} ( \check u) 
& = \{ (x, - \fy_m'(x) ) \in \rr 2: \ x < 0 \} \\
& = \{ (x, - \fy_m'(x) ) \in \rr 2: \ x > 0 \}
= \WF^{t,t(m-1)} ( \overline{u} ).   
\end{align*}
This proves \eqref{eq:chirpconclusion2} when $\fy$ is odd. 
\end{proof}

We would also like to determine $\WF^{t,s} (u)$ when $s \neq t(m-1)$. 
The following two results treat this question. 

\begin{prop}\label{prop:chirpnegative1}
If $m \geqs 2$, $\fy$ is a real polynomial defined by 
\eqref{eq:phasefunction}, \eqref{eq:polynomial1}, \eqref{eq:principalpart},
$u$ is defined by \eqref{eq:chirpdef}, 
and $s > t(m-1) > 1$
then 
\begin{equation}\label{eq:chirpconclusion3}
\WF^{t,s} (u) \subseteq (\rr d \setminus 0) \times \{ 0 \}. 
\end{equation}
If $d = 1$ and $\fy$ is even or odd then 
\begin{equation}\label{eq:chirpconclusion4}
\WF^{t,s} (u) 
= (\ro \setminus 0) \times \{ 0 \}.
\end{equation}
\end{prop}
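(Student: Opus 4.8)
The plan is to follow the scheme of the proof of Theorem~\ref{thm:chirpWFGS}, the new feature being that the frequency is now dilated by $\lambda^s$ with $s>t(m-1)$, so that the polynomial phase $\fy$ becomes \emph{subdominant} in the relevant oscillatory integral and only the condition $\xi\neq0$ survives. Since $s>t(m-1)>1$, the space $\Sigma_t^s(\rr d)$ contains non-trivial compactly supported functions; fix such a $\psi\in\Sigma_t^s(\rr d)\setminus0$ with $\supp\psi\subseteq\rB_\delta$, where $\delta>0$ is chosen below. To prove \eqref{eq:chirpconclusion3} it suffices to show that $(x_0,\xi_0)\notin\WF^{t,s}(u)$ whenever $\xi_0\neq0$. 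Fix $j$ with $\xi_{0,j}\neq0$, a bounded open neighbourhood $U\subseteq T^*\rr d\setminus0$ of $(x_0,\xi_0)$, and $0<\ep\leqs1$ with $|\xi_j|\geqs2\ep$ for all $(x,\xi)\in U$.

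Expanding $V_\psi u(\lambda^t x,\lambda^s\xi)$ as an integral and substituting $y\mapsto\lambda^t y$ gives, for $\lambda\geqs1$ and $(x,\xi)\in U$,
\[
V_\psi u(\lambda^t x,\lambda^s\xi)=(2\pi)^{-\frac d2}\lambda^{td}\int_{|y-x|\leqs\delta}e^{i\lambda^{s+t}\Phi_\lambda(y)}\,\overline{\psi(\lambda^t(y-x))}\,\dd y,\qquad \Phi_\lambda(y)=\lambda^{-(s+t)}\fy(\lambda^t y)-\la y,\xi\ra,
\]
where $\lambda^{s+t}$ is the governing large parameter precisely because $s+t>tm$. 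Writing $\fy=\sum_{0\leqs k\leqs m}\fy_k$ with $\fy_k$ the $k$-homogeneous part ($\fy_m$ the principal part), we have $\lambda^{-(s+t)}\fy(\lambda^t y)=\sum_k\lambda^{tk-s-t}\fy_k(y)$ with all exponents $tk-s-t\leqs tm-s-t<0$; hence $\Phi_\lambda\to-\la y,\xi\ra$ together with all its derivatives uniformly on bounded sets. In particular each $\partial^\beta\Phi_\lambda$ is bounded uniformly in $\lambda\geqs1$ on $\{|y-x|\leqs\delta,\ (x,\xi)\in U\}$, and there is $L\geqs1$ such that $|\partial_j\Phi_\lambda(y)|=|\lambda^{-s}(\partial_j\fy)(\lambda^t y)-\xi_j|\geqs|\xi_j|-\ep\geqs\ep$ there for all $\lambda\geqs L$. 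Feeding this into the stationary phase theorem \cite[Theorem~7.7.1]{Hormander0} and integrating by parts in $y_j$ exactly as in the proof of Theorem~\ref{thm:chirpWFGS}, but with large parameter $\lambda^{s+t}$ in place of $\lambda^{tm}$ and with the contribution of $\fy_m$ absorbed into the negligible part of $\Phi_\lambda$ rather than into the gradient bound, one obtains, for every $k\in\no$ and $h>0$, the bound
\[
|V_\psi u(\lambda^t x,\lambda^s\xi)|\leqs C_h\,\lambda^{td}\sum_{n=0}^k\lambda^{tn}\,\ep^{n-2k}\,\lambda^{(s+t)(n-2k)}\,h^n n!^s,\qquad(x,\xi)\in U,\ \lambda\geqs L,
\]
using that $\psi\in\Sigma_t^s(\rr d)$ yields $\|\partial_j^n\psi\|_{L^\infty}\leqs C_h h^n n!^s$ for every $h>0$, together with \eqref{eq:expestimate0}. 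The $\lambda$-exponent $tn+(s+t)(n-2k)=(s+2t)n-2k(s+t)$ is increasing in $n$ and equals $-sk$ at $n=k$, hence is $\leqs-sk$ for $0\leqs n\leqs k$ and $\lambda\geqs1$; summing the Gevrey factors as in Theorem~\ref{thm:chirpWFGS} gives $\lambda^{sk}\ep^{2k}|V_\psi u(\lambda^t x,\lambda^s\xi)|\leqs C_{s,h}\lambda^{td}(2h)^k k!^s$ for all $h>0$, $k\in\no$, $(x,\xi)\in U$, $\lambda\geqs L$. By \cite[Lemma~4.1]{Rodino3} this gives $(x_0,\xi_0)\notin\WF^{t,s}(u)$, proving \eqref{eq:chirpconclusion3}. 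I expect this oscillatory integral estimate — recognising $\lambda^{s+t}$ as the correct large parameter and checking that the exponent bookkeeping closes exactly at $n=k$ — to be the main (and essentially the only nontrivial) point, and it is where the hypothesis $s>t(m-1)$ is used.

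For the equality \eqref{eq:chirpconclusion4} with $d=1$ and $\fy$ even or odd, note that $u=e^{i\fy}\notin\Sigma_t^s(\ro)$ since $|u|\equiv1$, so $\WF^{t,s}(u)\neq\emptyset$ by \eqref{eq:STFTGFstfunc}. By \eqref{eq:chirpconclusion3}, $\WF^{t,s}(u)$ is a non-empty, closed, $\tfrac st$-conic subset of $(\ro\setminus0)\times\{0\}$, whose only $\tfrac st$-conic orbits are the rays $\{(x,0):x>0\}$ and $\{(x,0):x<0\}$; hence $\WF^{t,s}(u)$ equals one of these rays or all of $(\ro\setminus0)\times\{0\}$. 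If $\fy$ is even then $u$ is even, so $\check u=u$ and \eqref{eq:evensteven1} yields $\WF^{t,s}(u)=-\WF^{t,s}(u)$; as $(x,0)\mapsto(-x,0)$ interchanges the two rays, $\WF^{t,s}(u)=(\ro\setminus0)\times\{0\}$. If $\fy$ is odd then $\check u(x)=e^{i\fy(-x)}=e^{-i\fy(x)}=\overline{u(x)}$, and \eqref{eq:evensteven0}, \eqref{eq:evensteven2} together with the window independence of the wave front set give $-\WF^{t,s}(u)=\{(x,-\xi):(x,\xi)\in\WF^{t,s}(u)\}$; restricted to the line $\xi=0$ this says $\WF^{t,s}(u)$ is invariant under $(x,0)\mapsto(-x,0)$, which again forces $\WF^{t,s}(u)=(\ro\setminus0)\times\{0\}$. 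These symmetry arguments are routine once \eqref{eq:chirpconclusion3} is established.
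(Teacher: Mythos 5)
Your proposal is correct and follows essentially the same route as the paper: a non-stationary phase estimate in the $y_j$ variable with large parameter $\lambda^{t+s}$, the gradient lower bound $|\xi_j-\lambda^{-t-s}\partial_j(\fy(\lambda^t y))|\geqs\ep$ coming precisely from $s>t(m-1)$, the same exponent bookkeeping closing at $n=k$, and the conclusion via \cite[Lemma~4.1]{Rodino3}. The symmetry argument you give for \eqref{eq:chirpconclusion4} is exactly the one the paper invokes by reference to the proof of Theorem~\ref{thm:chirpWFGS}, just written out in more detail.
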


\begin{proof}
Suppose $(x_0, \xi_0) \in T^* \rr d$ and $\xi_0 \neq 0$, that is $\xi_{0,j} \neq 0$ for some $1 \leqs j \leqs d$. 
From \eqref{eq:phasederivative1} we obtain
\begin{equation*}
\lambda^{-t-s} \partial_j \left( \fy( \lambda^t y) \right)
= \lambda^{t (m-1)-s} \left(  \partial_j \fy_m (y) + \lambda^{t(1-m)} \partial_j p(\lambda^t y) \right).
\end{equation*}
Thus from $s > t(m-1)$, using \eqref{eq:polderivative1},
it follows that there exist $U \subseteq \rr {2d}$ such that $(x_0, \xi_0) \in U$, 
and $0 < \ep \leqs 1$, 
$L \geqs 1$ such that 
\begin{equation*}
| \xi_j - \lambda^{-t-s} \partial_j \left( \fy( \lambda^t y) \right)| \geqs \ep
\end{equation*}
when $(x,\xi) \in U$, $| x - y| \leqs \sqrt 2$ and $\lambda \geqs L$. 

Let $\psi \in \Sigma_t^s(\rr d) \setminus 0$ be such that $\supp \psi \subseteq \rB_1$. 
The stationary phase theorem \cite[Theorem~7.7.1]{Hormander0}
yields, for any $k \in \no$, any $h > 0$, and any $\lambda \geqs L$, if $(x,\xi) \in U$, 
\begin{equation*}
\begin{aligned}
|V_\psi u ( \lambda^t x, \lambda^s \xi)|
& = (2 \pi)^{-\frac{d}{2}} \left| \int_{\rr d} e^{i ( \fy(y)  - \lambda^s \la y, \xi \ra )} \overline{\psi( \lambda^t (\lambda^{-t} y-x) )} \, \dd y \right| \\
& = (2 \pi)^{-\frac{d}{2}} \lambda^{t d} \left| \int_{\rr d} e^{i \lambda^{t+s} ( \lambda^{-t-s} \fy(\lambda^t y) - \la y, \xi \ra )} \overline{\psi( \lambda^t (y-x) )} \, \dd y \right| \\
& \leqs C \lambda^{t d} \int_{ | x' - y' | \leqs 1} \sum_{n = 0}^k \lambda^{t n} \sup_{|x_j-y_j| \leqs 1} |(\partial_j^n\psi)( \lambda^t (y-x) )| \, |\xi_j - \lambda^{-t-s} \partial_j \left( \fy( \lambda^t y) \right) |^{n - 2k} \\
& \qquad \qquad \qquad \qquad \qquad \times \lambda^{(t+s) (n-2k)} \, \dd y' \\
& \leqs C_h \lambda^{t d} \ep^{- 2 k} \lambda^{- s k} \sum_{n = 0}^k   h^n n!^s \lambda^{(s+2t)(n-k)} \\
& \leqs C_h \lambda^{t d} \ep^{- 2 k} \lambda^{- s k} \sum_{n = 0}^k   h^n n!^s \\
& \leqs C_{s,h}  \lambda^{t d} \ep^{- 2 k} \lambda^{- s k} (2 h)^k  k!^s . 
\end{aligned}
\end{equation*}
Again using \cite[Lemma~4.1]{Rodino3} we may conclude that 
that $(x_0,\xi_0) \notin \WF^{t,s} (u)$ and the inclusion \eqref{eq:chirpconclusion3} follows. 

When $d = 1$ and $\fy$ is either even or odd then \eqref{eq:chirpconclusion4} follows as in the proof of Theorem \ref{thm:chirpWFGS}. 
\end{proof}

In our final result on the $t,s$-Gelfand--Shilov wave front set of a chirp function we strengthen the assumption on the polynomial $\fy_m$ to be elliptic. 

\begin{prop}\label{prop:chirpnegative2}
Let $m \geqs 2$, 
let $\fy$ be a real polynomial defined by \eqref{eq:phasefunction}, \eqref{eq:polynomial1}, \eqref{eq:principalpart}, 
suppose $\fy_m (x) \neq 0$ for all $x \in \rr d \setminus 0$, 
and let $u$ be defined by \eqref{eq:chirpdef}.
If $t(m-1) > s > 1$ then
\begin{equation}\label{eq:chirpconclusion5}
\WF^{t,s} (u) \subseteq  \{ 0 \} \times (\rr d \setminus 0). 
\end{equation}
If $d = 1$ and $\fy$ is even then 
\begin{equation}\label{eq:chirpconclusion6}
\WF^{t,s} (u) 
= \{ 0 \} \times (\ro \setminus 0).
\end{equation}
\end{prop}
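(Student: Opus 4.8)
The plan is to follow the proofs of Theorem~\ref{thm:chirpWFGS} and Proposition~\ref{prop:chirpnegative1}, the only structural change being the choice of the large parameter in the oscillatory integral, now dictated by $t(m-1)>s$ rather than by the opposite inequality, together with the use of the ellipticity hypothesis to control $\nabla\fy_m$. To prove \eqref{eq:chirpconclusion5} I would note that the complement of $\{0\}\times(\rr d\setminus0)$ in $T^*\rr d\setminus0$ is $\{(x_0,\xi_0):x_0\neq0\}$, so it suffices to fix $(x_0,\xi_0)$ with $x_0\neq0$ and show $(x_0,\xi_0)\notin\WF^{t,s}(u)$. Ellipticity of $\fy_m$ forces $\nabla\fy_m(x_0)\neq0$ — indeed by homogeneity $x_0\cdot\nabla\fy_m(x_0)=m\,\fy_m(x_0)\neq0$ — so $\partial_j\fy_m(x_0)\neq0$ for some $1\leqs j\leqs d$. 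By continuity I choose a small $\delta>0$ and an open neighbourhood $U$ of $(x_0,\xi_0)$ so that $|\partial_j\fy_m(y)|\geqs c>0$ whenever $(x,\xi)\in U$ and $|x-y|\leqs\delta$, and a window $\psi\in\Sigma_t^s(\rr d)\setminus0$ with $\supp\psi\subseteq\rB_\delta$ — possible because $s>1$, so nonzero compactly supported Gevrey functions of order $s$ exist and belong to $\Sigma_t^s(\rr d)$ for every $t>0$.

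After the substitution $y\mapsto\lambda^t y$ one has, for $\lambda\geqs1$,
\begin{equation*}
V_\psi u(\lambda^t x,\lambda^s\xi)=(2\pi)^{-d/2}\lambda^{td}\int_{|x-y|\leqs\delta} e^{i\lambda^{tm}\Psi_\lambda(y)}\,\overline{\psi(\lambda^t(y-x))}\,\dd y,\qquad \Psi_\lambda(y)=\lambda^{-tm}\fy(\lambda^t y)-\lambda^{t+s-tm}\la y,\xi\ra ,
\end{equation*}
and by homogeneity of $\fy_m$ together with $\deg(\fy-\fy_m)<m$, uniformly for $(x,\xi)\in U$, $|x-y|\leqs\delta$, $\lambda\geqs1$,
\begin{equation*}
\partial_j\Psi_\lambda(y)=\partial_j\fy_m(y)+O(\lambda^{-t})+O(\lambda^{t+s-tm}),\qquad \partial_j^l\Psi_\lambda(y)=\partial_j^l\fy_m(y)+O(\lambda^{-t})\quad(l\geqs2).
\end{equation*}
\emph{Here the hypothesis $s<t(m-1)$ enters}, making $t+s-tm<0$: hence $|\partial_j\Psi_\lambda(y)|\geqs c/2$ and all higher $y_j$-derivatives of $\Psi_\lambda$ stay bounded for $\lambda\geqs L$ with $L$ large. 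This is the non-stationary situation with large parameter $\lambda^{tm}$, so $2k$-fold integration by parts in $y_j$ (the stationary phase theorem \cite[Theorem~7.7.1]{Hormander0}), combined with the bounds $|\partial_j^n\psi|\leqs C h^nn!^s$ valid for all $h>0$ and with \eqref{eq:expestimate0}, yields exactly as in the proof of Theorem~\ref{thm:chirpWFGS} an estimate of the form
\begin{equation*}
\lambda^{sk}\,|V_\psi u(\lambda^t x,\lambda^s\xi)|\leqs C_{s,h}\,\lambda^{td}\,(2h)^k\,k!^{\,s},\qquad (x,\xi)\in U,\ \lambda\geqs L,\ k\in\no,
\end{equation*}
for every $h>0$; at the last step one uses $s\leqs t(m-1)$ once more to convert the $\lambda$-decay $\lambda^{-t(m-1)k}$ produced by the integrations by parts into the required $\lambda^{-sk}$. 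Since $h>0$ is arbitrary, \cite[Lemma~4.1]{Rodino3} gives $(x_0,\xi_0)\notin\WF^{t,s}(u)$, and \eqref{eq:chirpconclusion5} follows.

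For the equality \eqref{eq:chirpconclusion6}, with $d=1$ and $\fy$ even, I would argue as follows. First $u=e^{i\fy}\notin\Sigma_t^s(\ro)$ because $|u|\equiv1$, so by the characterisation following Definition~\ref{def:wavefrontGFst} we have $\WF^{t,s}(u)\neq\emptyset$; by the same remark $\WF^{t,s}(u)$ is closed and $\tfrac st$-conic, and by \eqref{eq:chirpconclusion5} it is contained in $\{0\}\times(\ro\setminus0)$; and since $\check u=u$, relation \eqref{eq:evensteven1} shows it is invariant under $(x,\xi)\mapsto(-x,-\xi)$. A nonempty closed $\tfrac st$-conic subset of $\{0\}\times(\ro\setminus0)$ invariant under negation is all of $\{0\}\times(\ro\setminus0)$: containing one point $(0,\xi_1)$ with $\xi_1\neq0$ it contains $(0,\mu\xi_1)$ for all $\mu>0$ by $\tfrac st$-conicity, and for all $\mu<0$ by the symmetry. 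This gives \eqref{eq:chirpconclusion6}.

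The main obstacle is the first part: identifying the correct large parameter and carrying the non-stationary phase estimate through. The point is that under $t(m-1)>s$ the dominant contribution to $\partial_{y_j}\big(\fy(\lambda^t y)-\lambda^{t+s}\la y,\xi\ra\big)$ comes from the principal part, $\lambda^{tm}\partial_j\fy_m(y)$, and not from the frequency term $\lambda^{t+s}\xi_j$ — this is the regime opposite to that of Proposition~\ref{prop:chirpnegative1}; ellipticity of $\fy_m$ is exactly what guarantees $\partial_j\fy_m(x_0)\neq0$ for some $j$ at every $x_0\neq0$; and the inequality $t(m-1)>s$ is used twice, first to let $\lambda^{tm}$ dominate and then to turn the resulting decay into the $\lambda^{-sk}$ needed for \cite[Lemma~4.1]{Rodino3}.
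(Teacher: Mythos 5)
Your proposal is correct and follows essentially the same route as the paper: use ellipticity of $\fy_m$ together with Euler's homogeneous function theorem to get $\partial_j \fy_m(x_0) \neq 0$, rescale so the phase has large parameter $\lambda^{tm}$, observe that $t+s-tm<0$ makes the frequency term subdominant so the phase is non-stationary, apply \cite[Theorem~7.7.1]{Hormander0} with a compactly supported Gevrey window in $\Sigma_t^s$ and convert the resulting $\lambda^{-t(m-1)k}$ decay into $\lambda^{-sk}$ via $s \leqs t(m-1)$, then invoke \cite[Lemma~4.1]{Rodino3}. Your argument for \eqref{eq:chirpconclusion6} (nonemptiness since $|u|\equiv 1$, plus $\tfrac{s}{t}$-conicity and the evenness symmetry \eqref{eq:evensteven1}) is exactly the argument the paper abbreviates by referring back to the proof of Theorem~\ref{thm:chirpWFGS}.
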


\begin{proof}
Suppose $(x_0, \xi_0) \in T^* \rr d$ and $x_0 \neq 0$. 
The assumption $\fy_m (x) \neq 0$ for all $x \in \rr d \setminus 0$ and Euler's homogeneous function theorem 
imply that $\nabla \fy_m (x_0) \neq 0$, that is $\partial_j \fy_m (x_0) \neq 0$ for some $1 \leqs j \leqs d$. 
From \eqref{eq:phasederivative1} and \eqref{eq:polderivative1}
and $t(m-1) > s > 1$
it follows that there exist $U \subseteq \rr {2d}$ such that $(x_0, \xi_0) \in U$, 
$1 \leqs j \leqs d$
and $0 < \ep \leqs 1$, 
$L \geqs 1$ such that 
\begin{equation*}
| \lambda^{t + s - t m} \xi_j - \lambda^{-t m} \partial_j \left( \fy( \lambda^t y) \right)| \geqs \ep
\end{equation*}
when $(x,\xi) \in U$, $| x - y| \leqs \ep \sqrt 2$ and $\lambda \geqs L$. 

Let $\psi \in \Sigma_t^s(\rr d) \setminus 0$ be such that $\supp \psi \subseteq \rB_\ep$. 
Again by the stationary phase theorem \cite[Theorem~7.7.1]{Hormander0}
we obtain, for any $k \in \no$, any $h > 0$, and any $\lambda \geqs L$, if $(x,\xi) \in U$, 
\begin{equation*}
\begin{aligned}
|V_\psi u ( \lambda^t x, \lambda^s \xi)|
& = (2 \pi)^{-\frac{d}{2}} \left| \int_{\rr d} e^{i ( \fy(y)  - \lambda^s \la y, \xi \ra )} \overline{\psi( \lambda^t (\lambda^{-t} y-x) )} \, \dd y \right| \\
& = (2 \pi)^{-\frac{d}{2}} \lambda^{t d} \left| \int_{\rr d} e^{i \lambda^{t m} ( \lambda^{-t m} \fy(\lambda^t y) - \lambda^{t(1-m)+s} \la y, \xi \ra )} \overline{\psi( \lambda^t (y-x) )} \, \dd y \right| \\
& \leqs C \lambda^{t d} \int_{ | x' - y' | \leqs \ep}  \sum_{n = 0}^k \lambda^{t n} \sup_{|x_j-y_j| \leqs \ep} |(\partial_j^n\psi)( \lambda^t (y-x) )| \\
& \qquad \qquad \qquad \qquad \qquad \times  | \lambda^{t(1-m) + s} \xi_j - \lambda^{- t m} \partial_j \left( \fy( \lambda^t y) \right) |^{n - 2k}
\lambda^{t m (n-2k)} \, \dd y' \\
& \leqs C_h \lambda^{t d} \ep^{- 2 k} \lambda^{- s k} \sum_{n = 0}^k   h^n n!^s \lambda^{s k + t (n - m k)} \\
& \leqs C_h \lambda^{t d} \ep^{- 2 k} \lambda^{- s k} \sum_{n = 0}^k   h^n n!^s \lambda^{t ( (m-1)k  + n - m k)}\\
& \leqs C_{s,h}  \lambda^{t d} \ep^{- 2 k} \lambda^{- s k} (2 h)^k  k!^s . 
\end{aligned}
\end{equation*}
As before this shows that $(x_0,\xi_0) \notin \WF^{t,s}(u)$ and \eqref{eq:chirpconclusion3} follows. 

When $d = 1$ and $\fy$ is even then \eqref{eq:chirpconclusion6} follows as in the proof of Theorem \ref{thm:chirpWFGS}. 
\end{proof}

%%%%%%%%%%%%%%%%%%%%%%%%%%%%%%%%%%%%%%%%%%%%%%%%%%%%%%%%%%%%
\section{Propagation of the $t,s$-Gelfand--Shilov wave front set for a particular evolution equation}\label{sec:schrodinger}
%%%%%%%%%%%%%%%%%%%%%%%%%%%%%%%%%%%%%%%%%%%%%%%%%%%%%%%%%%%%

In \cite[Remark~4.7]{Rodino3} we discuss the initial value Cauchy problem
for the evolution equation 
in dimension $d = 1$ 
\begin{equation*}
\left\{
\begin{array}{rl}
\partial_t u(t,x) + i D_x^{m} u (t,x) & = 0, \quad m \in \no \setminus 0, \quad x \in \ro, \quad t \in \ro, \\
u(0,\cdot) & = u_0.  
\end{array}
\right.
\end{equation*}
It is a generalization of the Schr\"odinger equation for the free particle where $m = 2$. 

Here we generalize this equation into
\begin{equation}\label{eq:schrodeq3}
\left\{
\begin{array}{rl}
\partial_t u(t,x) + i p(D_x) u (t,x) & = 0, \quad x \in \rr d, \quad t \in \ro, \\
u(0,\cdot) & = u_0
\end{array}
\right.
\end{equation}
where $p: \rr d \to \ro$ is a polynomial with real coefficients of order $m \geqs 2$, that is
\begin{equation}\label{eq:polynomial2}
p (\xi) = \sum_{|\alpha| \leqs m} c_\alpha \xi^\alpha, \quad c_\alpha \in \ro.  
\end{equation}
The principal part is 
\begin{equation}\label{eq:principalpart2}
p_m (\xi) = \sum_{|\alpha| = m} c_\alpha \xi^\alpha  
\end{equation}
and there exists $\alpha \in \nn d$ such that $|\alpha| = m$ and $c_\alpha \neq 0$. 

The Hamiltonian is $p(\xi)$, and the Hamiltonian flow of the principal part $p_m(\xi)$ is given by
\begin{equation}\label{eq:hamiltonflow}
( x (t),\xi(t) ) = \chi_t (x_0, \xi_0)
= (x_0 + t \nabla p_m (\xi_0), \xi_0), \quad t \in \ro, \quad (x_0, \xi_0) \in T^* \rr d \setminus 0.
\end{equation}

The explicit solution to \eqref{eq:schrodeq3} is 
\begin{equation*}
u (t,x) 
= e^{- i t p(D_x)} u_0 
= (2 \pi)^{- \frac{d}{2}} \int_{\rr d} e^{i \la x, \xi \ra - i t p(\xi)} \wh u_0 (\xi) \dd \xi
\end{equation*}
for $u_0 \in \cS(\rr d)$. 
Thus $u (t,x) = \cK_t u_0(x)$ where $\cK_t$ is the operator with Schwartz kernel
\begin{align*}
K_t (x,y) & = (2 \pi)^{-d} \int_{\rr d} e^{i \la x-y, \xi \ra - i t p(\xi)} \dd \xi \\
& =  (2 \pi)^{- \frac{d}{2}} \cF^{-1} (e^{- i t p}) (x-y)
\end{align*}
which may be considered an element in $\cS'(\rr {2d})$. 
Thus $\cK_t$ is a convolution operator with convolution kernel
\begin{equation}\label{eq:convolutionkernel}
k_t  =  (2 \pi)^{- \frac{d}{2}} \cF^{-1} (e^{- i t p}) \in \cS'(\rr d)
\end{equation}
and we may write
\begin{equation}\label{eq:schwartzkernel1}
K_t (x,y) 
=  \left( 1 \otimes k_t \right) \circ \kappa^{-1}(x,y)
\end{equation}
where $\kappa \in \rr {2d \times {2d}}$ is the matrix defined by $\kappa(x,y) = (x+\frac{y}{2}, x - \frac{y}{2})$ for $x,y \in \rr d$. 

Since $K_t \in \cS'(\rr {2d}) \subseteq (\Sigma_r^s)' (\rr {2d})$ if $r + s > 1$, the operator $\cK_t: \Sigma_r^s (\rr d) \to (\Sigma_r^s)' (\rr d)$ is continuous for all $t \in \ro$. 

The next result shows that $\cK_t$ acts continuously on $\Sigma_r^s (\rr d)$ if $r \geqs s(m-1) > 1$, 
and the $(s(m-1),s)$-Gelfand--Shilov wave front set of the solution propagates along the Hamiltonian flow of $p_m$, 
whereas the $(r,s)$--Gelfand--Shilov wave front set is invariant when $r > s(m-1)$.

\begin{thm}\label{thm:WFgskernel}
Suppose $m \geqs 2$ and let $p$ be defined 
by \eqref{eq:polynomial2}, \eqref{eq:principalpart2}, 
and denote by \eqref{eq:hamiltonflow} the Hamiltonian flow
of the principal part $p_m$. 
Let $r, s > 0$ satisfy $r \geqs s(m-1) > 1$,
suppose $\cK_t: \cS (\rr d) \to \cS' (\rr d)$ is the continuous linear operator with Schwartz kernel 
\eqref{eq:schwartzkernel1} where $k_t$ is defined by \eqref{eq:convolutionkernel}. 
Then $\cK_t: \Sigma_{r}^s (\rr d) \to \Sigma_{r}^s (\rr d)$ is continuous, extends uniquely to a 
continuous operator $\cK_t: (\Sigma_{r}^s)' (\rr d) \to (\Sigma_{r}^s)' (\rr d)$, 
and is invertible with inverse $\cK_{t}^{-1} = \cK_{-t}$.
For $t \in \ro$ we have
\begin{align}
& \WF^{s(m-1),s}  ( \cK_t u) = \chi_t  \left( \WF^{s(m-1),s}  (u) \right), \quad u \in (\Sigma_{s(m-1)}^s)' (\rr d), \label{eq:propagation1} \\
& \WF^{r,s}  ( \cK_t u) = \WF^{r,s}  (u), \quad u \in (\Sigma_{r}^s)' (\rr d), \quad r > s(m-1). \label{eq:propagation2} 
\end{align}
\end{thm}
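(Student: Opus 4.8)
The plan is to derive the whole statement from the general propagation theorem, Theorem~\ref{thm:WFGSpropagation}, applied to the operator $\cK_t$ with Schwartz kernel $K_t \in \cS'(\rr{2d}) \subseteq \left( \Sigma_r^s \right)'(\rr{2d})$ — note $r + s > 1$ since $r \geqs s(m-1) > 1$ — after computing $\WF^{r,s}(K_t)$. The key observation is that the convolution kernel $k_t$ in \eqref{eq:convolutionkernel} has Fourier transform $\widehat{k_t}$ equal to a constant times $e^{-i t p}$, which for $t \neq 0$ is a real-polynomial chirp of order $m$ with principal part $-t p_m$, so the results of Section~\ref{sec:chirp} apply to it. The case $t = 0$ is trivial ($\cK_0 = \mathrm{id}$, $\chi_0 = \mathrm{id}$), so I assume $t \neq 0$. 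I would also record at the outset that $\cK_t = e^{-i t p(D_x)}$ is a Fourier multiplier, hence $\cK_s \cK_t = \cK_{s+t}$ and $\cK_0 = \mathrm{id}$; once $\cK_t$ is known to act continuously on $\Sigma_r^s(\rr d)$ and on $\left( \Sigma_r^s \right)'(\rr d)$, this yields $\cK_t^{-1} = \cK_{-t}$ on both spaces.

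The central step is the computation of $\WF^{r,s}(K_t)$. Using the Fourier-transform rule $\WF^{a,b}(f) = \{ (x,\xi) : (\xi,-x) \in \WF^{b,a}(\widehat f) \}$, which follows directly from the identity $|V_\fy f(x,\xi)| = |V_{\widehat\fy}\widehat f(\xi,-x)|$ and Definition~\ref{def:wavefrontGFst}, together with Theorem~\ref{thm:chirpWFGS} applied to $e^{-itp}$ with first index $s$ — admissible since $s > \tfrac1{m-1}$ — I get, when $r = s(m-1)$,
\begin{equation*}
\WF^{s(m-1),s}(k_t) \subseteq \{ (t \nabla p_m(\xi), \xi) : \xi \in \rr d \setminus 0 \},
\end{equation*}
while for $r > s(m-1)$ Proposition~\ref{prop:chirpnegative1}, applied with index pair $(s,r)$, gives instead $\WF^{r,s}(k_t) \subseteq \{0\} \times (\rr d \setminus 0)$. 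To pass from $k_t$ to $K_t$ I use the factorization \eqref{eq:schwartzkernel1}, $K_t = (1 \otimes k_t) \circ \kappa^{-1}$, in combination with: the tensorization inclusion of Proposition~\ref{prop:tensorWFs}; the elementary identity $\WF^{r,s}(1) = (\rr d \setminus 0) \times \{0\}$; and the invariance of the anisotropic wave front set under the linear change of variables $\kappa$, whose cotangent lift $(z,\zeta) \mapsto (\kappa^{-1} z, \kappa^{T}\zeta)$ is block diagonal in the space/frequency splitting and therefore commutes with the anisotropic dilations. With $\kappa^{-1}(x,y) = (\tfrac{x+y}{2}, x-y)$ and $\kappa^{T}(\xi,\eta) = (\xi+\eta, \tfrac{\xi-\eta}{2})$, a short bookkeeping — the trivial tensor factor $1$ forces $\xi + \eta = 0$, and the surviving pair is $(x-y, \xi)$ — yields
\begin{equation*}
\WF^{s(m-1),s}(K_t) \subseteq \{ (x, y, \xi, -\xi) : x - y = t \nabla p_m(\xi) \} \setminus 0
\end{equation*}
when $r = s(m-1)$ (using $\nabla p_m(0) = 0$ to absorb the degenerate piece $(x-y,\xi) = 0$), and $\WF^{r,s}(K_t) \subseteq \{ (x, x, \xi, -\xi) \} \setminus 0$ when $r > s(m-1)$. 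In both cases $\WF_1^{r,s}(K_t) = \WF_2^{r,s}(K_t) = \emptyset$, i.e.\ \eqref{eq:WFKjempty} holds, so Theorem~\ref{thm:WFGSpropagation} gives that $\cK_t$ is continuous on $\Sigma_r^s(\rr d)$, extends uniquely to $\left( \Sigma_r^s \right)'(\rr d)$, and $\WF^{r,s}(\cK_t u) \subseteq \WF^{r,s}(K_t)' \circ \WF^{r,s}(u)$.

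It then remains to evaluate this relation composition. Unwinding the definition of $A' \circ B$ with the sets above, for $r = s(m-1)$ one has
\begin{equation*}
\WF^{s(m-1),s}(K_t)' \circ \WF^{s(m-1),s}(u) \subseteq \{ (x,\xi) : (x - t\nabla p_m(\xi), \xi) \in \WF^{s(m-1),s}(u) \} = \chi_t \big( \WF^{s(m-1),s}(u) \big),
\end{equation*}
the last equality because $\chi_t^{-1}(x,\xi) = (x - t\nabla p_m(\xi), \xi)$, and for $r > s(m-1)$ the same computation gives $\WF^{r,s}(K_t)' \circ \WF^{r,s}(u) \subseteq \WF^{r,s}(u)$. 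This establishes the inclusions $\subseteq$ in \eqref{eq:propagation1} and \eqref{eq:propagation2}. For the reverse inclusions I apply the inclusion just proved to $\cK_{-t}$ and to $v = \cK_t u$: since $\cK_{-t} \cK_t = \mathrm{id}$ and $\chi_{-t} = \chi_t^{-1}$, this gives $\WF^{s(m-1),s}(u) \subseteq \chi_{-t}\big( \WF^{s(m-1),s}(\cK_t u) \big)$, hence $\chi_t\big(\WF^{s(m-1),s}(u)\big) \subseteq \WF^{s(m-1),s}(\cK_t u)$, and likewise $\WF^{r,s}(u) \subseteq \WF^{r,s}(\cK_t u)$ when $r > s(m-1)$; together with $\cK_t^{-1} = \cK_{-t}$ this completes the proof.

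The step I expect to be the main obstacle is the transfer from $\WF(k_t)$ to $\WF(K_t)$: one has to set up carefully the change-of-variables invariance of $\WF^{r,s}$ under $\kappa$ in the anisotropic setting — the matrix $\kappa$ is not dilation-preserving, but its cotangent lift, being block diagonal, is, which is exactly what makes the invariance hold — and then run the combinatorics with the two reductions, namely the constraint $\xi + \eta = 0$ coming from the trivial tensor factor $1$ and the degenerate contribution $(x-y,\xi) = 0$, which must be checked not to spoil $\WF_1^{r,s}(K_t) = \WF_2^{r,s}(K_t) = \emptyset$ nor to add spurious points to the relation composition. Once Theorem~\ref{thm:WFGSpropagation} and the results of Section~\ref{sec:chirp} are available, the remaining arguments are routine.
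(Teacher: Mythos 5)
Your proposal is correct and follows essentially the same route as the paper: compute $\WF^{r,s}(k_t)$ from Theorem~\ref{thm:chirpWFGS} (resp.\ Proposition~\ref{prop:chirpnegative1}) via the Fourier-transform rule, transfer to $K_t$ through the tensorization result and the linear change of variables $\kappa$, verify \eqref{eq:WFKjempty}, and then invoke Theorem~\ref{thm:WFGSpropagation} together with $\cK_t^{-1}=\cK_{-t}$ and $\chi_{-t}=\chi_t^{-1}$ for the reverse inclusions. The only differences (re-deriving the Fourier rule from the STFT identity, making the group property $\cK_s\cK_t=\cK_{s+t}$ explicit, and running the $\kappa$-bookkeeping in the inverse direction) are cosmetic.
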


\begin{proof}
First we let $r = s (m-1) > 1$. 
By Theorem \ref{thm:chirpWFGS} we have 
\begin{equation*}
\WF^{s,r} ( e^{- i t p} ) 
\subseteq \{ (x, - t \nabla p_m(x) ) \in T^* \rr d: \ x \neq 0 \}
\end{equation*}
and from \eqref{eq:evensteven0} and \cite[Proposition~3.6 (i)]{Rodino3} we obtain
\begin{align*}
\WF^{r,s} ( k_t ) 
& = \WF^{r,s} ( \cF^{-1} e^{- i t p} ) 
= - \WF^{r,s} ( \cF e^{- i t p} ) \\
& = - \J \WF^{s,r} ( e^{- i t p} ) \\
& \subseteq \{ ( t \nabla p_m(x)  , x) \in T^* \rr d: \ x \neq 0 \}. 
\end{align*}

Now  \eqref{eq:schwartzkernel1}, \cite[Proposition~3.6 (ii)]{Rodino3}, 
Proposition \ref{prop:tensorWFs} and \cite[Proposition~7.1 (iii)]{Rodino3} yield
\begin{align*}
& \WF^{r,s} (K_t)
= \WF^{r,s} ( \left( 1 \otimes k_t \right) \circ \kappa^{-1} ) \\
& = 
\left( 
\begin{array}{cc}
  \kappa & 0 \\
  0 & \kappa^{-T}
  \end{array}
\right) 
\WF^{r,s} \left( 1 \otimes k_t \right) \\
& \subseteq \{ ( \kappa( x_1, x_2), \kappa^{-T}(\xi_1, \xi_2) ) \in T^* \rr {2d}: \\
& \qquad \qquad \qquad \qquad (x_1, \xi_1) \in \WF^{r,s} (1) \cup \{ 0 \},  
\ (x_2, \xi_2) \in \WF^{r,s} (k_t) \cup \{ 0 \} \} \setminus 0 \\
& \subseteq \{ ( \kappa( x_1, t \nabla p_m(x_2) ), \kappa^{-T}(0, x_2) \in T^* \rr {2d}: \ x_1, x_2 \in \rr d \} \setminus 0 \\
& = \left\{ \left( x_1 + t \frac12 \nabla p_m(x_2), x_1 - t \frac12 \nabla p_m(x_2), x_2, - x_2 \right) \in T^* \rr {2d}: \ x_1, x_2 \in \rr d  \right\} \setminus 0 \\
& = \left\{ \left( x_1 + t \nabla p_m(x_2), x_1, x_2, - x_2 \right) \in T^* \rr {2d}: \ x_1, x_2 \in \rr d \right\} \setminus 0. 
\end{align*}

Since $m \geqs 2$ we have $\nabla p_m(0) = 0$ and 
$\WF_{1}^{r,s}(K_t) = \WF_{2}^{r,s}(K_t) = \emptyset$ follows. 
Thus we may apply Theorem \ref{thm:WFGSpropagation}.  
This gives the continuity statements on $\Sigma_{r}^s (\rr d)$ and on $(\Sigma_{r}^s)' (\rr d)$. 
It also follows that $\cK_t$ is invertible with inverse $\cK_{t}^{-1} = \cK_{-t}$
on $\Sigma_{r}^s (\rr d)$ as well as on $(\Sigma_{r}^s)' (\rr d)$. 
Moreover Theorem \ref{thm:WFGSpropagation} gives for $u \in \left( \Sigma_r^s \right)' (\rr d)$
\begin{align*}
\WF^{r,s}( \cK_t u) 
& \subseteq \WF^{r,s} (K_t)' \circ \WF^{r,s} (u) \\
& = \{ (x,\xi) \in T^* \rr d: \ \exists (y,\eta) \in \WF^{r,s} (u), \ (x,y, \xi, - \eta) \in \WF^{r,s} (K_t) \} \\
& \subseteq \{ ( x_1 + t \nabla p_m(x_2) , x_2): \ (x_1,x_2) \in \WF^{r,s} (u) \} \\
& = \chi_t  \left( \WF^{r,s} (u) \right).
\end{align*}

The opposite inclusion follows from $\cK_{t}^{-1} = \cK_{-t}$, 
\begin{equation*}
\WF^{r,s} ( u) 
= \WF^{r,s} ( \cK_{-t}  \cK_t u) \\
\subseteq \chi_{-t}  \left( \WF^{r,s} ( \cK_t u) \right)
\end{equation*}
and $\chi_{-t} = \chi_t^{-1}$. 
We have proved the result when $r = s(m-1)$ and \eqref{eq:propagation1}.

It remains to consider the case $r > s(m-1) > 1$. 
By Proposition \ref{prop:chirpnegative1} we have 
\begin{equation*}
\WF^{s,r} ( e^{- i t p} ) 
\subseteq ( \rr d \setminus 0 ) \times \{ 0 \}
\end{equation*}
and from \eqref{eq:evensteven0} and \cite[Proposition~3.6 (i)]{Rodino3} we obtain
\begin{equation*}
\WF^{r,s} ( k_t ) 
= - \J \WF^{s,r} ( e^{- i t p} ) \\
\subseteq \{ 0 \} \times ( \rr d \setminus 0 ).  
\end{equation*}

Again \eqref{eq:schwartzkernel1}, \cite[Proposition~3.6 (ii)]{Rodino3}, 
Proposition \ref{prop:tensorWFs} and \cite[Proposition~7.1 (iii)]{Rodino3} yield
\begin{align*}
\WF^{r,s} (K_t)
& \subseteq \{ ( \kappa( x_1, x_2), \kappa^{-T}(\xi_1, \xi_2) ) \in T^* \rr {2d}: \\
& \qquad \qquad \qquad \qquad (x_1, \xi_1) \in \WF^{r,s} (1) \cup \{ 0 \},  
\ (x_2, \xi_2) \in \WF^{r,s} (k_t) \cup \{ 0 \} \} \setminus 0 \\
& \subseteq \{ ( \kappa( x_1, 0 ), \kappa^{-T}(0, x_2) \in T^* \rr {2d}: \ x_1, x_2 \in \rr d \} \setminus 0 \\
& = \left\{ \left( x_1 , x_1, x_2, - x_2 \right) \in T^* \rr {2d}: \ x_1, x_2 \in \rr d \right\} \setminus 0. 
\end{align*}

Again we have $\WF_{1}^{r,s}(K_t) = \WF_{2}^{r,s}(K_t) = \emptyset$, 
and Theorem \ref{thm:WFGSpropagation} 
gives the continuity statements on $\Sigma_{r}^s (\rr d)$ and on $(\Sigma_{r}^s)' (\rr d)$. 
Again $\cK_t$ is invertible with inverse $\cK_{t}^{-1} = \cK_{-t}$
on $\Sigma_{r}^s (\rr d)$ as well as on $(\Sigma_{r}^s)' (\rr d)$. 
Now Theorem \ref{thm:WFGSpropagation} gives $u \in \left( \Sigma_r^s \right)' (\rr d)$
\begin{equation*}
\WF^{r,s}( \cK_t u) 
\subseteq \WF^{r,s} (K_t)' \circ \WF^{r,s} (u) 
\subseteq \WF^{r,s} (u).
\end{equation*}

The opposite inclusion again follows from $\cK_{t}^{-1} = \cK_{-t}$ and $\chi_{-t} = \chi_t^{-1}$. 
We have proved the result when $r > s(m-1) > 1$ and \eqref{eq:propagation2}.
\end{proof}

\begin{rem}\label{rem:continuityWFK1}
In the proof of Theorem \ref{thm:WFgskernel}
the continuity of $\cK_t: \Sigma_{r}^s (\rr d) \to \Sigma_{r}^s (\rr d)$
when $r \geqs s(m-1) > 1$
is proved by means of the observation 
$\WF_{1}^{r,s}(K_t) = \WF_{2}^{r,s}(K_t) = \emptyset$
and Proposition \ref{prop:opSTFTformula}. 
It seems much more complicated to try to show this using seminorms 
on $\cK_t u$ for $u \in \Sigma_{r}^s (\rr d)$. 
\end{rem}

\begin{rem}\label{rem:continuityWFK2}
If $s(m-1) > r > 1$ and $p_m (x) \neq 0$ for all $x \in \rr d \setminus 0$
then Proposition \ref{prop:chirpnegative2} gives
\begin{equation*}
\WF^{s,r} ( e^{- i t p} ) 
\subseteq \{ 0 \} \times ( \rr d \setminus 0 ) 
\end{equation*}
so \eqref{eq:evensteven0} and \cite[Proposition~3.6 (i)]{Rodino3} give
\begin{equation*}
\WF^{r,s} ( k_t ) 
= - \J \WF^{s,r} ( e^{- i t p} ) \\
\subseteq ( \rr d \setminus 0 ) \times \{ 0 \}.  
\end{equation*}

Again \eqref{eq:schwartzkernel1}, \cite[Proposition~3.6 (ii)]{Rodino3}, 
Proposition \ref{prop:tensorWFs} and \cite[Proposition~7.1 (iii)]{Rodino3} yield
\begin{align*}
& \WF^{r,s} (K_t)
\subseteq \{ ( \kappa( x_1, x_2), \kappa^{-T}(\xi_1, \xi_2) ) \in T^* \rr {2d}: \\
& \qquad \qquad \qquad \qquad (x_1, \xi_1) \in \WF^{r,s} (1) \cup \{ 0 \},  
\ (x_2, \xi_2) \in \WF^{r,s} (k_t) \cup \{ 0 \} \} \setminus 0 \\
& \subseteq \{ ( \kappa( x_1, x_2 ), \kappa^{-T}(0,0) \in T^* \rr {2d}: \ x_1, x_2 \in \rr d \} \setminus 0 \\
& = ( \rr {2d} \setminus 0 ) \times \{ 0 \}. 
\end{align*}
In this case we cannot conclude that $\WF_{1}^{r,s}(K_t)$ and $\WF_{2}^{r,s}(K_t)$ are empty.

Thus we cannot conclude any continuity statement from Theorem \ref{thm:WFGSpropagation}. 
It is an open problem to prove or disprove continuity of $\cK_t$ on $\Sigma_{r}^s (\rr d)$
when $s(m-1) > r > 1$. 
Likewise continuity on $\Sigma_{r}^s (\rr d)$ is not known when $r + s > 1$ and $r \leqs 1$, 
nor when $r + s > 1$ and $s(m-1) \leqs 1$. 
\end{rem}

\section*{Acknowledgment}
Work partially supported by the MIUR project ``Dipartimenti di Eccellenza 2018-2022'' (CUP E11G18000350001).

%%%%%%%%%%%%%%%%%%%%%%%%%%%%%

\end{document}